\documentclass[11pt]{article}
\usepackage{latexsym,amsfonts,amssymb,amsmath,amsthm}
\usepackage{graphicx}

\usepackage[usenames,dvipsnames]{color}
\usepackage{ulem}

\usepackage{color}

\parindent 0.5cm
\evensidemargin 0cm \oddsidemargin 0cm \topmargin 0cm \textheight 22cm \textwidth 16.2cm \footskip 2cm \headsep
0cm

\begin{document}

\newtheorem{tm}{Theorem}[section]
\newtheorem{prop}[tm]{Proposition}
\newtheorem{defin}[tm]{Definition}
\newtheorem{coro}[tm]{Corollary}
\newtheorem{lem}[tm]{Lemma}
\newtheorem{assumption}[tm]{Assumption}
\newtheorem{rk}[tm]{Remark}
\newtheorem{nota}[tm]{Notation}
\numberwithin{equation}{section}

\newcommand{\stk}[2]{\stackrel{#1}{#2}}
\newcommand{\dwn}[1]{{\scriptstyle #1}\downarrow}
\newcommand{\upa}[1]{{\scriptstyle #1}\uparrow}
\newcommand{\nea}[1]{{\scriptstyle #1}\nearrow}
\newcommand{\sea}[1]{\searrow {\scriptstyle #1}}
\newcommand{\csti}[3]{(#1+1) (#2)^{1/ (#1+1)} (#1)^{- #1
 / (#1+1)} (#3)^{ #1 / (#1 +1)}}
\newcommand{\RR}[1]{\mathbb{#1}}

\newcommand{\rd}{{\mathbb R^d}}
\newcommand{\ep}{\varepsilon}
\newcommand{\rr}{{\mathbb R}}
\newcommand{\alert}[1]{\fbox{#1}}
\newcommand{\eqd}{\sim}
\def\p{\partial}
\def\R{{\mathbb R}}
\def\N{{\mathbb N}}
\def\Q{{\mathbb Q}}
\def\C{{\mathbb C}}
\def\l{{\langle}}
\def\r{\rangle}
\def\t{\tau}
\def\k{\kappa}
\def\a{\alpha}
\def\la{\lambda}
\def\De{\Delta}
\def\de{\delta}
\def\ga{\gamma}
\def\Ga{\Gamma}
\def\ep{\varepsilon}
\def\eps{\varepsilon}
\def\si{\sigma}
\def\Re {{\rm Re}\,}
\def\Im {{\rm Im}\,}
\def\E{{\mathbb E}}
\def\P{{\mathbb P}}
\def\Z{{\mathbb Z}}
\def\D{{\mathbb D}}
\newcommand{\ceil}[1]{\lceil{#1}\rceil}

\title{Persistence and convergence in  parabolic-parabolic chemotaxis system with  logistic source on $\mathbb{R}^{N}$}

\author{
Wenxian Shen and Shuwen Xue \\
Department of Mathematics and Statistics\\
Auburn University, Auburn, AL 36849,
U.S.A. }

\date{}
\maketitle

\begin{abstract}
In the current paper, we consider the following parabolic-parabolic chemotaxis system with logistic source on $\mathbb{R}^{N}$,
\begin{equation}\label{abstract-eq}
\begin{cases}
u_{t}=\Delta u - \chi\nabla\cdot  ( u\nabla v) + u(a-bu),\quad  x\in\R^N,\,\,\, t>0 \\
{v_t}=\Delta v -\lambda v+\mu u,\quad x\in\R^N,\,\,\, t>0.
\end{cases}
\end{equation}
where $\chi, \ a,\  b,\ \lambda,\ \mu$ are positive constants and $N$ is a positive integer.  We investigate the persistence and convergence in \eqref{abstract-eq}. To this end, we
first prove, under the assumption  $b>\frac{N\chi\mu}{4}$, the  global existence of a unique classical solution $(u(x,t;u_0, v_0),v(x,t;u_0, v_0))$ of \eqref{abstract-eq} with $u(x,0;u_0, v_0)=u_0(x)$ and $v(x,0;u_0, v_0)=v_0(x)$ for every nonnegative, bounded, and uniformly continuous function $u_0(x)$, and every nonnegative, bounded, uniformly continuous, and differentiable function $v_0(x)$. Next, under the same assumption $b>\frac{N\chi\mu}{4}$, we show that persistence phenomena occurs, that is, any globally defined bounded positive classical solution with strictly positive initial function $u_0$ is bounded below by a positive constant independent of $(u_0, v_0)$ when time is large.  Finally, we discuss the asymptotic behavior of the global classical solution with strictly positive initial function $u_0$. We show that there is $K=K(a,\lambda,N)>\frac{N}{4}$ such that if $b>K \chi\mu$ and $\lambda\geq \frac{a}{2}$, then for every strictly positive initial function $u_0(\cdot)$, it holds that
$$\lim_{t\to\infty}\big[\|u(x,t;u_0, v_0)-\frac{a}{b}\|_{\infty}+\|v(x,t;u_0, v_0)-\frac{\mu}{\lambda}\frac{a}{b}\|_{\infty}\big]=0.$$
\end{abstract}

\medskip
\noindent{\bf Key words.} Parabolic-parabolic chemotaxis system, logistic source, classical solution, global existence, persistence, asymptotic behavior.

\medskip
\noindent {\bf 2010 Mathematics Subject Classification.} 35A01, 35B35, 35B40, 35Q92, 92C17.

\section{Introduction and the Statements of Main results}

The current paper is devoted to the study of the asymptotic dynamics of the following parabolic-parabolic chemotaxis model with logistic source on $\R^N$:
\begin{equation}\label{Main-Eq}
\begin{cases}
u_{t}=\Delta u - \chi\nabla\cdot  (u\nabla v) + u(a-bu),\quad  x\in\R^N, \\
{v_t}=\Delta v -\lambda v+\mu u,\quad x\in\R^N,
\end{cases}
\end{equation}
%\textbf{(W.S. changed $\tau$ to $1$. make proper modifications due to this change, put the reviews on the case $\tau\not =1$ in the subsection of remarks)}
where $u(x, t)$and $v(x, t)$ denote the population densities of some mobile species and chemical
substance, respectively; $\chi>0$ represents the chemotactic sensitivity effect on the mobile species; %$\tau>0$ is a positive constant related to the diffusion rate of the chemical substance;
 $\lambda>0$ represents the degradation rate of the chemical substance and $\mu$ is the rate at which the mobile species produces the chemical substance. The reaction term $u(a-bu)$ in the first equation of \eqref{Main-Eq} is referred to as logistic source describing the local dynamics of the mobile species.

The origin of  chemotaxis models was introduced by Keller and Segel  at the beginning of 1970s in their works \cite{KeSe1}, \cite{KeSe2} to describe the aggregation of the slime mold {\it Dyctyostelium discoideum}. Chemotaxis describes the oriented movements of biological cells and organisms in response to chemical gradient which they may produce by themselves over time and is crucial for many aspects of behaviour such as the location of food sources, avoidance of predators and
attracting mates, slime mold aggregation, tumor angiogenesis, embryo development, primitive streak formation and etc.(see \cite{KJPainter}). A lot of literature is concerned with mathematical analysis of various chemotaxis models. The reader is referred to \cite{BBTW, HiPa, Hor} and the references therein for some detailed introduction into the mathematics of chemotaxis models.

Consider chemotaxis models. Central problems include the existence  of non-negative classical/weak solutions which are globally defined in time or blow up at a finite time and  the asymptotic  behavior of globally defined solutions such as persistence and convergence as time goes to infinity; etc. Considerable progress has been made in  the analysis of various chemotaxis models towards these central problems on both bounded and unbounded domains. For example, consider the following chemotaxis model on bounded domain with Neumann boundary condition
\begin{equation}\label{IntroEq-0}
\begin{cases}
u_{t}=\Delta u-\chi\nabla\cdot (u\nabla v) + f(u,v),\quad   x\in\Omega, \\
v_t=\Delta v -v+u,\quad  x\in\Omega,\\
\frac{\p u}{\p n}=\frac{\p v}{\p n}=0,\quad x\in \p \Omega.
\end{cases}
\end{equation}
Chemotaxis model \eqref{IntroEq-0} is the so-called minimal model when $f\equiv 0$. It is known that finite time blow up may occur for the minimal model. For example, when $\Omega$ is a ball in $\R^N$ with $N\geq 3$, then for all $M>0$ there exists positive initial data $(u_0,v_0)\in C(\bar\Omega)\times W^{1,\infty}(\Omega)$ with $\int_{\Omega}u_0=M$ such that the corresponding solution blows up in finite time
(see \cite{win_JMPA}).
 When $\Omega$ is a convex bounded domain with smooth boundary, $f(u,v)=u(a-bu)$ where $a$ and $b$ are positive constants,  and $\frac{b}{\chi}$ is sufficiently large,  it is shown in \cite{win_JDE2014} that  unique global classical solution of \eqref{IntroEq-0} exists for  every positive initial data $(u_{0},v_0)\in C^{0}(\overline{\Omega})\times W^{1,\infty}(\Omega)$ and that the constant solution $(\frac{a}{b},\frac{a}{b})$ is asymptotically stable in the sense that
 \[
 \lim_{t\to\infty} \big[ \|u(\cdot,t;u_0,v_0)-\frac{a}{b}\|_{L^\infty(\Omega)}+\|v(\cdot,t;u_0,v_0)-\frac{a}{b}\|_{L^\infty(\Omega)}\big]=0.
 \]
However, when $b$ is not large relative to $\chi$,  numerical evidence shows that even in the spatially one-dimensional setting solutions may exhibit chaotic behavior  (see \cite{PaHi}).  Also a phenomenon suggested by the numerical simulations
in \cite{PaHi} consists in the ability of \eqref{IntroEq-0} to enforce asymptotic smallness of the cell population
density, undistinguishable from extinction, in large spatial regions (see e.g. Fig. 7(d) in \cite{PaHi}).   In \cite{TaWi},  the authors
proved that any such extinction phenomenon must be localized in space, and that the population
as a whole always persists, which is called persistence of mass in \cite{TaWi}. 
Recently, Issa and Shen \cite{IsSh}  proved  the pointwise persistence phenomena, that is, any globally defined positive solution is bounded below by a positive constant independent of its initial data,
 which implies that the
cell population may become very small at some time and some location, but it persists at any
location eventually.  For other related works on \eqref{IntroEq-0}, we refer the readers to \cite{HorWan, lankeit_eventual,  LiMu,   NAGAI_SENBA_YOSHIDA, OTYM2002,OsYa, win_jde}  %\textbf{(W.S. make sure these papers deal with paraboic-parabolic models on bounded domains)}
and references therein.

When the second equation of \eqref{IntroEq-0} is replaced by $0=\Delta v -v+u,\, x\in\Omega$, it becomes
\begin{equation}\label{IntroEq-0-0}
\begin{cases}
u_{t}=\Delta u-\chi\nabla\cdot (u\nabla v) + f(u,v),\quad   x\in\Omega, \\
0=\Delta v -v+u,\quad  x\in\Omega,\\
\frac{\p u}{\p n}=\frac{\p v}{\p n}=0,\quad x\in \p \Omega.
\end{cases}
\end{equation}
 \eqref{IntroEq-0-0} is referred to as the parabolic-elliptic chemotaxis model which models the situation in which the chemical substance diffuses very quickly. Global existence, finite blow up and asymptotic behavior of parabolic-elliptic chemotaxis model has also been studied in many papers. For example, when $f(u,v)=u(a-bu)$ where $a$ and $b$ are positive constants, it is proved in \cite{TeWi} that unique bounded global classical solution $(u(x,t;u_0),v(x,t;u_0))$ with
 $u(x,0;u_0)=u_0(x)$ exists for any sufficiently smooth positive initial function $u_0$, under the assumption that either $N\le 2$ or $b>\frac{N-2}{N}\chi$,  and that in presence of certain  sufficiently strong
logistic damping, the constant solution $(\frac{a}{b}, \frac{a}{b})$ is asymptotically stable.
For other studies of parabolic-elliptic chemotaxis models on bounded domains, we refer the readers to \cite{DiNa, GaSaTe, lankeit_exceed, WaMuZh, Win, win_JMAA_veryweak, win_JNLS, YoYo, ZhMuHuTi} %\textbf{(W.S. make sure these papers deal with parabolic-elliptic models)}
and the references therein.

{There are also several studies of  chemotaxis models on the whole space.
  For example, consider \eqref{Main-Eq} when $f(u,v)=0$.}  It is possible for a non-negative solution  in $\R^N$ ($N\geq 2$) to blow up in finite time (see  \cite{CsPj}). It was shown in \cite{NtSrUm} that the unique solution exists globally in time and bounded under some conditions for initial data. Moreover, every bounded solution decays to $0$ as $t\to\infty$ and behaves like the heat kernel with the self-similarity (see \cite{NtYt} for the asymptotic profiles of bounded solution in the case $N=1$).

\smallskip

{When the second equation in \eqref{Main-Eq}  being replaced by $0=\Delta v -\lambda v+\mu u$,}
it becomes
\begin{equation}\label{Main-Eq-0}
\begin{cases}
u_{t}=\Delta u - \chi\nabla\cdot  (u\nabla v) + u(a-bu),\quad  x\in\R^N, \\
0=\Delta v -\lambda v+\mu u,\quad x\in\R^N.
\end{cases}
\end{equation}
 Some studies of \eqref{Main-Eq-0} are also carried out.
For example, in the  case of $a=b=0$,  it is known that blow-up occurs if
either N=2 and the total initial population mass is large enough, or $N\ge 3$ (see  \cite{BBTW}, \cite{DiNaRa} and  references therein).  When $a$ and $b$ are positive constants, it is shown for the case $\lambda=\mu=1$ in \cite{SaSh1} that if $b>\chi$, then there exists a unique bounded global classical solution for any nonnegative initial $u_0\in C_{\rm unif}^b(\R^N)$,  and that if $b>2\chi$, then for any strictly positive initial $\inf_{x\in\R^N}u_0(x)>0$, the unique global classical solution $(u(x, t;u_0), v(x, t;u_0))$ with $u(x,0; u_0)=u_0(x)$ converges to constant solution $(\frac{a}{b}, \frac{a}{b})$ as time goes to infinity. For the persistence of globally defined classical solution with strictly positive initial function, we refer the readers to \cite{SaSh3}.
For the asymptotic behavior of nonempty compact supported initials and front like initials on the whole space $\R^N$, we refer the readers to  \cite{SaSh2} and \cite{SaShXu}.

There is not much study of the asymptotic dynamics in \eqref{Main-Eq}. 
The objective of the current paper is to investigate the persistence of global classical solution of \eqref{Main-Eq} with strictly positive initial data and the asymptotic behavior of global classical solution of \eqref{Main-Eq} with strictly positive initial data.

In the rest of this introduction, we introduce the notations and definitions, and state the main results of this paper.

\subsection{Notations and statements of the main results}

In order to state our main results, we first introduce some notations and definitions.
%For every $x=(x_1,x_2,\cdots,x_N)\in\R^N$, let $|x|_{\infty}=\max\{|x_i|, \ |\ i=1,\cdots,N\}$ and $|x|=\sqrt{|x_1|^2+\cdots+|x_N|^2}$.
Let
\begin{equation*}
\label{unif-cont-space}
C_{\rm unif}^b(\R^N)=\{u\in C(\R^N)\,|\, u(x)\,\,\text{is uniformly continuous in}\,\, x\in\R^N\,\, {\rm and}\,\, \sup_{x\in\R^N}|u(x)|<\infty\}
\end{equation*}
equipped with the norm $\|u\|_\infty=\sup_{x\in\R^N}|u(x)|$, and
$$
C_{\rm unif}^{b,1}=\{u\in C_{\rm unif}^b(\R^N)\,|\, \p_{x_i}u\in C_{\rm unif}^b(\R^N),\,\, i=1,2,\cdots, N\}
$$
equipped with the norm $\|u\|_{C_{\rm unif}^{b, 1}}=\|u\|_\infty+\sum_{i=1}^N \|\p_{x_i}u\|_\infty$ and
$$
C_{\rm unif}^{b,2}=\{u\in C_{\rm unif}^{b, 1}(\R^N)\,|\, \p_{x_i x_j}u\in C_{\rm unif}^b(\R^N),\,\, i,j=1,2,\cdots, N\}.
$$
For given $0<\nu<1$, let
\begin{equation}
\label{holder-cont-space}
C^{b,\nu}_{\rm unif}(\R^N)=\{u\in C_{\rm unif}^b(\R^N)\,|\, \sup_{x,y\in\R^N,x\not =y}\frac{|u(x)-u(y)|}{|x-y|^\nu}<\infty\}
\end{equation}
with the norm $\|u\|_{\infty,\nu}=\sup_{x\in\R^N}|u(x)|+\sup_{x,y\in\R^N,x\not =y}\frac{|u(x)-u(y)|}{|x-y|^\nu}$. Hence $C^{b, 0}_{\rm unif}(\R^N)=C^{b}_{\rm unif}(\R^N)$.
For $0<\theta<1$, let
\begin{align*}
&C^{\theta}((t_1,t_2),C_{\rm unif}^{b,\nu}(\R^N))\\
&=\{ u(\cdot)\in C((t_1,t_2),C_{\rm unif}^{b,\nu}(\R^N))\,|\, u(t)\,\, \text{is locally H\"{o}lder continuous with exponent}\,\, \theta\}.
\end{align*}

We call $(u(x,t),v(x,t))$ a {\it  classical solution} of \eqref{Main-Eq} on
$ [0,T)$ if  {$ u,v \in C(\R^N\times [0,T))\cap  C^{2,1}(\R^N\times (0,T))$} and satisfies \eqref{Main-Eq} for
$(x,t)\in\R^N\times (0,T)$ in the classical sense. A classical solution $(u(x,t),v(x,t))$  of \eqref{Main-Eq} on
$ [0,T)$ is called {\it non-negative} if $u(x,t)\ge 0$ and $v(x,t)\ge 0$ for all $(x,t)\in\R^N\times [0,T)$.  A {\it global classical solution}   of \eqref{Main-Eq} is a classical solution  on
$ [0,\infty)$. Note that, due to biological interpretations, only non-negative classical solutions will be of interest.

\smallskip

The first theorem is on the local existence of a unique classical solution with initial function $(u_0,v_0)\in C_{\rm unif}^{b}(\R^N)\times C_{\rm unif}^{b,1}(\R^N)$.

\begin{tm} \label{Local-thm1}
For any $u_0 \in C_{\rm unif}^{b}(\R^N)$, $v_0 \in C_{\rm unif}^{b,1}(\R^N)$ with  $u_0 \geq 0$, $v_0 \geq 0$, there exists $T_{\max}:=T_{\max}(u_0, v_0) \in (0,\infty]$  such that \eqref{Main-Eq}  has a unique non-negative classical solution $(u(x,t;u_0, v_0)$, $v(x,t;u_0, v_0))$ on $[0,T_{\max})$ satisfying that $\lim_{t\to 0^{+}}u(\cdot,t;u_0, v_0)=u_0$ in the
$C_{\rm unif}^b(\R^N)$-norm and
$\lim_{t\to 0^{+}}v(\cdot,t;u_0, v_0)=v_0$ in the
$C_{\rm unif}^{b,1}(\R^N)$-norm,
\begin{equation}
\label{local-1-eq1}
u(\cdot,\cdot;u_0, v_0) \in C([0, T_{\max} ), C_{\rm unif}^b(\R^N) )\cap C^1((0,T_{\max}),C_{\rm unif}^b(\R^N)),
\end{equation}
\begin{equation}
\label{local-1-eq1-1}
v(\cdot,\cdot;u_0, v_0) \in C([0, T_{\max} ), C_{\rm unif}^{b,1}(\R^N) )\cap C^1((0,T_{\max}),C_{\rm unif}^{b,1}(\R^N)),
\end{equation}
\begin{equation}
\label{local-1-eq2}
u(\cdot,\cdot;u_0, v_0),\, \partial_{x_i} u(\cdot,\cdot;u_0, v_0),\, \partial^2_{x_i x_j} u(\cdot,\cdot;u_0, v_0),\, \partial_t u(\cdot,\cdot;u_0, v_0)\in C^\theta((0,T_{\max}),C^{b,\nu}_{\rm unif}(\R^N))
\end{equation}
\begin{equation}
\label{local-1-eq2-1}
v(\cdot,\cdot;u_0, v_0),\,\, \partial_{x_i} v(\cdot,\cdot;u_0, v_0),\,\, \partial^2_{x_i x_j} v(\cdot,\cdot;u_0, v_0),\,\, \partial_t v(\cdot,\cdot;u_0, v_0)\in C^\theta((0,T_{\max}),C^{b,\nu}_{\rm unif}(\R^N))
\end{equation}
for all $i,j=1,2,\cdots,N$, $0<\theta\ll 1$, and  $0<\nu\ll 1$.
Moreover, if $T_{\max}< \infty,$ then
$\lim_{t \to T_{\max}} \big(  \left\| u(\cdot,t;u_0, v_0) \right\|_{\infty}+ \| v(\cdot,t;u_0, v_0) \|_{C_{\rm unif}^{b,1}(\R^N)} \big)=\infty.$
\end{tm}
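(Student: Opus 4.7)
The plan is to recast \eqref{Main-Eq} as a mild problem via the heat semigroup and run a contraction-mapping argument in the product space $C([0,T],C_{\rm unif}^{b}(\R^N))\times C([0,T],C_{\rm unif}^{b,1}(\R^N))$, then bootstrap to full classical regularity using parabolic Schauder theory. Writing $\{e^{t\Delta}\}_{t\ge 0}$ for the heat semigroup on $C_{\rm unif}^{b}(\R^N)$, the system is equivalent to
\begin{equation*}
u(t)=e^{t\Delta}u_0-\chi\int_0^t e^{(t-s)\Delta}\nabla\!\cdot\!(u(s)\nabla v(s))\,ds+\int_0^t e^{(t-s)\Delta}u(s)(a-bu(s))\,ds,
\end{equation*}
\begin{equation*}
v(t)=e^{-\lambda t}e^{t\Delta}v_0+\mu\int_0^t e^{-\lambda(t-s)}e^{(t-s)\Delta}u(s)\,ds.
\end{equation*}
The key semigroup bounds are $\|e^{t\Delta}f\|_\infty\le\|f\|_\infty$ and $\|\nabla e^{t\Delta}f\|_\infty\le C t^{-1/2}\|f\|_\infty$, together with $\nabla e^{t\Delta}=e^{t\Delta}\nabla$ on $C_{\rm unif}^{b,1}$; these are the only analytic inputs needed.

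First I would fix $R>\|u_0\|_\infty+\|v_0\|_{C_{\rm unif}^{b,1}}+1$ and consider the closed ball
\begin{equation*}
\mathcal{B}_{T,R}=\bigl\{(u,v)\in C([0,T],C_{\rm unif}^{b})\times C([0,T],C_{\rm unif}^{b,1}):\ \|u(t)\|_\infty\le R,\ \|v(t)\|_{C_{\rm unif}^{b,1}}\le R\bigr\}
\end{equation*}
and define $\Phi(u,v)=(\tilde u,\tilde v)$ by the right-hand sides above. To estimate $\tilde u$, I control the chemotaxis term by
$\|\nabla\!\cdot\!e^{(t-s)\Delta}(u\nabla v)\|_\infty\le C(t-s)^{-1/2}\|u\|_\infty\|\nabla v\|_\infty$, whose time integral is $O(T^{1/2})$. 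For $\tilde v$, the gradient is estimated using $\nabla e^{t\Delta}v_0=e^{t\Delta}\nabla v_0$ for the homogeneous part and $\|\nabla e^{(t-s)\Delta}u(s)\|_\infty\le C(t-s)^{-1/2}\|u(s)\|_\infty$ for the Duhamel part, again integrable. Choosing $T=T(R,\chi,\lambda,\mu,a,b)$ sufficiently small makes $\Phi$ map $\mathcal{B}_{T,R}$ to itself and contract with respect to the natural product norm; Banach's fixed point theorem then yields a unique mild solution on $[0,T]$, and non-negativity follows from the parabolic maximum principle applied to each equation (using a standard truncation since $\nabla v$ is only bounded, not smooth, at this stage).

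Once a mild solution exists, to upgrade it to a classical one I would first observe that the $v$-equation is a linear inhomogeneous heat equation with bounded source $\mu u\in C([0,T],C_{\rm unif}^{b})$, so standard analytic semigroup theory places $v(\cdot,t)$ in $C_{\rm unif}^{b,2}$ for $t>0$ and gives $\nabla v\in C^\theta((0,T),C_{\rm unif}^{b,\nu})$ for some small $\theta,\nu>0$. Treating the chemotaxis term $-\chi\nabla\!\cdot\!(u\nabla v)=-\chi(\nabla u\cdot\nabla v+u\Delta v)$ as a linear forcing with Hölder coefficients, parabolic Schauder estimates (or equivalently the $C^\nu$-analyticity of $\Delta$ on $C_{\rm unif}^{b,\nu}$) promote $u$ to $C^{2+\nu,1+\theta}$ on compact subsets of $(0,T)\times\R^N$, and a second iteration does the same for $v$. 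This gives the Hölder regularity listed in \eqref{local-1-eq2}–\eqref{local-1-eq2-1} and shows the pair is a genuine classical solution. Uniqueness is inherited from the fixed-point construction on small intervals and then propagated by a standard Gronwall argument on the difference of two solutions.

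Finally, I would define $T_{\max}$ as the supremum of times on which such a solution exists and establish the blow-up alternative by contradiction: if $T_{\max}<\infty$ but $\limsup_{t\uparrow T_{\max}}(\|u(\cdot,t)\|_\infty+\|v(\cdot,t)\|_{C_{\rm unif}^{b,1}})<\infty$, then along some sequence $t_n\uparrow T_{\max}$ the norm stays bounded by some $M$, so the local existence result applied at time $t_n$ with data $(u(\cdot,t_n),v(\cdot,t_n))$ produces a solution on $[t_n,t_n+\tau]$ for a $\tau=\tau(M)>0$ independent of $n$; picking $t_n$ with $T_{\max}-t_n<\tau/2$ extends the solution past $T_{\max}$, contradicting maximality. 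The principal technical obstacle throughout is handling the chemotaxis divergence term in a space that only controls one derivative of $v$: the reason this still closes is the regularization $\|\nabla e^{t\Delta}\|\lesssim t^{-1/2}$ of the heat semigroup, which is integrable at $0$ and compensates for the extra derivative one would otherwise need.
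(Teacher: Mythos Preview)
Your proposal is correct and follows essentially the same route as the paper: mild formulation via the heat semigroup, Banach fixed point in $C([0,T],C_{\rm unif}^{b})\times C([0,T],C_{\rm unif}^{b,1})$ using the $t^{-1/2}$ smoothing estimate to control the chemotaxis divergence term, then bootstrap to classical regularity, non-negativity by comparison, and the blow-up alternative by a restart argument. The only cosmetic difference is that the paper carries out the regularity upgrade by showing $t\mapsto(u(t),v(t))$ is locally H\"older continuous with values in the fractional power space $X^{\beta}$ and then invokes Henry's abstract theory, whereas you phrase the same step as parabolic Schauder; these are equivalent ways of packaging the same analytic content.
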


The second theorem is on the global existence of the classical solution with initial function $(u_0,v_0)\in C_{\rm unif}^{b}(\R^N)\times C_{\rm unif}^{b,1}(\R^N)$.

\begin{tm} \label{Main-thm1}
Suppose that $b>\frac{N\mu\chi}{4}$. Then for every $u_0 \in C_{\rm unif}^{b}(\R^N)$, $v_0 \in C_{\rm unif}^{b,1}(\R^N)$ with  $u_0 \geq 0$, $v_0 \geq 0$,  \eqref{Main-Eq} has a unique bounded global classical solution $(u(x,t;u_0, v_0)$, $v(x,t;u_0, v_0))$ {and
\begin{equation}\label{u-bdd-1}
\limsup_{t\to\infty}\|u(\cdot,t; u_0, v_0)\|_{\infty}\leq \frac{(2\lambda+a)^2}{2\lambda(4b-N\mu\chi)}.
\end{equation}
Moreover, if $\lambda\geq \frac{a}{2}$, then
\begin{equation}\label{u-bdd-2}
\limsup_{t\to\infty}\|u(\cdot,t; u_0, v_0)\|_{\infty}\leq \frac{4a}{4b-N\mu\chi}.
\end{equation}

} %\textbf{(W.S. also try to prove the case $\tau\not =1$)}
\end{tm}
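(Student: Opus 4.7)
The plan is to combine local well-posedness (Theorem 1.1) with a priori sup-norm bounds that exploit the logistic damping $-bu^2$ against the chemotactic feedback when $b > N\mu\chi/4$, and then invoke the blow-up criterion in Theorem 1.1 to upgrade the local solution to a global one. The entire difficulty concentrates in producing the sharp $L^\infty$-bound on $u$; once that is in hand, the bound on $\|v(\cdot,t)\|_{C_{\rm unif}^{b,1}}$ follows cheaply from the second equation.

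First I would treat $v$ via its Duhamel representation
$$v(\cdot,t) = e^{t(\Delta-\lambda)}v_0 + \mu \int_0^t e^{(t-s)(\Delta-\lambda)} u(\cdot,s)\,ds,$$
together with the semigroup bounds $\|e^{t(\Delta-\lambda)}\|_{L^\infty \to L^\infty}\le e^{-\lambda t}$ and $\|\nabla e^{t(\Delta-\lambda)}\|_{L^\infty\to L^\infty}\le C_N\, t^{-1/2}e^{-\lambda t}$, to control $\|v(\cdot,t)\|_\infty$ and $\|\nabla v(\cdot,t)\|_\infty$ by a running supremum of $\|u(\cdot,s)\|_\infty$ on $[0,t]$. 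This step is also where the dimensional constant $N/4$ should enter, through the $L^1(\R^N)$-norms of the fundamental solution and its gradient integrated against $e^{-\lambda s}$.

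Second, I would derive a differential inequality for $M(t) := \|u(\cdot,t)\|_\infty$. Since $\R^N$ is unbounded, sup is not necessarily attained, so I would work with a localized/approximate maximum principle (either a cut-off $\phi_R$ argument or the Stampacchia energy method applied to $(u-k)_+$). Rewriting $-\chi\nabla\cdot(u\nabla v) = -\chi\nabla u\cdot\nabla v - \chi u\Delta v$ and eliminating $\Delta v$ through the second equation via $\Delta v = v_t + \lambda v - \mu u$ produces, at a (near-)maximum of $u$, a pointwise bound of the form
$$\frac{dM}{dt} \leq M\bigl(a - (b-\chi\mu)M - \chi\lambda v + \chi v_t\bigr).$$
The two $v$-terms must then be reabsorbed by coupling back to the representation of $v$ in terms of $u$ obtained in the previous step; the bookkeeping should convert the factor $\chi\mu$ into $N\chi\mu/4$ (making the coefficient of $-M^2$ equal to $b - N\chi\mu/4$, which is positive precisely under the hypothesis) and produce an error term that decays exponentially in $t$. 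Solving the resulting logistic-type ODE gives the general bound $\frac{(2\lambda+a)^2}{2\lambda(4b-N\mu\chi)}$; when $\lambda\ge a/2$ the contribution $\chi\lambda v$ from the drift can be absorbed more efficiently, cleaning the effective growth coefficient back down to $a$ and yielding the sharper $\frac{4a}{4b-N\mu\chi}$.

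Third, once $\|u(\cdot,t)\|_\infty$ is uniformly bounded on $[0,T_{\max})$, the Duhamel bound on $\|\nabla v(\cdot,t)\|_\infty$ stays finite there, so the quantity in Theorem 1.1's blow-up criterion cannot diverge at any finite time, forcing $T_{\max}=\infty$; the $\limsup$ estimates pass to the global solution. The main obstacle is the second step: both the absence of a classical maximum principle on $\R^N$ and the need to track constants precisely enough to land on $N/4$ rather than a looser coefficient make the argument delicate, because each time one estimates $\|\Delta v\|_\infty$ or $\|\nabla v\|_\infty$ by $\|u\|_\infty$ through the heat semigroup, loose Young/H\"older applications cost dimensional factors that would spoil the threshold.
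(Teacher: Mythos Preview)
Your overall architecture---local existence, a priori $L^\infty$ bounds, then the blow-up criterion---is correct, and the Duhamel step bounding $\|v\|_{C^{b,1}_{\rm unif}}$ once $u$ is controlled is exactly what the paper does. But the heart of your argument, Step~2, has a genuine gap: your route to the $u$-bound cannot produce the threshold $b>\tfrac{N}{4}\chi\mu$, and the phrase ``the bookkeeping should convert the factor $\chi\mu$ into $N\chi\mu/4$'' is precisely where it fails. Substituting $\Delta v=v_t+\lambda v-\mu u$ at a (near-)maximum of $u$ gives the coefficient $-(b-\chi\mu)$ in front of $M^2$, period; no heat-kernel $L^1$ constant turns this into $-(b-\tfrac{N}{4}\chi\mu)$. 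Worse, the leftover term $-\chi u\,v_t$ is not absorbable: controlling $\|v_t\|_\infty$ requires $\|\Delta v\|_\infty$, which is exactly the quantity you eliminated, so the argument circles back on itself.

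The missing idea (this is Winkler's device, which the paper adapts to $\R^N$) is to abandon $u$ alone and run the comparison on the \emph{combined} quantity
\[
w:=\frac{1}{\chi}u+\frac{1}{2\mu}|\nabla v|^2.
\]
Differentiating $\tfrac{1}{2}|\nabla v|^2$ via the $v$-equation gives $\tfrac{1}{2}\Delta|\nabla v|^2-|D^2v|^2-\lambda|\nabla v|^2+\mu\nabla v\cdot\nabla u$; adding $\tfrac{1}{\chi}\times$(first equation) makes the cross terms $\pm\nabla u\cdot\nabla v$ cancel exactly, and one is left with
\[
\partial_t w \le \Delta w-\frac{1}{\mu}|D^2v|^2-\frac{\lambda}{\mu}|\nabla v|^2+|u\,\Delta v|+\frac{1}{\chi}u(a-bu).
\]
Now Young's inequality $|u\,\Delta v|\le \tfrac{N\mu}{4}u^2+\tfrac{1}{N\mu}(\Delta v)^2$ together with the purely algebraic trace bound $(\Delta v)^2\le N|D^2v|^2$ absorbs the Hessian term and leaves $-\tfrac{1}{\chi}(b-\tfrac{N\mu\chi}{4})u^2$. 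That is where $N/4$ actually comes from---it is Cauchy--Schwarz on the trace of $D^2v$, not a semigroup constant. The resulting scalar parabolic inequality for $w$ on $\R^N$ gives \eqref{u-bdd-1} by comparison with the ODE. The refined bound \eqref{u-bdd-2} under $\lambda\ge a/2$ is obtained by repeating the same construction with the shifted variables $U=u-\tfrac{a}{b}$, $V=v-\tfrac{\mu a}{\lambda b}$ and the quantity $U+\tfrac{\chi}{2\mu}|\nabla V|^2$.
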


%The second theorem is on the global existence of classical solutions with initial function $(u_0,v_0)\in L^{p}(\R^N)\times %W^{1,p}(\R^N)$.

%\begin{tm}\label{Main-thm2}
%Let $N$ be a positive integer and $p$ be a positive real number with $p>N$ and $p\geq 2$.
%Then there exists a positive constant $C_{\mu, \lambda, \tau, p, \chi}$ such that for every nonnegative initial data $u_{0}\in L^{p}(\R^{N})$, $v_0\in W^{1,p}(\R^N)$, if $b\geq \frac{C_{\mu, \lambda, \tau, p, \chi}}{p}$, then \eqref{Main-Eq} has a unique  global classical  solution $(u(x,t;u_0, v_0),v(x,t;u_0, v_0))$.
%\end{tm}

The third theorem is on the persistence of the global classical solution with strictly positive initial $u_0$.

\begin{tm}\label{Main-thm3}
Suppose that $b>\frac{N\mu\chi}{4}$, then there exist $m>0$ and $M>0$ such that
for any $u_0\in C_{\rm unif}^{b}(\R^N)$, $v_0\in C_{\rm unif}^{b,1}(\R^N)$ with $\inf_{x\in\R^N}u_0>0$ and $v_0\geq 0$, { there is $T(u_0,v_0)$ such that }
$$
m\leq u(x,t; u_0,v_0)\leq M \quad \forall\, x\in\R^N,\,\ t\geq T(u_0,v_0).
$$
\end{tm}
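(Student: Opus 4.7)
The upper bound is an immediate consequence of Theorem~\ref{Main-thm1}: any $M$ strictly larger than $\frac{(2\lambda+a)^2}{2\lambda(4b-N\mu\chi)}$ suffices. The substantive content is the uniform positive lower bound $m$, \emph{independent} of the initial data $(u_0,v_0)$.

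\textbf{Step 1: a priori control of $v$, $\nabla v$ and $\Delta v$.} Once $\|u(\cdot,t)\|_\infty\le M$ for $t\ge T_1(u_0,v_0)$, the heat-semigroup representation
\[
v(\cdot,t)=e^{-\lambda(t-T_1)}e^{(t-T_1)\Delta}v(\cdot,T_1)+\mu\int_{T_1}^{t}e^{-\lambda(t-s)}e^{(t-s)\Delta}u(\cdot,s)\,ds,
\]
combined with standard $L^\infty\to L^\infty$ and $L^\infty\to W^{1,\infty}$ estimates for the heat semigroup, gives $\|v(\cdot,t)\|_\infty\le V_0$ and $\|\nabla v(\cdot,t)\|_\infty\le V_1$ for $t\ge T_2(u_0,v_0)$, with $V_0,V_1$ depending only on $M$ and the parameters. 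A Schauder-type bootstrap on the $v$-equation then supplies the additional bound $\|\Delta v(\cdot,t)\|_\infty+\|v_t(\cdot,t)\|_\infty\le V_2$ on the same range. Expanding the chemotactic term recasts the $u$-equation as a linear parabolic equation
\[
u_t=\Delta u-\chi\nabla v\cdot\nabla u+u\bigl[a-\chi\Delta v-bu\bigr]
\]
with uniformly bounded drift $-\chi\nabla v$ and uniformly bounded zeroth-order coefficient.

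\textbf{Step 2: uniform local lower bound on the mass.} The key intermediate step is to prove that there exist $R,\delta>0$ and $T_3\ge T_2$, independent of $(u_0,v_0)$, such that
\[
\int_{B_R(x_0)}u(y,t;u_0,v_0)\,dy\ge\delta\qquad\forall\,x_0\in\R^N,\ t\ge T_3.
\]
My plan is to argue by contradiction: assume the estimate fails along some sequence $(x_n,t_n)$ with $t_n\to\infty$. Testing the $u$-equation against a smooth compactly supported bump $\phi_{x_n}$ centered at $x_n$ and integrating, the diffusive and chemotactic fluxes are controlled by $\|\Delta\phi_{x_n}\|_\infty$, $M$ and $V_1$, while the logistic term supplies linear growth provided $u\le a/(2b)$ on $\mathrm{supp}\,\phi_{x_n}$. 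When the local mass is sufficiently small, a localized parabolic Harnack estimate applied to the equation from Step~1 yields precisely such a pointwise upper bound for $u$ on a parabolic cylinder around $(x_n,t_n)$; the resulting differential inequality $F'(t)\ge(a/2)F(t)-C$ on $F(t):=\int u\phi_{x_n}\,dy$ forbids $F$ from decaying to zero, the desired contradiction.

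\textbf{Step 3: pointwise lower bound and main obstacle.} Viewing the equation from Step~1 as a linear parabolic equation with bounded coefficients, the parabolic Harnack inequality converts the uniform integral lower bound of Step~2 into the required pointwise lower bound $m=m(R,\delta,\chi,V_1,V_2,a,b)>0$, valid for $t\ge T(u_0,v_0):=T_3+1$ and independent of $(u_0,v_0)$. The principal obstacle, compared with the parabolic-elliptic analogue of \cite{SaSh3}, is that $v$ is not pointwise determined by $u$: a local dip in $u$ does not translate instantaneously into a local dip in $v$ or $\nabla v$, since $v$ carries memory. Consequently the mass-persistence argument in Step~2 must be run on a full parabolic neighbourhood rather than a purely spatial one, and the $v$-equation must be exploited to convert time-integrated smallness of $u$ into smallness of the chemotactic drift appearing in the Harnack step.
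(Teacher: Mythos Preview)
Your approach is genuinely different from the paper's, and Step~2 contains a gap I do not see how to close. The paper never introduces local mass integrals or invokes Harnack. After the a~priori bounds (your Step~1 is the paper's Lemma~\ref{persistence-lm}), it fixes a large radius $L$ and a small threshold $\epsilon$ and splits according to whether $\sup_{B_{2L}(x_0)}u\ge\epsilon$ or $<\epsilon$. When the supremum stays below $\epsilon$ on a time interval, the explicit heat-kernel representation of $v$ forces $v$, $\nabla v$ and (by a compactness argument) $\chi\Delta v$ to be of order $\epsilon$ on the smaller ball $B_L(x_0)$ after a waiting time $T(\epsilon)$; hence $u$ is a supersolution of $u_t=\Delta u+q\cdot\nabla u+\frac{a}{2}u$ there with $|q|$ small. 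Comparison with the \emph{Dirichlet} principal eigenfunction $\phi_0$ of $\Delta+\frac{a}{2}$ on $B_{L_0}(0)$ (with $\lambda_0>0$ once $L_0$ is large) then gives exponential growth of $u$ at the centre, contradicting $u\le\epsilon$; the case $\sup\ge\epsilon$ is handled by a short compactness argument. The point of using the Dirichlet eigenfunction, which vanishes on $\partial B_{L_0}$, is precisely to eliminate the boundary-flux terms that your bump-function test produces.

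In your Step~2 the inequality $F'\ge\frac{a}{2}F-C$ has $C$ coming from $\int u\,\Delta\phi_{x_n}$ and $\chi\int u\,\nabla v\cdot\nabla\phi_{x_n}$; these live on the annulus where $\nabla\phi_{x_n}\neq0$ and are controlled only by the \emph{global} bound $M$ on $u$, not by $F$. With $C$ of order $M$, the inequality permits $F$ to sit at any level in $[0,2C/a]$ indefinitely, so no contradiction follows from $F(t_n)\to0$. To make those flux terms of order $\epsilon$ you would first need $u\le\epsilon$ pointwise on $\mathrm{supp}\,\nabla\phi_{x_n}$, which is essentially what you are trying to derive. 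Relatedly, the claim that small local mass yields a small pointwise upper bound ``via Harnack'' is not the standard Harnack statement: the local maximum principle for subsolutions requires smallness of a \emph{space--time} average over a full parabolic cylinder $Q_{2R}$, not merely of the spatial mass at a single instant $t_n$. Your closing remark about converting time-integrated smallness of $u$ into smallness of the chemotactic drift is exactly the right instinct, and it is what the paper carries out quantitatively in its first step; but that mechanism is needed \emph{before} any growth inequality can be set up, not after.
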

%\textbf{(W.S. the statement in the above theorem is stronger than before)}

The last theorem is on the asymptotic behavior of the global classical solution with strictly positive initial $u_0$.

\begin{tm}\label{Main-thm4}
There exists $K=k(a,\lambda,N)>\frac{N}{4}$ such that if  $b>K\chi\mu$ and $\lambda\ge \frac{a}{2}$,  then the unique bounded global classical solution $(u(x,t;u_0, v_0),v(x,t;u_0, v_0))$ of \eqref{Main-Eq} with $u_{0}\in C_{\rm unif}^{b}(\R^N)$, $v_0 \in C_{\rm unif}^{b,1}(\R^N)$ and $\inf_{x\in\R^N}u_{0}(x)>0$, $v_0\geq 0$,
satisfies that
\begin{equation}\label{u-v-decay}
\|u(\cdot,t;u_0, v_0)-\frac{a}{b}\|_{\infty} + \|v(\cdot,t;u_0, v_0)-\frac{\mu a}{\lambda b}\|_{\infty}\rightarrow 0 \ \text{as} \ t\rightarrow \infty \ \text{exponentially}.
\end{equation}
\end{tm}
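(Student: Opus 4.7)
The plan is to combine the pointwise persistence of Theorem~\ref{Main-thm3} with a maximum-principle and Duhamel bootstrap on the deviations from the constant steady state. The principal novelty compared with the parabolic--elliptic analysis of~\cite{SaSh1} is the need to handle a residual $v_t$ term in the absence of a direct pointwise relation between $\Delta v$ and $u$, which is inherent to the parabolic--parabolic coupling.

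I would begin by invoking Theorem~\ref{Main-thm3} to obtain $T_0 = T_0(u_0, v_0)$ and constants $0 < m \le M$, depending only on the parameters, with $m \le u(x, t) \le M$ for all $x \in \R^N$ and $t \ge T_0$. Standard heat-semigroup estimates in $C^{b,k}_{\rm unif}$-spaces applied to
$$ v(\cdot, t) = e^{-\lambda(t - T_0)} e^{(t - T_0)\Delta} v(\cdot, T_0) + \mu \int_{T_0}^{t} e^{-\lambda(t - s)} e^{(t - s)\Delta} u(\cdot, s)\, ds $$
then give uniform bounds on $v$, $\nabla v$ and $\Delta v$ for $t \ge T_0 + 1$. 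Setting $u^* = a/b$, $v^* = \mu a/(\lambda b)$, $w = u - u^*$, $z = v - v^*$ and noting $bu^* = a$, $\lambda v^* = \mu u^*$, at a supremum point $x_t^+$ of $u(\cdot, t)$ one has $\nabla u(x_t^+, t) = 0$ and $\Delta u(x_t^+, t) \le 0$, whence
$$ w_t(x_t^+, t) \le \bigl[ -\chi u\, \Delta v + u(a - bu) \bigr]\Big|_{(x_t^+, t)}. $$
Substituting $\Delta v = v_t + \lambda v - \mu u$ from the $v$-equation yields
$$ w_t(x_t^+, t) \le \bigl[ -\chi u\, v_t + (\chi\mu - b)\, u\, w - \chi\lambda\, u\, z \bigr]\Big|_{(x_t^+, t)}, $$
with a symmetric inequality at an infimum point of $u(\cdot, t)$.

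The $-\chi u v_t$ contribution has no pointwise sign and is the central obstacle. I would control it by pairing the above pointwise bound with the Duhamel formula, whose time-differentiated version expresses $\|v_t(\cdot, t)\|_\infty$ and $\|z(\cdot, t)\|_\infty$ as time-integrated averages of $\|w(\cdot, \cdot)\|_\infty$, plus exponentially decaying initial contributions. Feeding these back converts the pointwise inequality into a Volterra integral inequality of the form
$$ \|w(\cdot, t)\|_\infty \le C_1 e^{-\gamma(t - T_0)} + C_2 \int_{T_0}^{t} e^{-\gamma'(t - s)} \|w(\cdot, s)\|_\infty\, ds. $$
Under the hypotheses $b > K\chi\mu$ with $K(a, \lambda, N) > N/4$ and $\lambda \ge a/2$, the kernel has $L^1$-norm strictly less than one, and a Gronwall/comparison argument then forces $\|w(\cdot, t)\|_\infty \to 0$ exponentially; the Duhamel bound transfers the same exponential rate to $\|z(\cdot, t)\|_\infty$, establishing~\eqref{u-v-decay}.

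The hardest part will be quantitatively tracking the constants in this last step so that the Volterra kernel is strictly contractive. This is where the threshold $K(a, \lambda, N) > N/4$ emerges---the $N/4$ is inherited from the global-existence constraint of Theorem~\ref{Main-thm1}, while the extra dependence on $a$ and $\lambda$ captures the further damping needed to make the kernel contractive. The assumption $\lambda \ge a/2$ enters to keep certain intermediate constants positive so that the linear comparison ODE system underlying the Gronwall step has strictly negative spectral abscissa.
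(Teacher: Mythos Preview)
Your overall architecture---persistence, then a bootstrap on the deviations $w=u-a/b$ and $z=v-\mu a/(\lambda b)$---matches the paper's. But the specific mechanism you propose for closing the loop has a genuine gap at the point you yourself flag as the ``central obstacle.''

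You plan to control the troublesome $-\chi u\,v_t$ contribution by writing $\|v_t(\cdot,t)\|_\infty$ as a time-integral of $\|w(\cdot,\cdot)\|_\infty$ via the ``time-differentiated'' Duhamel formula for $z$. That formula reads
\[
z_t(t)=(\Delta-\lambda I)\,e^{(t-T_0)(\Delta-\lambda I)}z(T_0)+\mu w(t)+\mu\int_{T_0}^{t}(\Delta-\lambda I)\,e^{(t-s)(\Delta-\lambda I)}w(s)\,ds,
\]
and the operator kernel $\|(\Delta-\lambda I)e^{(t-s)(\Delta-\lambda I)}\|_{L^\infty\to L^\infty}$ behaves like $(t-s)^{-1}e^{-\lambda(t-s)}$, which is \emph{not} integrable near $s=t$. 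So you cannot obtain a Volterra bound of the form $\|v_t(t)\|_\infty\le C\int_{T_0}^{t}e^{-\gamma'(t-s)}\|w(s)\|_\infty\,ds$ from $\|w\|_\infty$ alone; some extra regularity of $w$ (e.g.\ control of $\|A^\beta w\|_\infty$ for $\beta>0$) is needed, and your outline does not provide it. Equivalently, writing $v_t=\Delta z-\lambda z+\mu w$ just shifts the problem to bounding $\|\Delta z\|_\infty$, which again cannot be done from $\|w\|_\infty$ via an integrable kernel.

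The paper sidesteps this entirely by abandoning the pointwise maximum-principle route and applying Duhamel directly to the $U$-equation in the form
\[
U(t)=e^{(t-t_0)(\Delta-aI)}U(t_0)-\chi\int_{t_0}^{t}e^{(t-s)(\Delta-aI)}\nabla\cdot\bigl(u\,\nabla V\bigr)\,ds-b\int_{t_0}^{t}e^{(t-s)(\Delta-aI)}U^2\,ds.
\]
Here the chemotaxis term enters through $e^{(t-s)(\Delta-aI)}\nabla\cdot$, whose $L^\infty$-kernel scales like $(t-s)^{-1/2}$, which \emph{is} integrable; and only $\|\nabla V\|_\infty$ (not $\Delta V$ or $v_t$) must be fed back, itself controlled by $\int(t-s)^{-1/2}e^{-\lambda(t-s)}\|U(s)\|_\infty\,ds$. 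A continuation argument on the set $\{T:\|U(t)\|_\infty\le Be^{-\alpha(t-t_0)}\text{ on }[t_0,T]\}$ then closes with factor $\tfrac12$, and the threshold $K=N/(4\theta_0)$ emerges from making the three resulting pieces each at most $B/6$. If you want to salvage your max-principle approach, you would need an intermediate estimate on $\|A^\beta U\|_\infty$ (the paper proves one en route to its Lemma on $\|\Delta V\|_\infty$), at which point the argument essentially reduces to the paper's.
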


\subsection{Remarks on the main results}

In this subsection, we provide the following remarks on the main results established in this paper.

\begin{itemize}

\item[1.]
Theorem \ref{Local-thm1} is on the local existence of a unique classical solution with nonnegative initial function $(u_0,v_0)\in C_{\rm unif}^{b}(\R^N)\times C_{\rm unif}^{b,1}(\R^N)$. We point out the local existence of a unique  classical solution with $(u_0,v_0)$ in some other spaces can also be proved. For example,  following the similar arguments used in the proof of Theorem \ref{Local-thm1}, the local existence of  a unique classical solution with nonnegative initial function $(u_0,v_0)\in L^{p}(\R^N)\times W^{1,p}(\R^N)$ for $p>N$ and $p\geq 2$ can be proved.
%For given $p>N$ and $\alpha\in(\frac{1}{2}, 1)$, let $X_i^{\alpha}$ be the fractional power space of $\lambda I-\Delta$ on %$X_i$, where  $X_1=L^{P}(\R^N)$, $X_2=W^{1,p}(\R^N)$, it can be proved by using semigroup theory there is a positive number %$T_{\max}=T_{\max}^{\alpha}(u_0, v_0)\in(0, \infty]$ such that \eqref{Main-Eq} has a unique non-negative solution %$(u(x,t;u_{0}, v_{0}),v(x,t;u_{0}, v_{0}))$ on $[0,T_{\max} )$ for any nonnegative initial data $(u_0,v_0)\in X_1^\alpha\times %X_2^\alpha$. Note that $X_1^{\alpha}\subset C_{\rm unif}^{b}(\R^N)$ for $p>N$ and $\alpha\in(\frac{1}{2}, 1)$
%and $X_2^\alpha \subset C_{\rm unif}^{b,1}(\R^N)$. The local existence of classical solution with initial function $(u_0, %v_0)\in X_1^{\alpha}\times X_2^\alpha$ is then guaranteed by Theorem \ref{Local-thm1}. However, the classical solution with %$(u_0, v_0)\in X_1^{\alpha}\times X_2^\alpha$ has more regularity, such as
%$$u(\cdot,\cdot;u_0, v_0)\in C([0, T_{\max} ), X_1^{\alpha}) \cap C^1((0,T_{\max}),L^p(\R^N))\cap C((0, T_{\max}), %X_1^{\beta}),
% $$
% and
% $$ v(\cdot,\cdot;u_0, v_0)\in C([0, T_{\max}), X_2^{\alpha}) \cap C^1((0,T_{\max}),W^{1,p}(\R^N))\cap C((0, T_{\max} ), %X_2^{\beta})$$
% for all $0\leq \beta<1$,  and
% $$\lim_{t\rightarrow 0^{+}}u(\cdot,t;u_{0},v_{0})=u_{0}(\cdot)\,\, \,\text{in}\,\,\, X_1^{\alpha}\quad {\rm and}\quad
% \quad \lim_{t\to 0^{+}}v(\cdot,t;u_0, v_0)=v_0\,\,\, \text{in}\,\,\, X_2^{\alpha},
%$$
%which  are not included in Theorem \ref{Local-thm1}.

\item[2.]
As it is mentioned in the above, consider chemotaxis model \eqref{IntroEq-0} on convex bounded domain with Neumann boundary condition and with $f(u,v)=u(1-bu)$ and $\frac{b}{\chi}$ being  sufficiently large, Winkler \cite{win_JDE2014} proved the global existence of classical solution for every nonnegative initial data $(u_{0},v_0)\in C^{0}(\overline{\Omega})\times W^{1,\infty}(\Omega)$ and the global asymptotic stability of the constant solution $(\frac{1}{b},\frac{1}{b})$.
%in the sense that
% \[
% \lim_{t\to\infty} \big[ %\|u(\cdot;t;u_0)-\frac{1}{b}\|_{L^\infty(\Omega)}+\|v(\cdot,t;u_0)-\frac{1}{b}\|_{L^\infty(\Omega)}\big]=0.
% \]
Theorem \ref{Main-thm1} and Theorem \ref{Main-thm4} stated in the above extend the results in \cite{win_JDE2014} on the global existence and global asymptotical stability of the constant solution  for parabolic-parabolic chemotaxis systems on bounded domains to the whole space. Biologically, the  conditions $b>\frac{N\chi\mu}{4}$ and $b>K\chi\mu$ in Theorems \ref{Main-thm1} and \ref{Main-thm4} indicate that the logistic damping $b$ is large relative to the product of the chemotaxis sensitivity
$\chi$ and the production rate $\mu$ at which the mobile species produces the chemical substance.
The condition $\lambda\ge \frac{a}{2}$ in Theorem \ref{Main-thm4} indicates that the degradation rate of the chemical substance
is large relative to the intrinsic growth rate of the mobile species.

\item[3.]
In \cite{IsSh}, Issa and Shen studied  pointwise persistence in \eqref{IntroEq-0} with $f$ being local as well as nonlocal time and space dependent logistic source. Under certain conditions, they proved that any globally defined positive solution is bounded below by a positive constant independent of its initial data (see \cite[Theorem 1.2]{IsSh}). Our persistence result Theorem \ref{Main-thm3} extends \cite[Theorem 1.2]{IsSh} for parabolic-parabolic chemotaxis systems on bounded domains to the whole space.
Due to the unboundedness of the underlying environment, it is highly nontrivial to  adopt
the arguments in  \cite[Theorem 1.2]{IsSh} to prove Theorem \ref{Main-thm3}.
For given $x_0\in\R^N$ and $L>0$, let
$$
B_L(x_0)=\{x\in\R^N\,|\, |x-x_0|< L\}.
$$
 The main idea to  prove  Theorem \ref{Main-thm3} is to prove that  there are $L^*\gg 1$,
 $\epsilon^*>0$, $\delta^*>0$,  $T^{**}>T^*$, and $M^*>0$  such that   for any  non-negative $(u_0,v_0)\in C_{\rm unif}^b(\R^N)\times C_{\rm unif}^{b,1}(\R^N)$
with $\inf_{x\in\R^N}u_0(x)>0$, the following hold:

 \item[] i)  there is $T^*_0(u_0,v_0)>0$  such that for any $x_0\in\R^N$,  if
 $$
\sup_{x\in{B}_{L^*}(x_0)} u(x,t;u_0,v_0)\le \epsilon^*\quad \forall\, \, T^*_0(u_0,v_0)\le t_1\le t< t_2,
$$
then
\begin{equation*}
\sup_{x\in{B}_{\frac{L^*}{2}}(x_0)}\max\{ v(x,t;u_0,v_0),|\p_{x_i} v(x,t;u_0,v_0)|\}\leq M^*\epsilon^*\quad \forall\,\ t_1+T^*\le t< t_2,\,\, i=1,2,\cdots,N
\end{equation*}
 (that is, if $u(x,t;u_0,v_0)$ is smaller than $\epsilon^*$  on  the space  ball $B_{L^*}(x_0)$ for
 $t\in [t_1,t_2)$,   then $v(x,t;u_0,v_0)$ {and $|\p_{x_i} v(x,t;u_0,v_0)|$ are} smaller than $M^*\epsilon^*$  on the space ball $B_{\frac{L^*}{2}}(x_0)$
 for $t\in [t_1+T^*,t_2)$);

 \item[] ii)  if
$$
\sup_{x\in B_{L^*}(x_0)}u(x,t_0;u_0,v_0)\ge \epsilon^*\quad {\rm and}\quad t_0\ge T_0^*(u_0,v_0),
$$
then
\begin{equation}
\label{persistence-eq3}
\inf_{x\in B_{L^*}(x_0)} u(x,t;u_0,v_0)\ge \delta^* \quad \forall\, t_0\le t\le t_0+T^{**}
\end{equation}
(that is, if $\sup_{x\in B_{L^*}(x_0)}u(x,t_0;u_0,v_0)\ge \epsilon^*$ at some $t_0\ge T_0^*(u_0,v_0)$,
  then $u(x,t;u_0,v_0)$ is larger than $\delta^*$ on the space  ball $B_{L^*}(x_0)$ for $t$ in the interval $[t_0,t_0+T^{**}]$);

 \item[] iii)  if
$$
\begin{cases}
\sup_{x\in B_{L^*}(x_0)}u(x,t_1;u_0,v_0)= \epsilon^*\quad \text{for some}\,\, t_1\ge T_0^*(u_0,v_0)\cr
\sup_{x\in B_{L^*}(x_0)}u(x,t;u_0,v_0)\le  \epsilon^*\quad {\rm for}\,\, t_1< t< t_2\le \infty,
\end{cases}
$$
  then
\begin{equation}
\label{persistence-eq2}
\inf_{x\in B_{L^*}(x_0)} u(x,t;u_0,v_0)\ge \delta^*\quad \forall\,   t_1\le t< t_2
 \end{equation}
(that is, if $\sup_{x\in B_{L^*}(x_0)}u(x,t;u_0,v_0)$ equals $\epsilon^*$ at $t=t_1$ and is less than or equal to $\epsilon^*$ for $t_1<t<t_2$, then  $u(x,t;u_0,v_0)$ is larger than $\delta^*$ on the space  ball $B_{L^*}(x_0)$ for $t$ in the time interval $[t_1, t_2)$);

\item[] iv) there is $T_0^{**}(u_0,v_0)$ such that
  if
$$
\sup_{x\in B_{L^*}(x_0)}u(x,t;u_0,v_0)<\epsilon^*\quad {\rm for}\quad T_0^*(u_0,v_0)\le t <t^*,
$$
then
$$
t^*-T_0^*(u_0,v_0)\le  T_0^{**}(u_0,v_0)
$$
(that is, $u(x,t;u_0,v_0)$ cannot be smaller than $\epsilon^*$  on  the space ball $B_{L^*}(x_0)$ for  a very long  time interval  starting at $T_0^*(u_0,v_0)$). It then follows that
$$
\inf_{x\in\R^N} u(x,t;u_0,v_0)\ge m:=\delta^*\quad \forall\,\ t\ge T_{0}^{*}(u_0,v_0)+T_{0}^{**}(u_0,v_0).
 $$

We believe that these techniques will also play a crucial rule in the study of asymptotic spreading of non-negative bounded global classical solutions.

\item[4.] Theorem \ref{Main-thm4} does not give an explicit expression on $K$, but it has the following property. According to the proof of
Theorem \ref{Main-thm4}, $K=\frac{N}{4\theta_0}$,  where $\theta_0\in (0,1)$ is the largest number such that
\begin{equation}\label{eq-1-00}
\frac{2C_2\theta_0}{(1-\theta_0)^2 a }\le \frac{1}{6}
\quad {\rm and}\quad
\frac{8C \lambda^{-\frac{1}{2}}a^{\frac{1}{2}}\pi \theta_0}{N(1-\theta_0)}\le \frac{1}{12}
\end{equation}
hold simultaneously with $C_2=\max\{C\lambda^{\gamma-\beta-\frac{3}{2}}a^{\beta+\frac{3}{2}}\sqrt{\pi}N^{-2}+C\lambda^{\gamma-\beta-1}a^{\beta+1}N^{-1}, a\}$,  $C$  here  as well as in \eqref{eq-1-00}  is a generic constant and  $\beta$ and $\gamma$ are such that $\gamma\in(1,\frac{3}{2})$ and $\gamma-1<\beta<\frac{1}{2}$.  It can then be verified directly that
for fixed $a$ and $N$, $K$ is bounded in $\lambda\ge \frac{a}{2}$ and
  $K\to \frac{N}{0.28}$ as $\lambda\to \infty$.

\item[5.]
Consider the following general parabolic-parabolic chemotaxis model on bounded domain with Neumann boundary condition
\begin{equation}\label{IntroEq-tau}
\begin{cases}
u_{t}=\Delta u-\chi\nabla\cdot (u\nabla v) + f(u,v),\quad   x\in\Omega, \\
\tau v_t=\Delta v -v+u,\quad  x\in\Omega,\\
\frac{\p u}{\p n}=\frac{\p v}{\p n}=0,\quad x\in \p \Omega.
\end{cases}
\end{equation}
where $\tau>0$ is a positive constant related to the diffusion rate of the chemical substance.
Winkler \cite{win_CPDE2010} considered the system \eqref{IntroEq-tau} in a smooth bounded convex domain $\Omega\subset \R^N$ with $\tau>0$ ($\tau$ is not necessarily 1), $f(u,v)=u(a-bu)$ where $a$ and $b$ are positive constants, $\chi\in\R$ and established the global existence and
boundedness of non-negative classical solutions of system \eqref{IntroEq-tau} provided that $b$ is large enough.
In \cite{ZhLiBaZo}, Zheng, Li, Bao and Zou extended Winkler’s global existence result to bounded domains
(not necessarily convex) of $\R^N$ for $\chi>0$ and proved that if the logistic dampening $b>\frac{(N-2)_{+}}{N}\chi[C_{\frac{N}{2}+1}]^{\frac{1}{\frac{N}{2}+1}}$, where $C_{\frac{N}{2}+1}$ is a positive constant which is corresponding to the maximal Sobolev regularity, then \eqref{IntroEq-tau} admits a unique, smooth,and bounded global
non-negative solution. Recently, Issa and Shen \cite{IsSh} extended the
global existence results obtained in both \cite{win_CPDE2010} and \cite{ZhLiBaZo} to the general full chemotaxis model \eqref{IntroEq-tau} with $f$ being local as well as nonlocal time and space dependent logistic source. The global existence of classical solutions of \eqref{IntroEq-tau} with $\tau\not =1$ and $f$ being logistic source on the whole space $\R^N$ will be studied somewhere else.

\end{itemize}

The rest of the paper is organized as follows: In section 2, we present some preliminary materials that will be needed in the proofs of our main results. In section 3, we study the local and global existence of the classical solution of \eqref{Main-Eq} with given initial function and prove Theorems \ref{Local-thm1} and \ref{Main-thm1}. In section 4, we explore the persistence of the global classical solution of \eqref{Main-Eq} with strictly positive initial $u_0$ and prove Theorem \ref{Main-thm3}. The last section is devoted to discuss the asymptotic behavior of global classical solutions and prove Theorem \ref{Main-thm4}.

\section{Preliminaries}

%\textbf{(W.S. Present those which will be needed in the following sections)}

In this section, we present several lemmas which will be used often in the later sections. The reader is referred to \cite{Dan Henry}, \cite{A. Pazy} for the details.

 Throughout this paper,  $\{e^{t(\Delta-\sigma I)}\}_{t>0}$ where $\sigma>0$ denotes the analytic  semigroup generated by $\Delta-\sigma I $ on $X:=C_{\rm unif}^b(\R^N)$, unless specified otherwise.
Then we have
\begin{equation}\label{Lp-Lq-1}
 \|e^{t(\Delta-\sigma I)} u \|_{\infty}\leq e^{-\sigma t}\|u\|_{\infty},
\end{equation}
\begin{equation}\label{Lp-Lq-2}
 \|\nabla e^{t(\Delta-\sigma I)} u \|_{\infty}\leq  C_{N} t^{-\frac{1}{2}}e^{-\sigma t}\|u\|_{\infty},
\end{equation}
\begin{equation}\label{Lp-Lq-3}
 \|(\sigma I-\Delta)^{\alpha} e^{t(\Delta-\sigma I)}u \|_{\infty}\leq C_{\alpha} t^{-\alpha}e^{-\sigma t}\|u\|_{\infty}
\end{equation}
 for every $t>0$ and $\alpha\geq 0$.  In fact,
 \eqref{Lp-Lq-1} and \eqref{Lp-Lq-2} follow directly from the following equation,
 \begin{equation}
\label{semigroup-eq}
(e^{t(\Delta-\sigma I)} u)(x)= \int_{\R^{N}}e^{-\sigma t}\frac{1}{(4\pi t)^{\frac{N}{2}}}e^{-\frac{|x-y|^{2}}{4t}}u(y)dy
\end{equation}
for every $u\in C_{\rm unif}^b(\R^{N})$, $t>0$, $x\in\R^N$.
   \eqref{Lp-Lq-3} is a result of the combination of Theorem 1.4.3 in \cite{Dan Henry} and \eqref{Lp-Lq-1}.

\begin{lem}\label{L_Infty bound 2}
 For every  $t>0$, the operator $e^{t(\Delta-\sigma I)} \nabla\cdot $ has a unique bounded extension on $\big(C_{\rm unif}^b(\R^N)\big)^N$  satisfying
\begin{equation}\label{EqL_Infty03}
\|e^{t(\Delta-\sigma I)}\nabla \cdot u\|_{\infty}\leq\frac{N}{\sqrt{\pi}} t^{-\frac{1}{2}}e^{-\sigma t}\|u\|_{\infty} \ \ \ \forall \ u\in \big(C_{\rm unif}^{b}(\R^N)\big)^N, \ \forall \ t>0.
\end{equation}
\end{lem}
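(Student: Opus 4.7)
The plan is to prove the bound first on a dense subspace where the expression $e^{t(\Delta - \sigma I)} \nabla \cdot u$ is defined classically, and then extend by density. I would begin by assuming each component $u_i$ lies in $C^{b,1}_{\rm unif}(\R^N)$, so that $\nabla \cdot u \in C^b_{\rm unif}(\R^N)$ and formula \eqref{semigroup-eq} applies directly:
\begin{equation*}
(e^{t(\Delta - \sigma I)}\nabla\cdot u)(x) = e^{-\sigma t}\int_{\R^N}\frac{1}{(4\pi t)^{N/2}} e^{-|x-y|^2/(4t)} \sum_{i=1}^N \p_{y_i} u_i(y)\, dy.
\end{equation*}

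Next, I would integrate by parts in $y_i$; the boundary terms vanish because of the rapid decay of the Gaussian kernel and the boundedness of $u_i$. Using
\begin{equation*}
\p_{y_i}\left[\frac{1}{(4\pi t)^{N/2}} e^{-|x-y|^2/(4t)}\right] = \frac{x_i - y_i}{2t}\cdot\frac{1}{(4\pi t)^{N/2}} e^{-|x-y|^2/(4t)},
\end{equation*}
this produces the representation
\begin{equation*}
(e^{t(\Delta - \sigma I)}\nabla\cdot u)(x) = -e^{-\sigma t}\int_{\R^N}\sum_{i=1}^N \frac{x_i - y_i}{2t(4\pi t)^{N/2}} e^{-|x-y|^2/(4t)} u_i(y)\, dy,
\end{equation*}
which crucially no longer involves derivatives of $u$. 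Taking absolute values, pulling out $\|u\|_\infty$, and performing the change of variables $z = (x-y)/\sqrt{4t}$, each term $i$ reduces to
\begin{equation*}
\frac{1}{\sqrt{t}\,\pi^{N/2}}\int_{\R^N} |z_i|\, e^{-|z|^2}\, dz = \frac{1}{\sqrt{t}\,\pi^{N/2}}\cdot 1\cdot \pi^{(N-1)/2} = \frac{1}{\sqrt{\pi t}},
\end{equation*}
using $\int_{\R}|s|e^{-s^2}ds = 1$ and $\int_{\R}e^{-s^2}ds = \sqrt{\pi}$. Summing over $i = 1, \dots, N$ gives the claimed bound $\tfrac{N}{\sqrt{\pi}} t^{-1/2} e^{-\sigma t}\|u\|_\infty$.

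Finally, for the uniqueness and existence of the bounded extension to all of $\big(C^b_{\rm unif}(\R^N)\big)^N$, I would observe that the integral formula displayed above defines, for each fixed $t>0$, a bounded linear operator from $\big(C^b_{\rm unif}(\R^N)\big)^N$ into $C^b_{\rm unif}(\R^N)$ (continuity of the image in $x$ follows from dominated convergence). Since $\big(C^{b,1}_{\rm unif}(\R^N)\big)^N$ is dense in $\big(C^b_{\rm unif}(\R^N)\big)^N$ in the $\|\cdot\|_\infty$ topology and the two expressions agree on this dense subspace, the integral formula provides the unique continuous extension, and \eqref{EqL_Infty03} persists on the whole space. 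I do not anticipate a serious obstacle here: the main content is the integration-by-parts step and the Gaussian moment computation, both elementary; the only care needed is in justifying the density/extension argument cleanly so that the right-hand side of \eqref{EqL_Infty03} is interpreted as the value of this canonical extension.
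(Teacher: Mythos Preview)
Your argument is correct: the integration-by-parts step transfers the divergence onto the heat kernel, the Gaussian moment computation yields exactly $\frac{N}{\sqrt{\pi}}t^{-1/2}e^{-\sigma t}$, and the density of $\big(C^{b,1}_{\rm unif}(\R^N)\big)^N$ in $\big(C^b_{\rm unif}(\R^N)\big)^N$ (via mollification, using uniform continuity) gives the unique bounded extension. The paper does not prove this lemma in place but simply cites \cite[Lemma~3.2]{SaSh1}; your computation is precisely the standard argument behind that cited result, so your approach coincides with what the paper relies on.
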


\begin{proof}
It follows from \cite[Lemma 3.2]{SaSh1}.
\end{proof}

 Note that ${\rm Dom}(\Delta-\sigma I )=C^{b,2}_{\rm unif}(\R^N)$. Let $X^{\alpha}={\rm Dom}((\sigma I-\Delta )^{\alpha})$ be the fractional power space of $\sigma I-\Delta$ on $X$ ($\alpha \in [0,1]$) equipped with graph norm $\|u\|_{X^{\alpha}}=\|(\sigma I-\Delta)^{\alpha}u\|_{X}$.  We have
the following continuous imbedding
\begin{equation}\label{Fractional power Imbedding-1}
X^{\alpha} \hookrightarrow C^{\nu} \quad \text{if} \quad 0\leq \nu < 2\alpha
\end{equation}
(see \cite[Exercise 9 in Chapter 1]{Dan Henry}).
Furthermore, there is a constant $C_{\alpha}$ such that
\begin{equation}\label{Eq_00}
\|(e^{t(\Delta-\sigma I)}-I)u\|_{X}\leq C_{\alpha}t^{\alpha}\|u\|_{X^{\alpha}} \quad \text{for all}\ u\in X^{\alpha}.
\end{equation}
Inequality \eqref{Eq_00} comes from \cite[Theorem 1.4.3]{Dan Henry}. Note that $X^0=X$ and $X^1={\rm Dom}(\sigma I-\Delta)$.

We end this section by stating an important result that will be used in the proof of the local existence of classical solutions.

\begin{lem}(\cite[Exercise $4^{*}$, page 190]{Dan Henry})
\label{Lem2}
Assume that $a_{1},a_{2},\alpha,\beta$ are non-negative constants , with $0\leq \alpha, \ \beta<1,$ and $0<T<\infty$.  There exists a constant $M(a_{2},\beta,T)<\infty$ so that for any integrable function $u : [0,\ T]\rightarrow \R$ satisfying that
$$
0\leq u(t)\leq a_{1}t^{-\alpha} +a_{2}\int_{0}^{t}(t-s)^{-\beta}u(s)ds
$$
for a.e t in $[0, \ T],$ we have
$$
0\leq u(t)\ \leq\ \frac{a_{1} M}{1-\alpha}t^{-\alpha}, \ \ \text{a.e. on }\ 0< t< T.
$$
\end{lem}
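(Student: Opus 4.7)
The plan is to prove the bound by iterating the integral inequality, computing the kernel of the iterate exactly using the Beta function, and then showing that the series of iterates converges while the remainder vanishes.

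Write the inequality abstractly as $u(t)\le \phi(t)+(Tu)(t)$, where $\phi(t)=a_1t^{-\alpha}$ and $(Tf)(t):=a_2\int_0^t(t-s)^{-\beta}f(s)\,ds$. By induction, $u(t)\le \sum_{k=0}^{n-1}T^k\phi(t)+T^nu(t)$ for every $n\ge 1$. The iterates $T^k\phi$ are computed in closed form using the Beta integral $\int_0^t(t-s)^{-\beta}s^{\gamma}\,ds=B(1-\beta,1+\gamma)\,t^{1-\beta+\gamma}$ (valid for $\gamma>-1$, $\beta<1$). A straightforward induction, which at each step collapses consecutive Beta functions via $B(p,q)=\Gamma(p)\Gamma(q)/\Gamma(p+q)$, yields the clean expression
\[
T^k\phi(t)=a_1\,a_2^k\,\frac{\Gamma(1-\beta)^k\,\Gamma(1-\alpha)}{\Gamma\bigl(k(1-\beta)+1-\alpha\bigr)}\,t^{\,k(1-\beta)-\alpha}.
\]

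Next, since $t\in(0,T]$, factor out $t^{-\alpha}$ and bound $t^{k(1-\beta)}\le \max(1,T^{k(1-\beta)})$. Summing,
\[
\sum_{k=0}^{\infty}T^k\phi(t)\;\le\; a_1\,\Gamma(1-\alpha)\,t^{-\alpha}\sum_{k=0}^{\infty}\frac{\bigl(a_2\,\Gamma(1-\beta)\,\max(1,T^{1-\beta})\bigr)^{k}}{\Gamma\bigl(k(1-\beta)+1-\alpha\bigr)}.
\]
The inner series is a Mittag-Leffler type series; because $\Gamma(k(1-\beta)+1-\alpha)$ grows super-exponentially in $k$ while the numerator grows only geometrically, the series converges and is bounded by some $\widetilde M=\widetilde M(a_2,\beta,T)$ uniformly in $\alpha\in[0,1)$. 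Using $\Gamma(1-\alpha)=\Gamma(2-\alpha)/(1-\alpha)$ and the fact that $\Gamma(2-\alpha)$ is bounded on $\alpha\in[0,1]$, absorb the factor into a constant to obtain a bound of the form $\dfrac{a_1M}{1-\alpha}\,t^{-\alpha}$.

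It remains to dispose of the remainder $T^nu$. Here the $n$-fold convolution identity gives $T^n u(t)=a_2^n\,\frac{\Gamma(1-\beta)^n}{\Gamma(n(1-\beta))}\int_0^t(t-s)^{n(1-\beta)-1}u(s)\,ds$. For $n$ large enough that $n(1-\beta)\ge 1$, the kernel is uniformly bounded on $[0,T]$, so integrability of $u$ on $[0,T]$ yields
\[
|T^nu(t)|\;\le\;\frac{\bigl(a_2\,T^{1-\beta}\,\Gamma(1-\beta)\bigr)^n}{T\,\Gamma(n(1-\beta))}\,\|u\|_{L^1(0,T)}\;\longrightarrow\;0
\]
as $n\to\infty$. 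Taking $n\to\infty$ in the iterated inequality gives $u(t)\le\sum_{k\ge 0}T^k\phi(t)\le\frac{a_1M}{1-\alpha}t^{-\alpha}$, as required.

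The main technical hurdle is keeping the $\alpha$-dependence sharp: the constant must be $M/(1-\alpha)$ and not depend on $\alpha$ through the Mittag-Leffler series. This is resolved by the observation above that $\Gamma(2-\alpha)$ is bounded on $[0,1]$ and that in the denominator $\Gamma(k(1-\beta)+1-\alpha)$ can be bounded below (for $k\ge 1$) uniformly in $\alpha\in[0,1)$ by $\min_{s\in[k(1-\beta),k(1-\beta)+1]}\Gamma(s)$, which is independent of $\alpha$. Everything else — verifying integrability of the iterated integrands, justifying Fubini in the inductive step, and checking the $n=0$ base case — is routine.
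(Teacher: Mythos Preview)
The paper does not prove this lemma; it is quoted without proof as an exercise from Henry's book. Your iteration argument via the Beta/Gamma identities is the standard route to such singular Gronwall-type inequalities and is essentially correct: the explicit formula for $T^k\phi$ and for the iterated kernel of $T^n$ are both right, the Mittag-Leffler series converges for the reasons you state, and the remainder $T^n u$ indeed tends to zero pointwise by the $L^1$ bound on $u$. Your handling of the $\alpha$-dependence is also sound: writing $\Gamma(1-\alpha)=\Gamma(2-\alpha)/(1-\alpha)$ isolates the required $1/(1-\alpha)$, and since $\Gamma(2-\alpha)\le 1$ on $[0,1]$ while each denominator $\Gamma(k(1-\beta)+1-\alpha)$ for $k\ge 1$ is bounded below by the minimum of $\Gamma$ on the compact interval $[k(1-\beta),k(1-\beta)+1]\subset(0,\infty)$, the resulting constant depends only on $a_2,\beta,T$. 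One small cosmetic point: the bound $t^{k(1-\beta)}\le\max(1,T^{1-\beta})^k$ can simply be $t^{k(1-\beta)}\le T^{k(1-\beta)}$ since $k(1-\beta)>0$.
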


\section{Local and global existence of classical solutions}
%\textbf{(W.S. state the local existence theorems and prove them in this section. May only state and prove the following two.)}
%\noindent *****************************

This section is devoted to the study of the local and global existence of classical solutions of \eqref{Main-Eq} with given initial functions and prove Theorems \ref{Local-thm1} and \ref{Main-thm1}. Throughout this section, unless specified otherwise,
$C$ denotes a generic  constant
independent of $u,v$ and may be different at different places.

\subsection{Proof of Theorem \ref{Local-thm1}}

In this subsection, we prove Theorem \ref{Local-thm1}. The main tools for the proof of this theorem are based on the contraction mapping theorem and the existence of classical solutions for linear parabolic equations with H\"older continuous coefficients. Throughout this subsection,  $X_1=C_{\rm unif}^b(\R^N)$, $X_2= C_{\rm unif}^{b,1}(\R^N)$, and $X_{i}^\alpha$ is the fractional power space of $\lambda I-\Delta$ acting on $X_i$, $i=1,2$ ($\alpha\in (0,1)$).

\begin{proof}[Proof of Theorem \ref{Local-thm1}]
It can be proved by properly modifying arguments of \cite[Theorem 1.1]{SaSh1}.
 For the reader's convenience, we provide
the outline of the proof.

\medskip

\noindent \textbf{(i) Existence of a mild solution}. We first prove the existence of a mild solution of \eqref{Main-Eq} with given initial function $u_0\in C_{\rm unif}^b(\R^N)$, $v_0 \in C_{\rm unif}^{b,1}(\R^N)$, that is, the existence of $(u(t),v(t))$ satisfying
\begin{equation}
\label{mild-solution-eq}
\begin{cases}
u(t)=e^{t(\Delta-\lambda I)}u_0-\chi\int_0^t e^{(t-s)(\Delta-\lambda I)}\nabla\cdot(u(s)\nabla v(s))ds\cr
\qquad \quad +\int_0^t e^{(t-s)(\Delta-\lambda I)}u(s)(a+\lambda-bu(s))ds\cr
v(t)=e^{t(\Delta-\lambda I)}v_0+\mu\int_0^t e^{(t-s)(\Delta-\lambda I)}u(s)ds.
\end{cases}
\end{equation}

 To this end, let $X=X_1\times X_2$. Fix $(u_0,v_0)\in X$. For every $T>0$ and $R>0$ satisfying $\|u_0\|_{\infty} \leq R$ and $\|v_0\|_{C_{\rm unif}^{b,1}(\R^N)} \leq R$, let
$$\mathcal{S}_{R,T}:=\left\{(u, v) \in C([0, T ], C_{\rm unif}^b(\R^N) )\times C([0, T ], C_{\rm unif}^{b,1}(\R^N) ) \ | \ \|u\|_{\infty} \leq R\,\, {\rm and}\,\
\|v\|_{C_{\rm unif}^{b,1}(\R^N)} \leq R \right\}.
$$
Note that $\mathcal{S}_{R,T}$ is a closed subset of the Banach space $C([0, T ], C_{\rm unif}^b(\R^N) )\times C([0, T ], C_{\rm unif}^{b,1}(\R^N) )  $ with the norm
$\|(u,v)\|_{\mathcal{S}_{R,T} }=\sup_{0\leq t\leq T}\|u(t)\|_{\infty}+\sup_{0\leq t\leq T}\|v\|_{C_{\rm unif}^{b,1}(\R^N)}$.
We prove the existence of a mild solutions via five claims.

\medskip

\noindent {\bf Claim 1.} {\it For any $(u, v)\in \mathcal{S}_{R,T}$ and $t\in [0,T]$,
 $\Phi(u, v)(t)= (\Phi_1(u, v)(t),\Phi_2(u, v)(t))$ is well defined {in $X$}, where
$$
\Phi_1(u,v)(t)=e^{t(\Delta-\lambda I)}u_0-\chi\int_0^t e^{(t-s)(\Delta-\lambda I)}\nabla\cdot(u(s)\nabla v(s))ds+\int_0^t e^{(t-s)(\Delta-\lambda I)}u(s)(a+\lambda-bu(s))ds,
$$
and
$$
\Phi_2(u,v)(t)=
 e^{t(\Delta-\lambda I)}v_0+\mu\int_0^t e^{(t-s)(\Delta-\lambda I)}u(s)ds.
 $$
}

Indeed, let $(u, v)\in\mathcal{S}_{R,T}$ and $0<t\leq T$ be fixed.
By the similar arguments to those of Claim 1 of \cite[Theorem 1.1]{SaSh1},
$\Phi_1(u, v)(t)$ is well defined in  $C^b_{\rm unif}(\R^N)$.

Since $v_0\in C_{\rm unif}^{b,1}(\R^N)$, we have  $e^{t(\Delta-\lambda I)}v_0$ is well defined in $C_{\rm unif}^{b,1}(\R^N)$.
For every $s_1, s_2\in[0, t)$, by \eqref{Lp-Lq-2},   we have that
\begin{align*}
&\|\nabla e^{(t-s_1)(\Delta-\lambda I)}u(s_1)-\nabla e^{(t-s_2)(\Delta-\lambda I)}u(s_2)\|_{\infty}\cr
&\leq \|\nabla e^{(t-s_1)(\Delta-\lambda I)}u(s_1)-\nabla e^{(t-s_1)(\Delta-\lambda I)}u(s_2)\|_{\infty}+\|\nabla e^{(t-s_1)(\Delta-\lambda I)}u(s_1)-\nabla e^{(t-s_2)(\Delta-\lambda I)}u(s_2)\|_{\infty}\cr
&\leq C(t-s_1)^{-\frac{1}{2}}e^{-\lambda(t-s_1)}\|u(s_1)-u(s_2))\|_{\infty}+C(t-s_2)^{-\frac{1}{2}}e^{-\lambda(t-s_2)}\|e^{(s_2-s_1)(\Delta-\lambda I)}u(s_2)-u(s_2))\|_{\infty}\cr
&\to 0 \quad {\rm as}\,\ s_1\to s_2.
\end{align*}
Thus, $[0, t)\ni s\mapsto \nabla e^{(t-s)(\Delta-\lambda I)}u(s)$ is continuous.
 By  \eqref{Lp-Lq-2} again,
$$
\|\nabla e^{(t-s)(\Delta-\lambda I)} u (s)\|_{\infty}\leq C(t-s)^{-\frac{1}{2}}e^{-\lambda (t-s)}\|u(s)\|_{\infty}\,\, \forall\, s\in [0,t)
$$
and $s\mapsto (t-s)^{-\frac{1}{2}}e^{-\lambda (t-s)}\|u(s)\|_{\infty}$ is integrable on $[0,t)$.
Hence, by the dominated convergence theorem,  $\nabla \int_0^t e^{(t-s)(\Delta-\lambda I)}u(s)ds$ exists and
$$\nabla \int_0^t e^{(t-s)(\Delta-\lambda I)}u(s)ds=\int_0^t \nabla e^{(t-s)(\Delta-\lambda I)}u(s)ds\in C^b_{\rm uinf}(\R^N).$$
Thus, $\Phi_2(u, v)(t)$ is well defined in  $C^{b,1}_{\rm unif}(\R^N)$.
Whence, the Claim 1 follows.

\medskip

\noindent {\bf Claim 2.}  {\it For every $(u, v)\in \mathcal{S}_{R,T}$, choose $0<\beta<\frac{1}{4}$ and $\frac{1}{2}<\gamma<1$ such that $\gamma+2\beta<1$. Then the function $(0, T]\ni t\to \Phi (u, v)(t)\in X^{\beta}$ is locally H\"older continuous, and $\Phi$ maps $\mathcal{S}_{R,T}$ into $C([0,T],C_{\rm unif}^b(\R^N))\times C([0,T],C_{\rm unif}^{b,1}(\R^N)) $.}

\medskip

First, by the similar arguments to those in Claim 2 of \cite[Theorem 1.1]{SaSh1},
 the function $(0, T]\ni t\to \Phi_1(u, v)(t)\in X_1^{\beta}$ is locally H\"older continuous.

Next, observe that
$$
\Phi_2(u,v)(t)=\underbrace{e^{t(\Delta-\lambda I)}v_0}_{J_0(t)}+\mu\underbrace{\int_0^t e^{(t-s)(\Delta-\lambda I)}u(s)ds}_{J_1(t)}
$$
For every $t>0$, it is clear that $J_0(t)=e^{t(\Delta-\lambda I)}v_{0}\in X_2^{\beta}$ because the semigroup $\{e^{t(\Delta-\lambda I)}\}_{t\geq 0}$ is  analytic.
Furthermore, since $X^{\gamma}_1\hookrightarrow C^1$, we have that
\begin{align}
\|J_1(t)\|_{X_2^\beta}&\le  \int_{0}^{t}\|(\lambda I-\Delta)^{\beta}e^{(t-s)(\Delta-\lambda I)}u(s) \|_{C^{b,1}_{\rm{unif}}(\R^N)}ds\nonumber \\
&\leq C\int_{0}^{t}\|(\lambda I-\Delta)^{\gamma+\beta}e^{(t-s)(\Delta-\lambda I)}u(s) \|_{\infty}ds\nonumber \\
&\leq  C\int_{0}^{t}(t-s)^{-\gamma-\beta}e^{-\lambda(t-s)}\|u(s)\|_{\infty}ds\leq  C R.
\end{align}
Since the operator $(\lambda I-\Delta)^{\beta}$ is closed, we have $J_1(t)\in X^{\beta}_2$. Hence, $\Phi_2(u,v)(t)\in X_2^{\beta}$ for every $t>0$. Therefore, $\Phi(u,v)(t)\in X^{\beta}$ for every $t>0$.

By \cite[Theorem 1.4.3]{Dan Henry}, we have
 \begin{align}\label{correct05}
\|J_{0}(t+h)-J_{0}(t)\|_{X_2^{\beta}}
&=  \|(e^{h(\Delta-\lambda I)}-I)e^{t(\Delta-\lambda I)}v_{0}\|_{X_2^{\beta}}\leq  C h^\beta\|(\lambda I-\Delta)^{\beta}e^{t(\Delta-\lambda I)}v_{0}\|_{X_2^{\beta}}\nonumber\\
&\leq C h^\beta\|(\lambda I-\Delta)^{2\beta}e^{t(\Delta-\lambda I)}v_{0}\|_{X_1^{\gamma}}
\leq  C R t^{-2\beta-\gamma}h^{\beta}.
\end{align}
Hence, $(0, T]\ni t\to J_0(t)\in X_2^\beta$ is
locally H\"older continuous. We also have
 \begin{align}\label{correct05}
&\|J_{1}(t+h)-J_{1}(t)\|_{X_2^{\beta}}\nonumber\\
&\leq \int_{0}^{t}\|(e^{h(\Delta-\lambda I)}-I)e^{(t-s)(\Delta-\lambda I)}u(s)\|_{X_2^{\beta}}ds + \int_{t}^{t+h}\|e^{(t+h-s)(\Delta-\lambda I)}u(s)\|_{X_2^{\beta}}ds \nonumber\\
&\leq C h^\beta\int_{0}^{t}\|(\lambda I-\Delta)^{2\beta}e^{(t-s)(\Delta-\lambda I)}u(s)\|_{X_2}ds + \int_{t}^{t+h}\|(\lambda I-\Delta)^{\beta} e^{(t+h-s)(\Delta-\lambda I)}u(s)\|_{X_2}ds \nonumber\\
&\leq C h^\beta\int_{0}^{t}(t-s)^{-2\beta-\gamma}e^{-\lambda(t-s)}\|u(s)\|_{\infty}ds + C \int_{t}^{t+h}(t+h-s)^{-\beta-\gamma}e^{-\lambda(t+h-s)} \|u(s)\|_{\infty}ds\nonumber\\
&\leq   CR(h^{\beta} +  h^{1-\gamma-\beta} ).
\end{align}
Hence, $(0, T]\ni t\to \Phi_2(u, v)(t)\in X_2^{\beta}$ is locally H\"older continuous. Thus, $(0, T]\ni t\to \Phi(u, v)(t)\in X^{\beta}$ is locally H\"older continuous.  It is clear that {$t\to \Phi(u,v)(t)\in X$} is continuous in $t$ at $t=0$. Claim 2 thus follows.

\medskip

\noindent {\bf Claim 3.}
 {\it For every $ R>\max\{\|u_{0}\|_{\infty}, \|v_0\|_{C^{b,1}_{\rm unif}(\R^N)}\}$, there exists $T:=T(R)$ such that  $\Phi$ maps $\mathcal{S}_{R,T}$ into itself.}

 \medskip

First, observe that for any $(u, v)\in \mathcal{S}_{R,T}$,  we have
\begin{align}\label{aux-claim3-eq1}
 \|\Phi_1(u, v)(t)\|_{\infty }
 & \leq  \|e^{t(\Delta-\lambda I)}u_{0}\|_{\infty}+\chi\int_0^t\| e^{(t-s)(\Delta-\lambda I)}\nabla\cdot ( u(s)  \nabla v(s))\|_{\infty}ds \nonumber\\
 &\,\, +(\lambda+a)\int_{0}^{t} \|e^{(t-s)(\Delta-\lambda I)}u(s)\|_{\infty}ds+b\int_{0}^{t} \|e^{(t-s)(\Delta-\lambda I)}u^{2}(s)\|_{\infty}ds \nonumber\\
 & \leq  e^{-\lambda t}\|u_{0}\|_{\infty}+\chi R^2 C\int_0^t (t-s)^{-\frac{1}{2}} e^{-\lambda (t-s)}ds \nonumber\\
 &\,\,  +(\lambda+a)R\int_{0}^{t} e^{-\lambda(t-s)}ds+bR^{2}\int_{0}^{t} e^{-\lambda(t-s)}ds \nonumber\\
 & \leq e^{-\lambda t}\|u_{0}\|_{\infty} +2\chi R^{2}Ct^{\frac{1}{2}} + R\left((1+\frac{a}{\lambda})+b\frac{R}{\lambda}\right)(1-e^{-\lambda t}).
 \end{align}

Next, for any $(u, v)\in \mathcal{S}_{R,T}$,  we have
\begin{align}\label{aux-claim3-eq5}
\|\Phi_2(u,v)(t)\|_{C_{\rm unif}^{b,1}(\R^N)}&\leq \|e^{t(\Delta-\lambda I)}v_0\|_{C_{\rm unif}^{b,1}(\R^N)}+\|\mu \int_0^t e^{(t-s)(\Delta-\lambda I)}u(s)ds\|_{C_{\rm unif}^{b,1}(\R^N)}\nonumber\\
&\leq \|e^{t(\Delta-\lambda I)}v_0\|_{\infty}+C\|\nabla e^{t(\Delta-\lambda I)}v_0\|_{\infty} \nonumber\\
&\,\,\, + \|\mu\int_0^t e^{(t-s)(\Delta-\lambda I)}u(s)ds\|_{\infty} +C\|\mu\nabla\int_0^t e^{(t-s)(\Delta-\lambda I)}u(s)ds\|_{\infty} \nonumber\\
&\le e^{-\lambda t}\|v_0\|_{\infty}+Ce^{-\lambda t}\|\nabla v_0\|_{\infty} + \mu\sup_{0\leq s\leq t}{\|u(s)\|_{\infty}}\int_0^t e^{-\lambda(t-s)}ds\nonumber\\
 &\,\,\, +\mu C\sup_{0\leq s\leq t}{\|u(s)\|_{\infty}}\int_0^t (t-s)^{-\frac{1}{2}}e^{-\lambda(t-s)}ds \nonumber\\
&\leq Ce^{-\lambda t}\|v_0\|_{C_{\rm unif}^{b,1}(\R^N)}+\frac{\mu}{\lambda}R(1-e^{- \lambda t})+\mu CRt^{\frac{1}{2}}.
\end{align}
Claim 3 then follows from \eqref{aux-claim3-eq1},  \eqref{aux-claim3-eq5},
and Claim 2.

\medskip

\noindent \textbf{Claim 4.}
 {\it $\Phi$ is a contraction map for T small and hence  has a fixed point $(u(\cdot), v(\cdot))\in \mathcal{S}_{R,T}$. Moreover, for every
$0<\beta<\frac{1}{4}$,
$\frac{1}{2}<\gamma<1$ such that $\gamma+2\beta<1$, the function $t\in(0,T]\to (u(t), v(t))\in X^{\beta}$ is locally H\"older continuous.}

For every $(u, v), (\bar u, \bar v) \in\mathcal{S}_{R,T},$ using again Lemma \ref{L_Infty bound 2}, we have
\begin{align*}
 \|\Phi_1(u,v)(t)-\Phi_1(\bar u,\bar v)(t)\|_{\infty}
&\leq  {2}\chi C\int_{0}^{t}(t-s)^{-\frac{1}{2}}e^{-\lambda(t-s)}\| u(s) -\bar u(s)\|_{\infty}\|\nabla \bar v(s)\|_{\infty}ds \nonumber\\
& \quad  + {2}\chi C\int_{0}^{t}(t-s)^{-\frac{1}{2}}e^{-\lambda(t-s)}\|u(s)\|_{\infty}\| \nabla (v(s)-\bar v(s))\|_{\infty}ds \nonumber\\
& \quad  +(\lambda+a+2Rb)\sup_{0\leq s\leq t} \|u(s)-\bar u(s)\|_{\infty}\int_{0}^{t} e^{-\lambda(t-s)}ds\nonumber\\
&\leq \left[4RC\chi t^{\frac{1}{2}}+ (\lambda+a+2Rb)t \right]\|( u, v) -(\bar u, \bar v)\|_{\mathcal{S}_{R,T}},
\end{align*}
and
\begin{align*}
& \|\Phi_2(u,v)(t)-\Phi_2(\bar u,\bar v)(t)\|_{C^{b, 1}_{\rm{unif}}(\R^N)}\nonumber\\
& \leq \mu \int_{0}^{t}{\|e^{(t-s)(\Delta-\lambda I)}(u(s)-\bar u(s))ds} \|_{\infty}ds+
\mu C\int_{0}^{t}{\|\nabla e^{(t-s)(\Delta-\lambda I)}(u(s)-\bar u(s))ds} \|_{\infty}ds
\nonumber\\
& \leq \mu \sup_{0\leq s\leq t} \|u(s)-\bar u(s)\|_{\infty} \int_{0}^{t}e^{-\lambda(t-s)}ds+
\mu C\sup_{0\leq s\leq t} \|u(s)-\bar u(s)\|_{\infty} \int_{0}^{t}(t-s)^{-\frac{1}{2}}e^{-\lambda(t-s)}ds\nonumber\\
&\leq (\mu t+2\mu Ct^{\frac{1}{2}})\|( u, v) -(\bar u, \bar v)\|_{\mathcal{S}_{R,T}}.
\end{align*}
Hence, choose T small satisfying
$$
4RC\chi t^{\frac{1}{2}}+ (\lambda+a+2Rb)t+\mu t+2\mu Ct^{\frac{1}{2}}  < 1\quad \forall\,\, t\in [0,T],
$$
we have that $\Phi$ is a contraction map. Thus there is $T>0$ and a unique function $(u, v)\in \mathcal{S}_{R,T}$ such that
$(u(t),v(t))$ satisfies \eqref{mild-solution-eq} for $t\in [0,T]$.
Moreover, by Claim 2,  for every $0<\beta<\frac{1}{4}$, $\frac{1}{2}<\gamma<1$ such that $\gamma+2\beta<1$, the function $t\in(0,T]\to (u(t), v(t))\in X^{\beta}$ is locally H\"older continuous.
Clearly, $(u(t), v(t))$ is a mild solution of \eqref{Main-Eq} on $[0,T)$. %with $\alpha=0$ and $X^0=C_{\rm unif}^b(\R^N)\times C_{\rm unif}^{b,1}(\R^N)$.

\medskip

\noindent {\bf Claim 5.} {\it There is $T_{\max}\in (0,\infty]$ such that \eqref{Main-Eq} has a mild solution $(u(\cdot), v(\cdot))$ on $[0,T_{\max})$.
Moreover, for every $0<\beta<\frac{1}{4}$,  $\frac{1}{2}<\gamma<1$ such that $\gamma+2\beta<1$,  the function  $(0,T_{\max})\ni t\mapsto (u(\cdot), v(\cdot))\in X^\beta$ is locally H\"older continuous.  If $T_{\max}<\infty$, then
$$\limsup_{t \to T_{\max}} \big(  \left\| u(\cdot,t;u_0, v_0) \right\|_{\infty}+ \| v(\cdot,t;u_0, v_0) \|_{C_{\rm unif}^{b,1}(\R^N)} \big)=\infty.
$$}

\smallskip
This claim follows the regular extension arguments.

\medskip

\noindent \textbf{(ii) Regularity and non-negativity.} We  next prove that the mild solution $(u(\cdot), v(\cdot))$ of \eqref{Main-Eq} on $[0,T_{\max})$ obtained in (i) is a non-negative classical solution of \eqref{Main-Eq} on $[0,T_{\max})$ and satisfies \eqref{local-1-eq1}, \eqref{local-1-eq1-1}, \eqref{local-1-eq2} and \eqref{local-1-eq2-1}.

\medskip

In fact, it follows from Claim 2 and the fact $X_{1}^{\beta}$ is continuously embedded into $C^{b,\nu}_{\rm unif}(\R^N)$ for $0<\nu\ll 1$ that the mappings
$ t\to u(\cdot,t):=u(t)(\cdot)\in C^{b,\nu}_{\rm unif}(\R^N),\,\,\, t\mapsto v(\cdot,t):=v(t)(\cdot)\in C^{b,\nu}_{\rm unif}(\R^N)$
are locally H\"older continuous in $X$ for $t\in (0,T_{\max})$. By \cite[Lemma 3.3.2]{Dan Henry}, $v(x,t)$  is a classical solution of
 \begin{equation}
\label{v-abstract}
v_{t}=(\Delta -\lambda I) v+\mu u(x,t), \quad x\in\R^N,\,\, 0<t<T_{\max},
\end{equation}
and
$$t\mapsto v_t(\cdot, t)\in C^{b,\nu}_{\rm unif}(\R^N),\,\,\
 t\mapsto \frac{\partial v(\cdot,t)}{\partial x_i}\in C^{b,\nu}_{\rm unif}(\R^N),\,\,\,\,  t\mapsto \frac{\partial^2 v(\cdot,t)}{\partial x_i\partial x_j}\in C^{b,\nu}_{\rm unif}(\R^N)
$$
are also locally H\"older continuous in  $t\in (0,T_{\max})$. Then by the similar arguments to those in
the proof of \cite[Theorem 1.1]{SaSh1}, $(u(x,t),v(x,t))$ is a classical solution of \eqref{Main-Eq} on $(0,T_{\max})$
satisfying \eqref{local-1-eq1}, \eqref{local-1-eq1-1}, \eqref{local-1-eq2} and \eqref{local-1-eq2-1}.
Moreover,  since $u_0\ge 0$ and
$v_{0}\geq 0$, by comparison principle for parabolic equations, we get $u(x,t;u_0,v_0)\geq  0$ and $v(x,t;u_0,v_0)\geq  0$ for all $x\in\R$, $0\le t<T_{\max}$.

\medskip

\noindent {\bf (iii)  Uniqueness.} We now prove that for given $u_0\in C_{\rm unif}^b(\R^N)$, $v_0 \in C_{\rm unif}^{b,1}(\R^N)$ , \eqref{Main-Eq} has a unique classical solution
$(u(\cdot,\cdot;u_0, v_0),v(\cdot,\cdot;u_0, v_0))$ satisfying \eqref{local-1-eq1}, \eqref{local-1-eq1-1}, \eqref{local-1-eq2} and \eqref{local-1-eq2-1}.

\medskip

Any classical solution of \eqref{Main-Eq} satisfying the properties of Theorem \ref{Local-thm1} clearly satisfies the integral equation \eqref{mild-solution-eq}. Suppose that for given $u_0\in C_{\rm unif}^b(\R^N)$, $v_0 \in C_{\rm unif}^{b,1}(\R^N)$  with $u_0\ge 0$,  $v_0\ge 0$, $(u_1(x,t; u_0, v_0),v_1(x,t; u_0, v_0))$ and $(u_2(x,t; u_0, v_0),v_2(x,t; u_0, v_0))$ are two classical solutions of \eqref{Main-Eq} on $\R^N\times[0, T_{\max} )$ satisfying the properties of Theorem \ref{Local-thm1}. Let $0<T<T_{\max}$ be fixed. Thus $\sup_{0\leq t\leq T}(\|u_{1}(\cdot,t; u_0, v_0)\|_{\infty}+\|u_{2}(\cdot,t; u_0, v_0)\|_{\infty})<\infty$ and $\sup_{0\leq t\leq T}(\|v_{1}(\cdot,t; u_0, v_0)\|_{C_{\rm unif}^{b,1}(\R^N)}+\|v_{2}(\cdot,t; u_0, v_0)\|_{C_{\rm unif}^{b,1}(\R^N)}   )<\infty$. Let
$u_i(t)=u_i(\cdot,t; u_0, v_0)$ and $v_{i}(t)=v_i(\cdot,t; u_0, v_0)$ ($i=1,2$).  For every $t\in [0,T]$,  we have that
\begin{align*}%\label{u-equ}
\|u_{1}(t)-u_{2}(t)\|_{\infty}& \leq  \chi C\sup_{0\leq \tau\leq T}(\|\nabla v_1(\tau)\|_{\infty})    \int_{0}^{t}(t-s)^{-\frac{1}{2}}e^{-\lambda(t-s)}\| u_{1}(s)-u_{2}(s)\|_{\infty}ds\nonumber\\
& \,  +\chi C\sup_{0\leq \tau\leq T}(\| u_2(\tau)\|_{\infty}) \int_{0}^{t}(t-s)^{-\frac{1}{2}}e^{-\lambda(t-s)}\|\nabla (v_{2}(s)-v_{1}(s))\|_{\infty}ds \nonumber\\
& \, +(a+\lambda+b\sup_{0\leq \tau\leq T}(\|u_{1}(\tau)\|_{\infty}+\|u_{2}(\tau)\|_{\infty}) ) \int_{0}^{t}e^{-\lambda(t-s)}\|u_{1}(s)-u_{2}(s)\|_{\infty}ds,
\end{align*}
and
\begin{align*}%\label{v-equ}
\|\nabla (v_1(t)-v_2(t))\|_{\infty}&\leq \mu \int_0^t \|\nabla e^{(t-s)(\Delta-\lambda I)}(u_1(s)-u_2(s))\|_{\infty}ds\cr
&\leq \mu C\int_0^t (t-s)^{-\frac{1}{2}} e^{-\lambda (t-s)} \|u_1(s)-u_2(s)\|_{\infty}ds.
\end{align*}
Let $u(t)=u_1(t)-u_2(t)$, $v(t)=v_1(t)-v_2(t)$. We then have
\begin{align}\label{u+vequ}
\|u(t)\|_{\infty}+\|\nabla v(t)\|_{\infty} \leq  M\int_{0}^{t}(t-s)^{-\frac{1}{2}}\left(\|u(s)\|_{\infty}+\|\nabla v(s)\|_{\infty}\right)ds,
\end{align}
where
\begin{align*}
M=& \chi C\sup_{0\leq \tau\leq T}(\|\nabla v_1(\tau)\|_{\infty})+\chi C\sup_{0\leq \tau\leq T}(\| u_2(\tau)\|_{\infty})\\
&\, +   \left(a+\lambda
+ b\sup_{0\leq \tau\leq T}(\|u_{1}(\tau)\|_{\infty}+\|u_{2}(\tau)\|_{\infty})\right) \sqrt{T}
+\mu C <\infty.
\end{align*}
By Lemma \ref{Lem2}, we get $\|u(t)\|_{\infty}\equiv 0$. Thus,
$u_1(t)\equiv u_2(t)$ for all $0\leq t\leq T$.
Since $v(t)=\mu\int_0^t e^{(t-s)(\Delta-\lambda I)}u(s)ds $, then $v(t)\equiv 0$. Hence, $v_1(t)\equiv v_2(t)$ for all $0\leq t\leq T$.
Since $T<T_{\max}$ was arbitrary chosen, then $u_1(t)\equiv u_2(t)$, $v_1(t)\equiv v_2(t)$  for all $0\leq t< T_{\max}$. The theorem is thus proved.
\end{proof}

\subsection{Proof of Theorem \ref{Main-thm1}}

In this subsection, we prove Theorem \ref{Main-thm1}.
\begin{proof}[Proof of Theorem \ref{Main-thm1}]
Assume $b>\frac{N\mu\chi}{4}$.
Theorem \ref{Main-thm1} can be proved by properly modifying the arguments in \cite[Lemma 3.1]{win_JDE2014}.

First, we have
$$
\frac{1}{2}\frac{d}{dt}\left|\nabla v \right|^2=\sum_{i=1}^{N} v_{x_i}(v_{t})_{x_i}.
$$
From the second equation of \eqref{Main-Eq}, we have
\begin{align}\label{nabla-ineq}
\frac{1}{2}\frac{d}{dt}\left|\nabla v \right|^2=\sum_{i=1}^{N} v_{x_i}(\Delta v-\lambda v+\mu u)_{x_i}=\nabla v \cdot \nabla(\Delta v)-\lambda \left|\nabla v\right|^2+\mu \nabla v\cdot \nabla u.
\end{align}
 Note that $\nabla v \cdot \nabla(\Delta v)=\frac{1}{2}\Delta \left|\nabla v \right|^2-\left|D^2 v \right|^2$, \eqref{nabla-ineq} becomes
 \begin{equation}\label{Thm3-1}
 \frac{1}{2\mu}\frac{d}{dt}\left|\nabla v \right|^2 = \frac{1}{2\mu}\Delta \left|\nabla v \right|^2-\frac{1}{\mu}\left|D^2 v \right|^2-\frac{\lambda}{\mu}\left|\nabla v\right|^2+ \nabla v\cdot \nabla u.
 \end{equation}

 Next, multiplying the first equation of \eqref{Main-Eq} by $\frac{1}{\chi}$, we get
 \begin{equation}\label{Thm3-2}
\frac{1}{\chi} u_{t}=\frac{1}{\chi}\Delta u - \nabla u \cdot  \nabla v -u \Delta v+\frac{1}{\chi}u(a-bu).
 \end{equation}
By \eqref{Thm3-1} and \eqref{Thm3-2}, we get
\begin{equation}\label{Thm3-3}
\frac{d}{dt}\big[\frac{1}{\chi} u+ \frac{1}{2\mu}\left|\nabla v \right|^2 \big]=\Delta \big[ \frac{1}{\chi} u+\frac{1}{2\mu}\left|\nabla v \right|^2\big]-\frac{1}{\mu}\left|D^2 v \right|^2-\frac{\lambda}{\mu}\left|\nabla v\right|^2-u \Delta v+\frac{1}{\chi}u(a-bu).
\end{equation}
By Young's inequality, we have
$$
\left|u \Delta v\right|\leq \frac{N\mu}{4}u^2+\frac{1}{\mu}\left|D^2 v \right|^2.
$$
Combining this with \eqref{Thm3-3}, we have
\begin{align}
\frac{d}{dt}\big[\frac{1}{\chi} u+ \frac{1}{2\mu}\left|\nabla v \right|^2 \big]&\leq \Delta \big[ \frac{1}{\chi} u+\frac{1}{2\mu}\left|\nabla v \right|^2\big]-\frac{1}{\mu}\left|D^2 v \right|^2-\frac{\lambda}{\mu}\left|\nabla v\right|^2+\left|u \Delta v\right|+\frac{1}{\chi}u(a-bu)\cr
&\leq  \Delta \big[ \frac{1}{\chi} u+\frac{1}{2\mu}\left|\nabla v \right|^2\big]-\frac{\lambda}{\mu}\left|\nabla v\right|^2+\frac{N\mu}{4}u^2+\frac{1}{\chi}u(a-bu)\cr
&=\Delta \big[ \frac{1}{\chi} u+\frac{1}{2\mu}\left|\nabla v \right|^2\big]-2\lambda\big[\frac{1}{\chi}u+\frac{1}{2\mu}\left|\nabla v\right|^2\big]+\frac{2\lambda}{\chi}u +\frac{N\mu}{4}u^2+\frac{1}{\chi}u(a-bu)\cr
&=\Delta \big[ \frac{1}{\chi} u+\frac{1}{2\mu}\left|\nabla v \right|^2\big]-2\lambda\big[\frac{1}{\chi}u+\frac{1}{2\mu}\left|\nabla v\right|^2\big]
-\frac{1}{\chi}(b-\frac{N\mu\chi}{4} )\big(u-\frac{2(2\lambda+a)}{4b-N\mu\chi  } \big)^2\cr
&\,\,\,\, +\frac{1}{\chi}(b-\frac{N\mu\chi}{4} )\frac{4(2\lambda+a)^2}{(4b-N\mu\chi)^2}.
\end{align}
 Since $b>\frac{N\mu\chi}{4}$ , then for $0<t<T_{max}$, we have
 \begin{equation}
 \frac{d}{dt}\big[\frac{1}{\chi} u+ \frac{1}{2\mu}\left|\nabla v \right|^2 \big] \leq
 \Delta \big[ \frac{1}{\chi} u+\frac{1}{2\mu}\left|\nabla v \right|^2\big] -2\lambda\big[\frac{1}{\chi}u+\frac{1}{2\mu}\left|\nabla v\right|^2\big]
+\frac{(2\lambda+a)^2}{\chi (4b-N\mu\chi)}.
\end{equation}
By the comparison principle for parabolic equations, we have
\begin{equation}\label{u-nablav-bound}
\frac{1}{\chi}u+\frac{1}{2\mu}\left|\nabla v \right|^2\leq \max\{\frac{1}{\chi}\|u_0\|_\infty+\frac{1}{2\mu}\|\nabla v_0\|_\infty^2, \frac{(2\lambda+a)^2}{2\lambda\chi(4b-N\mu\chi)}\} \quad \forall \,\, 0\leq t<T_{max}, x\in\R^{N}.
\end{equation}
Let $M=\max\{\frac{1}{\chi}\|u_0\|_\infty+\frac{1}{2\mu}\|\nabla v_0\|_\infty^2, \frac{(2\lambda+a)^2}{2\lambda\chi(4b-N\mu\chi)}\}$, then
\begin{equation}\label{u-bound}
u(x,t; u_0,v_0)\leq \chi M \quad \forall \,\, 0\leq t<T_{max}, x\in\R^{N},
\end{equation}
and
\begin{equation}\label{nabla-v-bound}
|\nabla v(x,t;u_0,v_0)|\leq \sqrt{2\mu M} \quad \forall \,\, 0\leq t<T_{max}, x\in\R^{N}.
\end{equation}
From the second equation of \eqref{Main-Eq}, by the variation of constant formula,
$$
v(\cdot,t;u_0,v_0)=e^{t(\Delta-\lambda I)}v_0+\mu\int_0^t e^{(t-s)(\Delta-\lambda I)}u(s)ds.
$$
Thus,
\begin{align}\label{v-bound}
\|v(\cdot,t;u_0,v_0)\|_{\infty}&\leq e^{-\lambda t}\|v_0\|_{\infty}+\mu\int_0^t e^{-\lambda(t-s)}\|u(s)\|_{\infty}ds\cr
&\leq e^{-\lambda t}\|v_0\|_{\infty}+\mu\chi M\int_0^t e^{-\lambda(t-s)}ds\cr
&\leq \|v_0\|_{\infty}+\frac{\mu \chi M}{\lambda} \quad \forall \,\, 0\leq t<T_{\max}, x\in\R^{N}.
\end{align}
In view of \eqref{u-bound}, \eqref{nabla-v-bound} and \eqref{v-bound}, we obtain that $\limsup_{t\to T_{\max}}\|u(\cdot,t;u_0, v_0)\|_{\infty}$ is finite and that $\limsup_{t\to T_{\max}}\|v(\cdot,t;u_0,v_0)\|_{C_{\rm unif}^{b,1}(\R^N)}$ is also finite.
Therefore, it follows by the blow-up criterion that $T_{\max}=\infty$ and the solution $(u(x,t; u_0,v_0), v(x,t; u_0,v_0))$ is bounded for $(x,t)\in \R^N\times(0,\infty) $. Thus, \eqref{u-bdd-1} holds by the comparison principle for parabolic equations.

\smallskip

{
Finally, we prove \eqref{u-bdd-2}. To this end, let $U(x,t)=u(x, t; u_0, v_0)-\frac{a}{b}$, $V(x,t)=v(x, t; u_0, v_0)-\frac{\mu}{\lambda}\frac{a}{b}$. Then $(U, V)$ solves
\begin{equation}\label{U-V-eq}
\begin{cases}
U_{t}=\Delta U -\chi \nabla\cdot  ( u(x,t;u_0,v_0)\nabla V) -aU-bU^2,\quad  x\in\R^N,\,\,\, t>0, \\
V_t=\Delta V -\lambda V+\mu U,\quad x\in\R^N,\,\,\, t>0,\\
U(x,0)=u_0(x)-\frac{a}{b},\quad x\in\R^N, \\
V(x,0)=v_0(x)-\frac{\mu}{\lambda}\frac{a}{b}, \quad x\in\R^N.
\end{cases}
\end{equation}
%We first present some lemmas on the estimates of $U$ and $V$. The first lemma provides some estimates for
%$\|U(\cdot,t)+ \frac{\chi}{2\mu}\left|\nabla V(\cdot,t) \right|^2\|_\infty$ and $\|u(\cdot,t;u_0,v_0)\|$.
%There hold
%\begin{equation}
%\label{U-V-estimate}
%\limsup_{t\to\infty}\|U(\cdot,t)+ \frac{\chi}{2\mu}\left|\nabla V(\cdot,t) \right|^2\|_\infty \le \frac{aN\mu\chi}{b(4b-N\mu\chi)}\le \frac{a}{b(1-\theta)}
%\end{equation}{\color{blue}\textbf{(I think we can not get \eqref{U-V-estimate}, because $U(\cdot,t)$ may be negative. \eqref{U-V-estimate} should be replaced by
%$$
%\limsup_{t\to\infty}\|U_{+}(\cdot, t)\|_{\infty}\leq \frac{ a\theta}{b(1-\theta)}.
%$$
%)}}
%and
%\begin{equation}\label{u-estimate}
%\limsup_{t\to\infty}\|u(\cdot, t; u_0,v_0)\|_{\infty}\leq \frac{a}{b(1-\theta)}.
%\end{equation}

By \eqref{U-V-eq} and the similar arguments as those used in deriving \eqref{Thm3-3}, we have
\begin{equation}\label{New-U-equ}
\frac{d}{dt}\big[U+ \frac{\chi}{2\mu}\left|\nabla V \right|^2\big]=
\Delta \big[ U+\frac{\chi}{2\mu}\left|\nabla V \right|^2\big]-\chi u \Delta V-aU-bU^2
-\frac{\chi}{\mu}\left|D^2 V \right|^2-\frac{\chi\lambda}{\mu}\left|\nabla V\right|^2.
\end{equation}
It follows from Young's inequality and $U=u-\frac{a}{b}$ that
\begin{align}\label{New-U-V-ineq}
\left|u \Delta V\right|\leq \frac{N\mu}{4}u^2+\frac{1}{\mu}\left|D^2 V \right|^2=\frac{N \mu}{4}U^2+\frac{N\mu a}{2b}U+\frac{a^2N\mu}{4b^2}+\frac{1}{\mu}\left|D^2 V \right|^2.
\end{align}
By \eqref{New-U-V-ineq}, $b>\frac{N\chi \mu}{4}$, and $\lambda\geq\frac{a}{2}$, we have
\begin{align}
\frac{d}{dt}\big[U+ \frac{\chi}{2\mu}\left|\nabla V \right|^2 \big]&\le \Delta \big[ U+\frac{\chi}{2\mu}\left|\nabla V \right|^2\big]-a\big( U+\frac{\chi}{2\mu}\left|\nabla V \right|^2\big)+\frac{a^2N\mu\chi}{b(4b-N\mu\chi)}.
\end{align}
This together with  the comparison principle for parabolic equations and $4b>N\chi\mu$ implies
\begin{equation}\label{U-V-estimate}
\limsup_{t\to\infty}\sup_{x\in\R^N}\big(U(\cdot,t)+ \frac{\chi}{2\mu}\left|\nabla V(\cdot,t) \right|^2\big) \le \frac{aN\mu\chi}{b(4b-N\mu\chi)}.
\end{equation}
We then have
\begin{equation}\label{U-posi-bdd}
\limsup_{t\to\infty}\|U_{+}(\cdot, t)\|_{\infty}\leq \frac{ aN\mu\chi}{b(4b-N\mu\chi)}.
\end{equation}
and
%\begin{equation}
%\label{U-V-estimate}
%\limsup_{t\to\infty}\|U(\cdot,t)+ \frac{\chi}{2\mu}\left|\nabla V(\cdot,t) \right|^2\|_\infty \le \frac{aN\mu\chi}{b(4b-N\mu\chi)}\le \frac{a}{b(1-\theta)}
%\end{equation}{\color{blue}\textbf{(I think we can not get \eqref{U-V-estimate}, because $U(\cdot,t)$ may be negative. \eqref{U-V-estimate} should be replaced by
%$$
%\limsup_{t\to\infty}\|U_{+}(\cdot, t)\|_{\infty}\leq \frac{ a\theta}{b(1-\theta)}.
%$$
\begin{align*}
\limsup_{t\to\infty}\|u(\cdot, t; u_0, v_0)\|_{\infty}\leq \frac{a}{b}+\frac{aN\mu\chi}{b(4b-N\mu\chi)}=\frac{4a}{4b-N\mu\chi}.
\end{align*}
Hence \eqref{u-bdd-2} holds and the theorem is thus proved.
}
\end{proof}

\section{Persistence}

In this section, we investigate the persistence of  global classical solutions of \eqref{Main-Eq} with strictly positive initial functions $u_0$ and prove Theorem \ref{Main-thm3}.
Throughout this section, unless specified otherwise,
$C$ denotes a generic  constant
independent of $u,v$ and may be different at different places. Recall that $X_1=C_{\rm unif}^b(\R^N)$, $X_2= C_{\rm unif}^{b,1}(\R^N)$. Let
$$
X_1^+=\{u\in X_1\,|\, u\ge 0\},\quad X_2^+=\{v\in X_2\,|\, v\ge 0\}.
$$
We first prove a lemma.

\begin{lem}
\label{persistence-lm}
Suppose that $b>\frac{N\chi\mu}{4}$. There are  $M>0$, $M_1>0$, and $0<\theta<\frac{1}{2}$ such that for any $(u_0,v_0)\in X_1^+\times X_2^+$, there is
$T_0(u_0,v_0)>1$ satisfying that
\begin{equation}
\label{condition-on-initial-eq}
\begin{cases}
\|u(\cdot,t;u_0,v_0)\|_\infty\le M\quad \forall \, t\ge T_0(u_0,v_0)\cr
 \|v(\cdot,t;u_0,v_0)\|_\infty\le M\quad \forall\, t\ge T_0(u_0,v_0)\cr
\|\nabla v(\cdot,t;u_0,v_0)\|_\infty\le M\quad \forall\, t\ge T_0(u_0,v_0)\cr
 \|\Delta v(\cdot,t;u_0,v_0)\|_\infty\le M\quad \forall\, t\ge T_0(u_0,v_0)
 \end{cases}
 \end{equation}
 and
 \begin{equation}
 \label{condition-on-initial-eq-0}
 \sup_{t,s\ge T_0(u_0,v_0)+1, t\not =s} \frac{\|\nabla v(\cdot,t;u_0,v_0)-\nabla v(\cdot,s;u_0,v_0)\|_\infty}{|t-s|^\theta}
 \le M M_1.
\end{equation}
 \end{lem}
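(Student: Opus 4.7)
The plan is to bootstrap from Theorem \ref{Main-thm1}. That result already provides a time $T^{(1)}_0(u_0,v_0)$ after which $\|u(\cdot,t;u_0,v_0)\|_\infty$ is dominated by a universal constant, say $M_0$, depending only on $a,b,\lambda,\mu,\chi,N$. To transfer this bound to $v$, I will use the variation of constants formula together with \eqref{Lp-Lq-1}: for $t\ge T^{(1)}_0$,
\[
v(\cdot,t)=e^{(t-T^{(1)}_0)(\Delta-\lambda I)}v(\cdot,T^{(1)}_0)+\mu\int_{T^{(1)}_0}^{t}e^{(t-s)(\Delta-\lambda I)}u(\cdot,s)\,ds,
\]
which gives $\|v(\cdot,t)\|_\infty\le e^{-\lambda(t-T^{(1)}_0)}\|v(\cdot,T^{(1)}_0)\|_\infty+\mu M_0/\lambda$; the first term drops below $1$ once $t$ exceeds a further data-dependent threshold $T^{(2)}_0$. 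Shifting the base point to $t-1$ in the same formula and applying \eqref{Lp-Lq-2} then yields
\[
\|\nabla v(\cdot,t)\|_\infty \le C_N e^{-\lambda}\|v(\cdot,t-1)\|_\infty + C_N\mu M_0\int_0^{1}r^{-1/2}e^{-\lambda r}\,dr
\]
for all $t\ge T^{(2)}_0+1$, producing a universal bound on $\|\nabla v\|_\infty$.

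The serious obstacle is the $L^\infty$-bound on $\Delta v$: the naive semigroup approach fails, since $\|\Delta e^{\tau(\Delta-\lambda I)}\|$ is controlled only by $C\tau^{-1}$, which is not integrable near $\tau=0$. I will therefore first upgrade $u$ to a uniformly H\"older function and then feed this into Schauder theory. Rewrite the first equation of \eqref{Main-Eq} in divergence form,
\[
u_t = \partial_{x_i}\bigl(\partial_{x_i}u - \chi\, u\,\partial_{x_i}v\bigr) + u(a-bu),
\]
so that the leading coefficient is the identity, the first-order coefficients $-\chi\partial_{x_i}v$ are uniformly bounded by the $\nabla v$-estimate just obtained, and the right-hand side $u(a-bu)$ is uniformly bounded. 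The classical De Giorgi--Nash--Moser H\"older estimate for divergence-form parabolic equations then supplies some $\alpha\in(0,1)$ and a constant, depending only on these universal bounds, such that $u$ is $(\alpha,\alpha/2)$-H\"older continuous on every unit parabolic cylinder contained in $\R^N\times[T^{(2)}_0+1,\infty)$.

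With uniform H\"older regularity of $u$ in hand, classical interior Schauder estimates applied to the linear equation $v_t - \Delta v + \lambda v = \mu u$ deliver $v\in C^{2+\alpha,1+\alpha/2}$ with a universal constant. This supplies the outstanding $\|\Delta v(\cdot,t)\|_\infty \le M$ estimate and simultaneously shows that $\nabla v$ is H\"older continuous in time with exponent $(1+\alpha)/2>1/2$ on every unit time-interval. For any chosen $\theta\in(0,1/2)$ and any $t,s\ge T_0(u_0,v_0):=T^{(2)}_0+2$, split the desired H\"older-in-time bound on $\nabla v$ into the case $|t-s|<1$ (handled by the Schauder estimate, since $(1+\alpha)/2>\theta$) and the case $|t-s|\ge 1$ (handled trivially by the uniform $L^\infty$-bound on $\nabla v$); this yields \eqref{condition-on-initial-eq-0}. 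All constants other than $T_0(u_0,v_0)$ itself are universal, as required.
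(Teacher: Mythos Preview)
Your argument is correct and takes a genuinely different route from the paper. The paper stays entirely within the analytic-semigroup framework: it obtains the eventual $\|u\|_\infty$ and $\|\nabla v\|_\infty$ bounds directly from the comparison inequality \eqref{u-nablav-bound} rather than bootstrapping $\nabla v$ from $v$, and then, to reach $\Delta v$, it first shows that $\|(\lambda I-\Delta)^\beta u(\cdot,t)\|_\infty$ is eventually universally bounded for some $\beta\in(0,\tfrac12)$ (via the variation-of-constants formula and Lemma~\ref{L_Infty bound 2}), feeds this into a second variation-of-constants step to bound $\|(\lambda I-\Delta)^\gamma v(\cdot,t)\|_\infty$ for some $\gamma\in(1,\tfrac32)$, and invokes the embedding $X^\gamma\hookrightarrow C^2$. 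The temporal H\"older estimate for $\nabla v$ is then simply read off from the computation already carried out in Claim~2 of Theorem~\ref{Local-thm1}. Your alternative via De~Giorgi--Nash--Moser on the divergence-form $u$-equation followed by interior parabolic Schauder on the linear $v$-equation is more classical and arguably cleaner: it bypasses fractional powers altogether and even delivers a time-H\"older exponent $(1+\alpha)/2>\tfrac12$ for $\nabla v$, stronger than what the lemma needs. The paper's approach has the advantage of being self-contained within the machinery it has already set up, with constants that are in principle explicit; yours trades this for a shorter black-box appeal to standard linear regularity theory.
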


\begin{proof}
First, we obtain the upper bound for $\|u(\cdot,t;u_0,v_0)\|_\infty$ and
$\|\nabla v(\cdot,t;u_0,v_0)\|_\infty$.
By \eqref{u-nablav-bound}, it holds that
$$
\limsup_{t\to\infty}\|u(\cdot,t; u_0, v_0)\|_{\infty}\leq \frac{(2\lambda+a)^2}{2\lambda(4b-N\mu\chi)}
$$
and
$$
\limsup_{t\to\infty}\|\nabla v(\cdot,t; u_0, v_0)\|_{\infty}\leq \sqrt{\frac{\mu (2\lambda+a)^2}{\lambda \chi (4b-N\mu\chi)}}.
$$
It thus follows that there exists $T_1=T_1(u_0,v_0)$ such that
\begin{equation}\label{u-bdd}
\|u(\cdot,t; u_0,v_0)\|_{\infty}\leq \frac{(2\lambda+a)^2}{\lambda(4b-N\mu\chi)}  \quad\,\ \forall\,\ t\geq T_1
\end{equation}
and
\begin{equation}\label{nablav-bdd}
\|\nabla v(\cdot,t;u_0,v_0)\|_{\infty}\leq 2\sqrt{\frac{\mu (2\lambda+a)^2}{\lambda \chi (4b-N\mu\chi)}} \quad\,\ \forall\,\ t\geq T_1.
\end{equation}

Next, we obtain the upper bound for $\|v(\cdot,t;u_0,v_0)\|_\infty$.
By the variation of constant formula, we have that
$$
v(\cdot,t; u_0, v_0)=e^{(t-T_1)(\Delta-\lambda I)}v(\cdot, T_1; u_0, v_0)+\mu\int_{T_1}^t e^{(t-s)(\Delta-\lambda I)}u(\cdot, s; u_0, v_0)ds \quad \forall\,\ t\geq T_1.
$$
Then by \eqref{Lp-Lq-1} and \eqref{u-bdd},
\begin{align*}
\|v(\cdot,t; u_0, v_0)\|_{\infty}&\leq e^{-\lambda (t-T_1)}\|v(\cdot, T_1; u_0, v_0)\|_{\infty}+\mu \int_{T_1}^t e^{-\lambda(t-s)}\|u(\cdot, s; u_0, v_0)\|_{\infty}ds\cr
&\leq  e^{-\lambda (t-T_1)}\|v(\cdot, T_1; u_0, v_0)\|_{\infty}+\frac{\mu(2\lambda+a)^2}{\lambda^2(4b-N\mu\chi)}\quad \forall\,\ t\geq T_1.
\end{align*}
This implies that
$$
\limsup_{t\to\infty}\|v(\cdot,t; u_0, v_0)\|_{\infty}\leq \frac{\mu(2\lambda+a)^2}{\lambda^2(4b-N\mu\chi)}.
$$
It thus follows that there exists $T_2=T_2(u_0,v_0)>T_1$ such that
\begin{equation}\label{v-bdd}
\|v(\cdot,t; u_0,v_0)\|_{\infty}\leq \frac{2\mu(2\lambda+a)^2}{\lambda^2(4b-N\mu\chi)}  \quad\,\ \forall\,\ t\geq T_2.
\end{equation}

Now, we obtain the upper bound for
$\|\Delta v(\cdot,t;u_0,v_0)\|_\infty$. To this end,
first,  we write $u(x,t)$ and $v(x,t)$ for $u(x,t;u_0,v_0)$ and $v(x,t;u_0,v_0)$.
By the variation of constant formula again, we have
\begin{align*}
u(\cdot,t)=&\underbrace{e^{(t-T_1)(\Delta-\lambda I)}u(\cdot, T_1)}_{I_1}-\underbrace{\chi\int_{T_1}^t e^{(t-s)(\Delta-\lambda I)}\nabla\cdot(u(\cdot, s)\nabla v(\cdot, s))ds}_{I_2}\\
&\,\, +\underbrace{\int_{T_1}^t e^{(t-s)(\Delta-\lambda I)}u(\cdot, s)(a+\lambda-bu(\cdot, s))ds}_{I_3}
\end{align*}
for $t\ge T_1$.
Let $A^{\beta}=(\lambda I-\Delta)^{\beta}$ for $0<\beta<\frac{1}{2}$. By \eqref{Lp-Lq-3}, Lemma \ref{L_Infty bound 2}, \eqref{u-bdd} and \eqref{nablav-bdd}, we have
\begin{equation}\label{u-01-01}
\|A^\beta I_1\|_{\infty}=\|A^{\beta} e^{(t-T_1)(\Delta-\lambda I)}u(\cdot, T_1)\|_{\infty}\leq C(t-T_1)^{-\beta}e^{-\lambda (t-T_1)}\|u(\cdot, T_1)\|_{\infty} \quad \forall\,\ t\geq T_1,
\end{equation}
\begin{align}\label{u-02-02}
\|A^\beta I_2\|_\infty&\le \chi\int_{T_1}^{t}{\|A^{\beta}e^{(t-s)(\Delta-\lambda I)}\nabla\cdot (u(\cdot, s)\nabla v(\cdot, s))ds} \|_{\infty}ds \cr
&\leq  \chi C\int_{T_1}^{t}(t-s)^{-\beta-\frac{1}{2}}e^{-\lambda(t-s)}\|u(\cdot, s)\|_{\infty}\|\nabla v(\cdot, s)\|_{\infty}ds\nonumber\\
%&\leq  \chi C\int_{t_2}^{t}(t-s)^{-\beta-\frac{1}{2}}e^{-\lambda(t-s)}ds\nonumber\\
&\leq \chi C \lambda^{\beta-\frac{1}{2}} \frac{(2\lambda+a)^2}{\lambda(4b-N\mu\chi)} \sqrt{\frac{\mu (2\lambda+a)^2}{\lambda \chi (4b-N\mu\chi)}}  \quad \forall\,\ t\geq T_1,
\end{align}
and
\begin{align}\label{u-03-03}
\|A^\beta I_3\|_\infty &\le \int_{T_1}^t \|A^{\beta}e^{(t-s)(\Delta-\lambda I)}u(\cdot, s)(a+\lambda-bu(\cdot, s))\|_{\infty}ds\cr
&\leq C \int_{T_1}^{t}(t-s)^{-\beta}e^{-\lambda(t-s)}\big((a+\lambda)\|u(\cdot,s)\|_{\infty}+b\|u(\cdot,s)\|_{\infty}^{2}\big)ds\cr
%&\leq C\big((a+\lambda)\chi M+b\chi^2M^2 \big)\int_{0}^{t}(t-s)^{-\beta}e^{-\lambda(t-s)} ds\cr
&\leq C\big((a+\lambda)\frac{(2\lambda+a)^2}{\lambda(4b-N\mu\chi)}+b\frac{(2\lambda+a)^4}{\lambda^2(4b-N\mu\chi)^2} \big)\lambda^{\beta-1} \quad \forall\,\ t\geq T_1.
\end{align}
By \eqref{u-01-01}, \eqref{u-02-02}, and \eqref{u-03-03},  there exists $T_3:=T_3(u_0,v_0)\geq T_1$ such that
\begin{align}\label{A-beta-u-2}
\|A^{\beta}u(\cdot,t)\|_{\infty}&\leq %C(t-T_1)^{-\beta}e^{-\lambda (t-T_1)}\|u(T_1,\cdot)\|_{\infty}\cr
2\chi C \lambda^{\beta-\frac{1}{2}} \frac{(2\lambda+a)^2}{\lambda(4b-N\mu\chi)} \sqrt{\frac{\mu (2\lambda+a)^2}{\lambda \chi (4b-N\mu\chi)}} \cr
&+2C\big((a+\lambda)\frac{(2\lambda+a)^2}{\lambda(4b-N\mu\chi)}+b\frac{(2\lambda+a)^4}{\lambda^2(4b-N\mu\chi)^2} \big)\lambda^{\beta-1} \quad \forall\,\ t\geq T_3.
\end{align}

Fix $\gamma\in(1,\frac{3}{2})$, and then choose $\beta$ such that $\gamma-1<\beta<\frac{1}{2}$. Note that
there is a constant $C$ independent of $u_0,v_0$ and $t$ such that
\begin{equation}\label{C2-embeding-1}
\|v(\cdot,t;u_0,v_0)\|_{C^2}\leq C \|v(\cdot,t;u_0,v_0)\|_{X^{\gamma}}.
\end{equation}
By the variation of constant formula again,
$$
v(\cdot,t;u_0,v_0)=\underbrace{e^{(t-T_3)(\Delta-\lambda I)}v(\cdot, T_3;u_0,v_0)}_{J_1}+
\underbrace{\mu\int_{T_3}^{t}e^{(t-s)(\Delta-\lambda I)}u(\cdot, s;u_0,v_0)ds}_{J_2} \quad \forall\,\ t\geq T_3.
$$
Note that
\begin{align}\label{A-gamma-u-1}
\|A^\gamma J_1\|_\infty&=\|A^{\gamma}e^{(t-T_3)(\Delta-\lambda I)}v(\cdot, T_3,;u_0,v_0)\|_{\infty}\cr
&\leq C(t-T_3)^{-\gamma}e^{-\lambda(t-T_3)}\|v(\cdot, T_3;u_0,v_0)\|_{\infty},
%&\leq 2C_{\gamma}(t- \tilde T_0)^{-\gamma}e^{-\lambda(t-\tildet_0)}\|v_0\|_{\infty}
\end{align}
%Moreover, by \eqref{Lp-Lq-3} and \eqref{A-beta-bd}, we have
and
\begin{align}\label{A-gamma-u-2}
\|A^\gamma J_2\|_\infty&\le \mu\int_{T_3}^{t}\|A^{\gamma}e^{(t-s)(\Delta-\lambda I)}u(\cdot, s;u_0,v_0)\|_{\infty}ds\cr
&=\mu\int_{T_3}^{t}\|A^{\gamma-\beta}e^{(t-s)(\Delta-\lambda I)}A^{\beta}u(\cdot, s;u_0,v_0)\|_{\infty}ds\cr
&\leq \mu C\int_{T_3}^{t}(t-s)^{-(\gamma-\beta)}e^{-\lambda(t-s)}\|A^{\beta}u(\cdot, s;u_0,v_0)\|_{\infty}ds\cr
%&\leq \mu C\sup_{t\ge T_3}\|A^{\beta}u(\cdot,t;u_0,v_0)\|_{\infty}\int_{T_3}^{t}(t-s)^{-(\gamma-\beta)}e^{-\lambda(t-s)}ds\cr
&\leq \mu C\lambda^{\gamma-\beta-1}\sup_{t\ge T_3}\|A^{\beta}u(\cdot,t;u_0,v_0)\|_{\infty}.
\end{align}
It follows from \eqref{A-beta-u-2}, \eqref{C2-embeding-1}, \eqref{A-gamma-u-1}, and \eqref{A-gamma-u-2}  that
\begin{align}\label{delta-v}
\limsup_{t\to\infty}\|v(\cdot,t;u_0,v_0)\|_{C^2}&\leq
\mu C\lambda^{\gamma-\beta-1}
\bigg(\chi  \lambda^{\beta-\frac{1}{2}} \frac{(2\lambda+a)^2}{\lambda(4b-N\mu\chi)} \sqrt{\frac{\mu (2\lambda+a)^2}{\lambda \chi (4b-N\mu\chi)}} \cr
&\,\,\,+\big((a+\lambda)\frac{(2\lambda+a)^2}{\lambda(4b-N\mu\chi)}+b\frac{(2\lambda+a)^4}{\lambda^2(4b-N\mu\chi)^2} \big)\lambda^{\beta-1}\bigg).
\end{align}
By \eqref{u-bdd}, \eqref{nablav-bdd}, \eqref{v-bdd}, and \eqref{delta-v}, there are $M>0$ and
$T_0(u_0,v_0)>1$ such that \eqref{condition-on-initial-eq} holds.

Finally, by the arguments in Claim 2 of Theorem \ref{Local-thm1}, there are
$0<\theta<\frac{1}{2}$ and $M_1>1$ such that for any $(u_0,v_0)\in X_1^+\times X_2^+$ satisfying \eqref{condition-on-initial-eq},
\eqref{condition-on-initial-eq-0} holds.
\end{proof}

Throughout the rest of this section, $M$ and $M_1$ are as in Lemma \ref{persistence-lm}.  $L_0\ge  1$  is a fixed positive number  such that $\lambda_0>0$, where
$\lambda_0$ is the principal eigenvalue of
\begin{equation}
\begin{cases}\label{eigen-eq}
\Delta \phi +\frac{a}{2}\phi=\lambda \phi, \quad x\in B_{L_0}(0),\\
\phi(x)=0, \quad x\in\partial B_{L_0}(0).
\end{cases}
\end{equation}

We now prove  Theorem \ref{Main-thm3}.

\begin{proof}[Proof of Theorem \ref{Main-thm3}]
We divide the proof into four steps (see the main idea of the proof in Remark 3 in the introduction).

\medskip

\noindent {\bf Step 1.} %For given $R>0$ and $x_0\in\R^N$, let $B_{R}(x_0)=\{x\in\R^N\,|\, |x-x_0|<R\}$.
 In this step, we  prove that {\it there is $\epsilon_0>0$ such that for any $0<\epsilon\le \epsilon_0$,  any
$(u_0,v_0)\in X_1^+\times X_2^+$, any $x_0\in \R^N$, and any $t_1,t_2$ satisfying $T_0(u_0,v_0)\le t_1< t_2\le  \infty$, if
$$
\sup_{x\in{B}_{2L(\epsilon)}(x_0)} u(x,t;u_0,v_0)\le \epsilon\quad \forall\, t_1\le t< t_2,
$$
then
\begin{equation}
\label{v-dv-est}
\sup_{x\in{B}_{L(\epsilon)}(x_0)}\max\{ v(x,t;u_0,v_0),|\p_{x_i} v(x,t;u_0,v_0)|\}\leq \tilde M\epsilon\quad \forall\, t_1+T(\epsilon)\le t< t_2,\,\, i=1,2,\cdots,N,
\end{equation}
 and
 \begin{equation}
 \label{ddv-dtv-est}
 \chi \sup_{x\in B_{L(\epsilon)}(x_0)} \sum_{i,j=1}^N|\p_{x_ix_j} v(x,t;u_0,v_0)|\le \frac{a}{4}\quad \forall\, t_1+T(\epsilon)+1\le t<t_2,
 \end{equation}
 where
 $$
 \tilde M=\max\big \{  1+\frac{\mu M}{\lambda \pi^{\frac{N}{2}}}+\frac{\mu}{\lambda},\,\,  1+\frac{ \mu}{\pi^{\frac{N}{2}}}\lambda^{-\frac{1}{2}}\Gamma(\frac{1}{2})M+\frac{ \mu}{\pi^{\frac{N}{2}}}\lambda^{-\frac{1}{2}}\Gamma(\frac{1}{2})\big \};
 $$
 $T=T(\epsilon)\ge 1$ is such that
\begin{equation}\label{T-choice}
 e^{-\lambda T}M\leq \epsilon;
\end{equation}
 and $L=L(\epsilon)\geq L_0$ is such that
\begin{equation}\label{L-choice}
\max\{\int_{\R^{N}\backslash {B}_{\frac{L}{2\sqrt{2T}}}(0)}e^{-|z|^2}dz, \int_{\R^{N}\backslash {B}_{\frac{L}{2\sqrt{2T}}}(0)}|z|e^{-|z|^2}dz\}\leq \epsilon.
\end{equation} }

\medskip

We first prove that \eqref{v-dv-est} holds for any $\epsilon>0$.
Fix $t_1\geq T_0(u_0,v_0)$. Note that
 \begin{align*}
v(x,t;u_0,v_0)&=\int_{\R^N}\frac{e^{-\lambda (t-t_1)}}{(4\pi (t-t_1))^{\frac{N}{2}}}e^{-\frac{|x-y|^{2}}{4(t-t_1)}}v(y, t_1;u_0,v_0)dy   \cr
&+\mu \int_{t_1}^{t}\int_{\R^N}\frac{e^{-\lambda (t-s)}}{(4\pi (t-s))^{\frac{N}{2}}}e^{-\frac{|x-y|^{2}}{4(t-s)}}u(y, s;u_0,v_0)dyds\cr
&=\frac{1}{\pi^{\frac{N}{2}}}\int_{\R^N}e^{-\lambda (t-t_1)}e^{-|z|^2}v(x+2\sqrt{t-t_1}z, t_1;u_0,v_0)dz   \cr
&+\frac{\mu}{\pi^{\frac{N}{2}}} \int_{t_1}^{t}\int_{\R^N}e^{-\lambda (t-s)}e^{-|z|^2}u(x+2\sqrt{t-s}z, s;u_0,v_0)dzds,
\end{align*}
and
\begin{align}\label{v-deriv-est}
\partial_{x_i}v(x,t;u_0,v_0)&=\int_{\R^N}\frac{(y_i-x_i)e^{-\lambda (t-t_1)}}{2(t-t_1)(4\pi (t-t_1))^{\frac{N}{2}}}e^{-\frac{|x-y|^{2}}{4(t-t_1)}}v(y, t_1;u_0,v_0)dy \cr
&+\mu \int_{t_1}^{t}\int_{\R^N}\frac{(y_i-x_i)e^{-\lambda (t-s)}}{2(t-s)(4\pi (t-s))^{\frac{N}{2}}}e^{-\frac{|x-y|^{2}}{4(t-s)}}u(y, s;u_0,v_0)dyds\cr
&=\frac{1}{\pi^{\frac{N}{2}}}(t-t_1)^{-\frac{1}{2}}e^{-\lambda (t-t_1)}\int_{\R^N}ze^{-z^2}v(x+2\sqrt{t-t_1}z, t_1;u_0,v_0)dz\cr
&+\frac{\mu}{\pi^{\frac{N}{2}}} \int_{t_1}^{t}\int_{\R^N}(t-s)^{-\frac{1}{2}}e^{-\lambda (t-s)}ze^{-z^2}u(x+2\sqrt{t-s}z, s;u_0,v_0)dzds.
\end{align}
Hence, for any $x_0\in\R^N$, $x\in {B}_{L}(x_0)$, and $t_1+T\leq t\leq \min\{t_1+2T,t_2\}$, we have
\begin{align*}
v(x,t;u_0,v_0)&\leq e^{-\lambda T}M+\frac{ \mu}{\pi^{\frac{N}{2}}} \left[\int_{t_1}^{t}\int_{\R^N\backslash {B}_{\frac{L}{2\sqrt{2T}}}(0)}e^{-\lambda (t-s)}e^{-|z|^2}dzds\right]M \cr
&+\frac{ \mu}{\pi^{\frac{N}{2}}} \left[\int_{t_1}^{t}\int_{{B}_{ \frac{L}{2\sqrt{2T}}}(0)}e^{-\lambda (t-s)}e^{-|z|^2}dzds\right]\sup_{t_1\leq t< t_2, z\in {B}_{2L}(x_0)}u(z, t;u_0,v_0).
%& +\frac{ \mu}{\pi^{\frac{N}{2}}} \left[\int_{t_1}^{t}\int_{\R\backslash {B}_{\frac{L}{2\sqrt{2T}}}}e^{-\lambda (t-s)}e^{-|z|^2}dzds\right]M
\end{align*}
By \eqref{T-choice} and \eqref{L-choice},  if $\sup_{x\in{B}_{2L}(x_0)} u(x,t;u_0,v_0)\le \epsilon$ for any $t_1\le t< t_2$, then
\begin{equation}\label{v-est}
v(x,t;u_0,v_0)\leq (1+\frac{\mu M}{\lambda \pi^{\frac{N}{2}}}+\frac{\mu}{\lambda})\epsilon \quad \forall\,\ t_1+T\leq t\leq \min\{t_1+2T,t_2\},\,\
x\in {B}_{L}(x_0).
\end{equation}
For $t_1+T\leq t\leq \min\{t_1+2T,t_2\}$, and  $x\in {B}_{L}(x_0)$, we have
\begin{align}\label{partial-v-est}
&|\partial_{x_i}v(x,t;u_0,v_0)|\cr
& \leq \frac{1}{\pi^{\frac{N}{2}}}T^{-\frac{1}{2}}e^{-\lambda T}M+\frac{ \mu}{\pi^{\frac{N}{2}}} \left[\int_{t_1}^{t}\int_{\R^N\backslash {B}_{\frac{L}{2\sqrt{2T}}}(0)}(t-s)^{-\frac{1}{2}}e^{-\lambda (t-s)}|z|e^{-|z|^2}dzds\right]M \cr
&+\frac{ \mu}{\pi^{\frac{N}{2}}} \left[\int_{t_1}^{t}\int_{{B}_{ \frac{L}{2\sqrt{2T}}}(0)}(t-s)^{-\frac{1}{2}}e^{-\lambda (t-s)}|z|e^{-|z|^2}dzds\right]\sup_{t_1\leq t< t_2, z\in {B}_{2L}(x_0)}u(z,t;u_0,v_0).
\end{align}
By \eqref{T-choice} and \eqref{L-choice},  if $\sup_{x\in{B}_{2L}(x_0)} u(x,t;u_0,v_0)\le \epsilon$ for any $t_1\le t< t_2$, then
\begin{equation}
\label{partial-v-est-1}
|\partial_{x_i}v(x,t;u_0,v_0)|\leq (1+\frac{ \mu}{\pi^{\frac{N}{2}}}\lambda^{-\frac{1}{2}}\Gamma(\frac{1}{2})M+\frac{ \mu}{\pi^{\frac{N}{2}}}\lambda^{-\frac{1}{2}}\Gamma(\frac{1}{2}))\epsilon
 \end{equation}
 for  $t_1+T\leq t\le \min\{t_1+2T, t_2\}$ and $x\in {B}_{L}(x_0)$.

 In the above arguments, replace $t_1$ by $t_1+T$. We have \eqref{v-est} and \eqref{partial-v-est-1}
 for $t_1+2T\le t\le \min\{t_1+3T,t_2\}$. Repeating this process, we have \eqref{v-est} and \eqref{partial-v-est-1}
 for $t_1+T\le t<t_2$.
It then follows that \eqref{v-dv-est} holds for any $\epsilon>0$.

Next, we prove that there is $\epsilon_0>0$ such that \eqref{ddv-dtv-est} holds for $0<\epsilon\le \epsilon_0$. Assume this is  not true. Then there are $\epsilon_n\to 0$,
$(u_n,v_n)\in X_1^+\times X_2^+$, $x_n\in\R^N$, $T_0(u_n,v_n)\le t_{1n}<t_{1n}+T(\epsilon_n)+1\le t_n<t_{2n}$ such that
$$
\sup_{x\in B_{2L(\epsilon_n)}(x_n)}u(x,t;u_n,v_n)\le \epsilon_n, \quad \forall\, t_{1n}\le t< t_{2n}
$$
and
$$
\chi \sup_{x\in B_{L(\epsilon_n)}(x_n)}\sum_{i,j=1}^N |\p_{x_ix_j}v(x,t_n;u_n,v_n)|> \frac{a}{4}.
$$
Let
$$
(u_n(x,t),v_n(x,t))=(u(x+x_n,t+t_n;u_n,v_n),v(x+x_n,t+t_n;u_n,v_n)).
$$
Without loss of generality, we may assume that
$$
(u_n(x,t),v_n(x,t))\to (u^*(x,t),v^*(x,t))
$$
as $n\to \infty$ locally uniformly on $(x,t)\in\R^N\times [-1,\infty)$. Note that $v^*(x,t)$ satisfies
$$
v_t^*=\Delta v^*-\lambda v^*+\mu u^*,\quad x\in\R^N,\,\, t\ge -1
$$
and
$$
\chi \sup_{{x\in \R^{N}}}\sum_{i,j=1}^N|\p_{x_ix_j}v^*(x,0)|\geq \frac{a}{4}.
$$
By \eqref{v-dv-est}, we have
$$
u^*(x,t)=0,\,\, v^*(x,t)=0\quad {x\in \R^{N}},\,\, -1\le t \le 0.
$$
Then by the comparison principle for parabolic equations,
$$
v^*(x,t)= 0\quad\forall\, x\in\R^N,\,\, t\ge -1,
$$
which is a contradiction. Hence  \eqref{ddv-dtv-est} holds.

\medskip

\noindent {\bf Step 2.}  Let $\tilde T_0\geq 1$ be such that
$e^{\lambda_0\tilde T_0}\ge 3$.
 In this step, we prove that
{\it for any $0<\epsilon\le \epsilon_0$, there is $0<\delta_{\epsilon}\leq \epsilon_0$ such that for any $(u_0,v_0)\in X_1^+\times X_2^+$, $x_0\in\R^N$, any  $t_0\ge T_0(u_0,v_0)+2$,  if
$$
\sup_{x\in{B}_{2L}(x_0)} u(x, t_0;u_0,v_0)\ge \epsilon,
$$
then
$$
\inf_{x\in{B}_{2L}(x_0)} u(x,t;u_0,v_0)\ge \delta_{\epsilon}\quad \forall\, t_0\le t\le t_0+T+\tilde T_0,
$$
where $L=L(\epsilon)$ and $T=T(\epsilon)$ are as in Step 1.
}

\medskip

{
Suppose on the contrary that the conclusion in Step 2 fails. Then there exist $0<\bar\epsilon_0\le \epsilon_0$,
$(u_{0n},v_{0n})\in X_1^+\times X_2^+$, $x_{0n}\in\R^N$, $t_{0n}\geq T_0(u_{0n},v_{0n})+2$, $x_n$, $x^*_{n}\in\R^N$ with $|x_n-x_{0n}|\leq 2L(\bar\epsilon_0)$, $|x^*_{n}-x_{0n}|\leq 2L(\bar\epsilon_0)$, $t_{n}\in\R$ with $t_{0n}\leq t_{n}\leq t_{0n}+T(\bar\epsilon_0)+ \tilde T_0$ such that
\begin{equation}\label{u-t-0-n}
\lim_{n\to\infty} u(x_n, t_{0n}; u_{0n}, v_{0n})\geq \bar\epsilon_0
\end{equation}
and
\begin{equation}\label{u-t-n}
\lim_{n\to\infty} u(x^*_{n}, t_{n}; u_{0n}, v_{0n})=0.
\end{equation}
Let $u_n(x,t)=u(x+x_{0n} ,t+t_{0n}-1;u_{0n},v_{0n})$, $v_n(x,t)=v(x+x_{0n}, t+t_{0n}-1;u_{0n},v_{0n})$, and
$T=T(\bar\epsilon_0)+ \tilde T_0$, $L=L(\bar\epsilon_0)$.
Without loss of generality,
we may assume that
\begin{equation}\label{u-n-conv}
(u_n(x,t),v_n(x,t))\to (u^*(x,t),v^*(x,t))
\end{equation}
as $n\to\infty$ locally uniformly in $(x,t)\in \R^N\times [0,\infty)$. Then  $(u^*, v^*)$ is a solution of \eqref{Main-Eq}
for $t\ge 0$.

By \eqref{u-t-0-n}, \eqref{u-n-conv} and the comparison principle for parabolic equations, we have
$$
\inf_{t\in[1,1+T], x\in B_{2L}(0)} u^*(x,t)>0.
$$
Let $\tilde\epsilon=\frac{1}{2}\inf_{t\in[1,1+T], x\in B_{2L}(0)} u^*(x,t)>0$. By \eqref{u-n-conv}, there exists $N=N(\tilde \epsilon)$ such that
\begin{align*}
u_n(x,t) \geq u^*(x,t)- \tilde\epsilon \geq \inf_{t\in[1,1+T], x\in B_{2L}(0)} u^*(x,t)-\tilde \epsilon=\frac{1}{2}\inf_{t\in[1,1+T], x\in B_{2L}(0)} u^*(x,t)=\tilde\epsilon
\end{align*}
for any $t\in [1,1+T], x\in B_{2L}(0)$ and $n\ge N$.
Thus,
$$\inf_{t\in[1,1+T], x\in B_{2L}(0)} u_n(x,t)\geq \tilde\epsilon\quad \forall\,\, n\ge N.$$
Note that $$\inf_{t\in[1,1+T], x\in B_{2L}(0)} u_n(x,t)=\inf_{t\in[t_{0n},t_{0n}+T], x\in B_{2L}(x_{0n})} u(x,t;u_{0n}, v_{0n}).$$
It follows that
$$\liminf_{n\to\infty}\inf_{t\in[t_{0n},t_{0n}+T], x\in B_{2L}(x_{0n})} u(x,t;u_{0n}, v_{0n})\geq \tilde\epsilon,$$
which contradicts to \eqref{u-t-n}. Step 2 is thus proved.
}

\medskip

\noindent {\bf Step 3.} In this step, we prove that {\it  there is $ 0<\tilde\epsilon_0\le \epsilon_0$ such that for any $0<\epsilon\le  \tilde\epsilon_0$, there is $\tilde\delta_\epsilon>0$ such that
for any $(u_0,v_0)\in X_1^+\times X_2^+$, any $x_0\in\R^N$, and any $t_1,t_2$ satisfying that $T_0(u_0,v_0){+2}\le t_1< t_2\le \infty$,
if
$$
\sup_{x\in{B}_{2L}(x_0)}u(x, t_1;u_0,v_0)=\epsilon,\, \,\, \sup_{x\in{B}_{2L}(x_0)} u(x,t;u_0,v_0)\le \epsilon,\,\, \forall\,\ t_1<t<t_2,
$$
then
$$
\inf_{x\in{B}_{2L}(x_0)} u(x,t;u_0,v_0)\ge \tilde\delta_\epsilon\quad \forall\,\ t_1\le t< t_2,
$$
where $L=L(\epsilon)$ is as in Step 1.
}

{To prove the statement in Step 3, first, let $\phi_0$ be the positive principal eigenfunction of \eqref{eigen-eq}
with $\sup_{x\in B_{L_0}(0)}\phi_0(x)=1$.
Consider
\begin{equation}
\label{new-new-eqq1}
\begin{cases}
 u_t=\Delta  u+q(x,t)\cdot\nabla  u+\frac{a}{2}u, \quad x\in B_{L_0}(0), t>0\\
 u(x,t)=0, \quad x\in\partial B_{L_0}(0), \quad t>0,\\
 u(x,0)=\phi_0(x), \quad x\in B_{L_0}(0).
\end{cases}
\end{equation}
Let $ \tilde u(x,t;q)$ be the solution of \eqref{new-new-eqq1}. Let $ \tilde T_0\geq 1$ be as in Step 2.
We claim that there is $\tilde\epsilon_0>0$ such that for any function $q(x,t)$ which is $C^1$ in $x$ {and  H\"older continuous in $t$} with exponent $0<\theta<\frac{1}{2}$,
\begin{equation}
\label{q-eq1}
\sup_{t\ge 0}\|q(\cdot,t)\|_{C(\bar B_{L_0}(0))}\le
\chi N\tilde M \tilde\epsilon_0
 \end{equation}
 ($\tilde M$ is as in Step 1),  and
 \begin{equation}
 \label{q-eq2}
 \sup_{{t,s\ge 0, t\not =s}}\frac{\|q(\cdot,t)-q(\cdot,s)\|_{{C(\bar B_{L_0}(0))}}}{|t-s|^\theta}\le \chi M M_1
 \end{equation}
  ($M$ and $M_1$ are as in Lemma \ref{persistence-lm}), there holds
\begin{equation}
\label{new-new-eqq2}
 \tilde u(x, \tilde T_0;q)\ge 2 \phi_0(x),\quad x\in B_{L_0}(0).
\end{equation}
In fact, assume this is not true. Then there are $\epsilon_n\to 0$ as $n\to\infty$ and $q_n(x,t)$  satisfying \eqref{q-eq1} and \eqref{q-eq2}    {and $x_n\in B_{L_0}(0)$ such that $ \tilde u(x_n, \tilde T_0;q_{n})< 2 \phi_0(x_n)$.}
%\eqref{new-new-eqq2} does not hold.
Let $u_n(x,t)= \tilde u(x,t;q_n)$. Without loss of generality, we may assume that
$$
u_n(x,t)\to u^*(x,t),\quad \p_{x_j} u_n(x,t)\to \p_{x_j} u^*(x,t)\quad  {\rm as} \quad n\to\infty
$$ locally uniformly in $(x,t)\in \bar B_{L_{0}}(0)\times [0,\infty)$. Note that
$u^*(x,t)= \tilde u(x,t;0)=e^{\lambda_0t}\phi_0(x)$. Hence
$$
u^*(x, \tilde T_0)\ge 3 \phi_0(x),\quad x\in B_{L_0}(0).
$$
This together with the Hopf's Lemma implies that
$$
u_n(x, \tilde T_0)\ge 2 \phi_0(x),\quad x\in B_{L_0}(0),
$$
which is a contradiction. Hence the claim holds true.

Next, without loss of generality, we may assume that
$$
\frac{ 3 a}{4}-b \tilde\epsilon_0\ge \frac{a}{2}.
$$
By Step 1, for
  $0<\epsilon\le \tilde\epsilon_0$,  $t_1+T(\epsilon){+1}\le t< t_2\leq \infty$, and $x\in{B}_{L}(x_0)$,
\begin{align}\label{u-sup-eq}
u_{t}&=\Delta u - \chi \nabla v\cdot\nabla u+u(a-\chi \Delta v -bu)\cr
&\geq \Delta u +q(x,t)\cdot\nabla u+\frac{a}{2}u,
\end{align}
where $q(x,t)=- \chi \nabla v(x,t;u_0,v_0)$. By Step 1 and Lemma \ref{persistence-lm}, $q(\cdot+x_0,\cdot{+t_1+T(\epsilon)+1})$ satisfies \eqref{q-eq1} and \eqref{q-eq2}.
Let $n_0\ge 0$ be such that
  $$t_1+T(\epsilon)+1+ n_0 \tilde T_0<t_2\quad {\rm and}\quad t_1+T(\epsilon)+1+(n_0+1) \tilde T_0\ge t_2.
  $$
 By Step 2,
$$
\inf_{x\in B_{2L}(x_0)} u(x,t;u_0,v_0)\ge \delta_\epsilon\quad \forall\, t_1\le t\le t_1+T(\epsilon)+1.
$$
This together with
 the comparison principle for parabolic equations and \eqref{new-new-eqq2} implies that
\begin{align}
u(x,t_1+T(\epsilon)+1+k \tilde T_0;u_0,v_0)&\geq  2^{k-1}\delta_\epsilon  \tilde u(x-x_0,\tilde T_0;q(\cdot+x_0,\cdot+t_1+T(\epsilon)+1+(k-1) \tilde T_0))\cr
&\geq  2^{k} \delta_{\epsilon} \phi_0(x-x_0) \quad \forall\,\ x\in{B}_{L_0}(x_0)
\end{align}
for $k=1,2,\cdots,n_0$, where $\delta_\epsilon$ is as in Step 2.
By Step 2 again, we then have
$$
\inf_{x\in B_{2L}(x_0)}u(x,t;u_0,v_0)\ge \tilde\delta_\epsilon:=\min\{\delta_\epsilon,\delta_{\delta_\epsilon}\}\quad \forall \, t_1\le t<t_2.
$$
}

\medskip

\noindent {\bf Step 4.}
In this step, we prove that {\it  there is $m>0$ such that for any $(u_0,v_0)\in X_1^+\times X_2^+$
with $\inf_{x\in\R^N}u_0(x)>0$,
$$
\liminf_{t\to\infty} \inf_{x\in\R^N} u(x,t;u_0,v_0)\ge m.
$$
}

 To prove the statement in Step 4, let {$\tilde \epsilon_0$ be as in Step 3 and
$$
\tilde \delta:=\tilde \delta(u_0,v_0)=\inf_{x\in\R^N} u(x,T_0(u_0,v_0)+T(\tilde \epsilon_0)+3;u_0,v_0).
$$}
By the assumption $\inf_{x\in\R^N} u_0(x)>0$, {$\tilde \delta>0$}. Let
$$
k_0=\inf\{k\in \Z\,|\, 2^{k}{\tilde \delta}\ge \tilde\epsilon_0\}
\quad {\rm and}\quad
T_{00}(u_0,v_0)=T_0(u_0,v_0)+T(\tilde\epsilon_0){+3}+k_0  \tilde T_0.
$$
We claim that
\begin{equation}
\label{step5-eq}
\inf_{x\in\R^N}u(x,t;u_0,v_0)\ge \min\{\delta_{\tilde\epsilon_0},\tilde\delta_{\tilde\epsilon_0}\}\quad \forall\, t\ge T_{00}(u_0,v_0).
\end{equation}

To prove the claim,
for any $x_0\in\R^N$, let
$$
I(x_0)=\{t> T_0(u_0,v_0){+2}\,|\, \sup_{x\in{B}_{2L(\tilde\epsilon_0)}(x_0)}u(x,t;u_0,v_0)<\tilde\epsilon_0\}.
$$
Note that $I(x_0)$ is an open set. By Step 2,
\begin{equation}
\label{step5-eq0}
\inf_{x\in B_{2L(\tilde\epsilon_0)}(x_0)}u(x,t;u_0,v_0)\ge \delta_{\tilde\epsilon_0}\quad \forall\, t\not \in I(x_0)\,\,\, {{\rm for}\, t>T_0(u_0,v_0){+2}}.
\end{equation}
Hence, if $I(x_0)=\emptyset$, then
\begin{equation}
\label{step5-eq1}
\inf_{x\in{B}_{2L(\tilde\epsilon_0)}(x_0)} u(x,t;u_0,v_0)\ge \delta_{\tilde\epsilon_0}\quad \forall\,\  t\ge T_0(u_0,v_0){+2}.
\end{equation}

If $I(x_0)\not=\emptyset$, then $I(x_0)=\cup (a_i,b_i)$. If $a_i\not =T_0(u_0,v_0){+2}$,  then
$$
\sup_{x\in{B}_{2L(\tilde\epsilon_0)}(x_0)} u(x, a_i;u_0,v_0)=\tilde\epsilon_0
\quad{\rm and}\quad
\sup_{x\in{B}_{2L(\tilde\epsilon_0)}(x_0)}u(x,t;u_0,v_0)<\tilde\epsilon_0\quad \forall\, t\in (a_i,b_i).
$$
 By the statement in Step 3,
 \begin{equation}
 \label{step5-eq2}
 \inf_{x\in{B}_{2L(\tilde\epsilon_0)}(x_0)} u(x,t;u_0,v_0)\ge \tilde\delta_{\tilde\epsilon_0}\quad \forall\, t\in (a_i,b_i)
 \,\, {\rm for}\,\, a_i\not = T_0(u_0,v_0){+2}.
 \end{equation}
 If $a_i=T_0(u_0,v_0){+2}$,
 by the arguments in Step 3,  there holds
\begin{align*}
u(x,T_0(u_0,v_0)+T(\tilde\epsilon_0)+3+k \tilde T_0;u_0,v_0)\geq  2^{k} \tilde\delta \phi_{0}(x-x_0) \quad \forall\,\ x\in{B}_{L_0}(x_0)
\end{align*}
for $k=0,1,2,\cdots, k_0$. This implies that $b_i\le T_{00}(u_0,v_0)$. This together with \eqref{step5-eq0},
\eqref{step5-eq1}, and \eqref{step5-eq2}
implies \eqref{step5-eq}.
 This proves the statement in Step 4 with $m=\min\{\delta_{\tilde\epsilon_0},\tilde\delta_{\tilde\epsilon_0}\}$.
  Theorem \ref{Main-thm3} is thus proved.
\end{proof}

%%%%%%%%%

\section{Asymptotic behavior of  solutions with strictly positive initial data}
In this section, we discuss the asymptotic behavior of global bounded  classical solutions of \eqref{Main-Eq} and prove Theorem \ref{Main-thm4}. The proof of Theorem \ref{Main-thm4} can be done by following the ideas given in \cite{win_JDE2014}.
Throughout this section, We assume that $\inf_{x\in\R^N}u_{0}(x)>0$ for $u_{0}\in C_{\rm unif}^{b}(\R^N)$ and $v_0(x)\geq 0$ for $v_0 \in C_{\rm unif}^{b,1}(\R^N)$.
We also assume that  $b>\frac{N\chi \mu}{4}$ and $\lambda\geq \frac{a}{2}$, and
$\theta=\frac{N \mu \chi}{4b}$. Hence $0<\theta<1$.   We  denote by $(u(x,t; u_0, v_0), v(x,t; u_0, v_0))$ the global bounded classical solution of \eqref{Main-Eq} associated with initial data $(u_{0}, v_0)$. Again, throughout this section, unless specified otherwise, $C$ denotes a generic  constant
independent of $u,v$ and may be different at different places.

Recall that $U(x,t)=u(x, t; u_0, v_0)-\frac{a}{b}$, $V(x,t)=v(x, t; u_0, v_0)-\frac{\mu}{\lambda}\frac{a}{b}$. Then $(U, V)$ solves \eqref{U-V-eq}.
We first present some lemmas on the estimates about $U$ and $V$. The first lemma provides an estimate on $\|\nabla V(\cdot,t)\|_\infty$.

\begin{lem}\label{v_deriv}
There exists $C_0=C_0(a, \mu, \lambda)>0$ such that
%Let $C_0=C_0(a, \mu, \lambda)=\frac{2a\mu C\sqrt{\pi}}{\sqrt{\lambda}}$. Then
 \begin{equation}
\limsup_{t\to\infty}\|\nabla V(\cdot, t)\|_{\infty}\leq \frac{C_0}{b(1-\theta)}.
\end{equation}
\end{lem}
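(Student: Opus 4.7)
The plan is straightforward given Theorem \ref{Main-thm1}: apply the variation of constants formula to the $V$-equation in \eqref{U-V-eq}, use the gradient estimate \eqref{Lp-Lq-2} for the semigroup $\{e^{t(\Delta-\lambda I)}\}$, and feed in the asymptotic $L^\infty$ bound \eqref{u-bdd-2} on $u$. No finer structural argument should be necessary.

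Concretely, for $t_0 \ge 0$ fixed and $t \ge t_0$, I would start from
\begin{equation*}
V(\cdot,t) \;=\; e^{(t-t_0)(\Delta-\lambda I)} V(\cdot,t_0) \;+\; \mu \int_{t_0}^{t} e^{(t-s)(\Delta-\lambda I)} U(\cdot,s)\, ds,
\end{equation*}
which follows from $V_t = \Delta V - \lambda V + \mu U$ in \eqref{U-V-eq}. Differentiating in $x$ and invoking \eqref{Lp-Lq-2} yields
\begin{equation*}
\|\nabla V(\cdot,t)\|_\infty \;\leq\; C_N (t-t_0)^{-1/2} e^{-\lambda(t-t_0)} \|V(\cdot,t_0)\|_\infty \;+\; \mu C_N \int_{t_0}^{t} (t-s)^{-1/2} e^{-\lambda(t-s)} \|U(\cdot,s)\|_\infty\, ds.
\end{equation*}
The first term vanishes as $t\to\infty$ for each fixed $t_0$, so it does not enter the $\limsup$. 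To control the integrand I would use \eqref{u-bdd-2}: since $u\ge 0$ and $\theta = N\mu\chi/(4b)$, \eqref{u-bdd-2} reads $\limsup_{t\to\infty}\|u(\cdot,t)\|_\infty \leq \tfrac{a}{b(1-\theta)}$, and combining with the elementary bound $\|U\|_\infty \leq \max\{\|u\|_\infty - a/b,\ a/b\}$ (which uses $u\ge 0$) gives
\begin{equation*}
\limsup_{t\to\infty}\|U(\cdot,t)\|_\infty \;\leq\; \frac{a}{b(1-\theta)}.
\end{equation*}

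Given $\varepsilon > 0$, I would then pick $t_0 = t_0(\varepsilon)$ so large that $\|U(\cdot,s)\|_\infty \le \tfrac{a}{b(1-\theta)} + \varepsilon$ for all $s \ge t_0$, substitute this into the convolution integral, and evaluate $\int_0^\infty \tau^{-1/2} e^{-\lambda \tau}\, d\tau = \sqrt{\pi/\lambda}$. Sending $\varepsilon \to 0$ yields the claim with $C_0 = \mu C_N a \sqrt{\pi/\lambda}$, a constant depending only on $a,\mu,\lambda$ (and on $N$ through $C_N$). The main thing worth double-checking is that the factor $\tfrac{1}{b(1-\theta)}$ in the conclusion originates entirely from the asymptotic bound on $\|u\|_\infty$ rather than from the semigroup estimate, which is indeed the case; I do not expect any serious obstacle beyond this routine bookkeeping.
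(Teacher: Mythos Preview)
Your argument is correct and follows essentially the same route as the paper: variation of constants for the second equation, the gradient estimate \eqref{Lp-Lq-2}, and the asymptotic $L^\infty$ bound from Theorem~\ref{Main-thm1}. The only cosmetic differences are that the paper writes the Duhamel formula for $v$ rather than $V$ (harmless since $\nabla V=\nabla v$) and uses a factor-$2$ cushion on $\|u\|_\infty$ in place of your $\varepsilon$ argument, arriving at $C_0=\frac{2a\mu C\sqrt{\pi}}{\sqrt{\lambda}}$.
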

\begin{proof}
%By \eqref{U-V-estimate} and
By \eqref{u-bdd-2}, we can fix a sufficiently large $t_1$ such that
%\begin{equation}\label{u-U-ineq}
%\max\{\|u(\cdot, t; u_0,v_0)\|_{\infty}, \|U(\cdot,t)\|_\infty\}\leq \frac{2a}{b(1-\theta)}  \quad\,\ \forall\,\ t>t_1.
%\end{equation}
\begin{equation}\label{u-ineq}
\|u(\cdot, t; u_0,v_0)\|_{\infty}\leq \frac{2a}{b(1-\theta)}  \quad\,\ \forall\,\ t>t_1.
\end{equation}
By the variation of constant formula, we have that
$$
v(\cdot, t; u_0, v_0)=e^{(t-t_1)(\Delta-\lambda I)}v(\cdot, t_1; u_0, v_0)+\mu\int_{t_1}^t e^{(t-s)(\Delta-\lambda I)}u(\cdot, s; u_0, v_0)ds \quad \forall\,\ t\geq t_1.
$$
Note that $\nabla V(x,t)=\nabla v(x,t;u_0,v_0)$. Thus, we have
\begin{align}\label{v-deriv}
\|\nabla V(\cdot,t)\|_\infty \leq &\|\nabla e^{(t-t_1)(\Delta-\lambda I)}v(\cdot, t_1;u_0, v_0)\|_{\infty} \nonumber\\
& \, +\mu \int_{t_1}^t \|\nabla e^{(t-s)(\Delta-\lambda I)}u(\cdot, s;u_0,v_0)\|_{\infty}ds \quad \forall\,\ t\geq t_1.
\end{align}
By \eqref{Lp-Lq-2} and \eqref{u-ineq},  we have
\begin{equation}\label{v-deriv1}
\|\nabla e^{(t-t_1)(\Delta-\lambda I)}v(\cdot, t_1;u_0,v_0)\|_{\infty}\leq C(t-t_1)^{-\frac{1}{2}}e^{-\lambda(t-t_1)}\|v(\cdot, t_1;u_0,v_0)\|_{\infty}
\quad \forall\,\ t\geq t_1,
\end{equation}
and
\begin{align}\label{v-deriv2}
\mu \int_{t_1}^t \|\nabla e^{(t-s)(\Delta-\lambda I)}u(\cdot, s;u_0,v_0)\|_{\infty}ds&\leq \mu C\int_{t_1}^t (t-s)^{-\frac{1}{2}}e^{-\lambda(t-s)}\|u(\cdot, s;u_0,v_0)\|_{\infty}ds\cr
&\leq \frac{2a \mu C}{b(1-\theta)} \int_{t_1}^t (t-s)^{-\frac{1}{2}}e^{-\lambda(t-s)}ds\cr
&\leq \frac{2a \mu C\sqrt{\pi}}{b(1-\theta)\sqrt{\lambda}}\quad \forall\, t\ge t_1.
\end{align}
The lemma with $C_0=C_0(a, \mu, \lambda)=\frac{2a\mu C\sqrt{\pi}}{\sqrt{\lambda}}$ then follows from \eqref{v-deriv}, \eqref{v-deriv1} and \eqref{v-deriv2}.
\end{proof}

The second lemma provides an estimate on $\|\Delta V(\cdot,t)\|_\infty$.

\begin{lem}\label{Delta-v-bdd}
There exists $C_1=C_1(\lambda, a, N)>0$ such that the following holds
\begin{equation}\label{delta-v-est}
\limsup_{t\to\infty}\|\Delta V(\cdot,t)\|_{\infty}\leq \frac{\mu C_1}{b(1-\theta)^2}.
\end{equation}
\end{lem}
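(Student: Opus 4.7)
The strategy mirrors the second-derivative estimate carried out in the proof of Lemma \ref{persistence-lm}: control $\|\Delta V\|_\infty=\|\Delta v(\cdot,t;u_0,v_0)\|_\infty$ through the embedding $X_1^\gamma\hookrightarrow C^2$ with $\gamma\in(1,\tfrac32)$, and then bound $\|A^\gamma v(\cdot,t)\|_\infty$ via the variation-of-constants formula for $v$. The novelty here is that one must track the $b$- and $(1-\theta)$-dependence carefully, using Theorem \ref{Main-thm1} and Lemma \ref{v_deriv} as quantitative inputs. Throughout, $A=\lambda I-\Delta$.

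First, I would fix $\beta$ with $\gamma-1<\beta<\tfrac12$ and derive a bound of the form
\begin{equation*}
\limsup_{t\to\infty}\|A^\beta u(\cdot,t;u_0,v_0)\|_\infty\le \frac{C'}{b(1-\theta)^2}
\end{equation*}
for some $C'=C'(a,\lambda,N)$. This proceeds via the mild formulation
\begin{equation*}
u(\cdot,t)=e^{(t-t_1)(\Delta-\lambda I)}u(\cdot,t_1)-\chi\!\int_{t_1}^{t}\!e^{(t-s)(\Delta-\lambda I)}\nabla\!\cdot\!\bigl(u(\cdot,s)\nabla v(\cdot,s)\bigr)ds+\int_{t_1}^{t}\!e^{(t-s)(\Delta-\lambda I)}u(a+\lambda-bu)(\cdot,s)ds,
\end{equation*}
applying $A^\beta$, and using \eqref{Lp-Lq-3} together with Lemma \ref{L_Infty bound 2} (which gives $\|A^\beta e^{t(\Delta-\lambda I)}\nabla\cdot\|\lesssim t^{-\beta-1/2}e^{-\lambda t}$, exactly as in \eqref{u-02-02}). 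The three resulting integrals are absolutely convergent since $\beta<\tfrac12$. Plugging in the eventual bounds $\|u(\cdot,s)\|_\infty\le \tfrac{2a}{b(1-\theta)}$ from \eqref{u-bdd-2} and $\|\nabla v(\cdot,s)\|_\infty\le \tfrac{C_0}{b(1-\theta)}$ from Lemma \ref{v_deriv}, the chemotactic integral contributes $\tfrac{C}{b^2(1-\theta)^2}$ while the logistic integral contributes $b\|u\|_\infty^2\sim \tfrac{C}{b(1-\theta)^2}$, which dominates.

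Next, I would apply $A^\gamma$ to
\begin{equation*}
v(\cdot,t)=e^{(t-t_1)(\Delta-\lambda I)}v(\cdot,t_1)+\mu\!\int_{t_1}^{t}e^{(t-s)(\Delta-\lambda I)}u(\cdot,s)\,ds
\end{equation*}
and split $A^\gamma e^{(t-s)(\Delta-\lambda I)}=A^{\gamma-\beta}e^{(t-s)(\Delta-\lambda I)}\cdot A^\beta$. Since $\gamma-\beta<1$, the kernel $(t-s)^{-(\gamma-\beta)}e^{-\lambda(t-s)}$ is integrable and, together with the step-one bound on $\|A^\beta u\|_\infty$, yields
\begin{equation*}
\limsup_{t\to\infty}\|A^\gamma v(\cdot,t)\|_\infty\le \frac{\mu C''}{b(1-\theta)^2}
\end{equation*}
(the first term $A^\gamma e^{(t-t_1)(\Delta-\lambda I)}v(\cdot,t_1)$ decays like $(t-t_1)^{-\gamma}e^{-\lambda(t-t_1)}$ and vanishes in the limsup). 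The continuous embedding $X_1^\gamma\hookrightarrow C^2$ from \eqref{Fractional power Imbedding-1} then gives $\|\Delta v(\cdot,t)\|_\infty\le C\|v(\cdot,t)\|_{X_1^\gamma}$, and since $\Delta V=\Delta v$, the claim follows with $C_1=C_1(\lambda,a,N)$ absorbing all constants.

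The only delicate point is choosing $\beta$ and $\gamma$ simultaneously so that $\gamma-1<\beta<\tfrac12<\gamma<\tfrac32$, ensuring both the fractional-power gradient estimate and the $X_1^\gamma\hookrightarrow C^2$ embedding are available while the time singularities $(t-s)^{-\beta-1/2}$ and $(t-s)^{-(\gamma-\beta)}$ remain integrable. This is the same delicate juggling already performed in Lemma \ref{persistence-lm}, so the real work here is merely the bookkeeping of the $b(1-\theta)$ factors so that the final constant takes the advertised form $\tfrac{\mu C_1}{b(1-\theta)^2}$.
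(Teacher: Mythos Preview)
Your proposal is correct and follows essentially the same approach as the paper: bound $\|A^\beta\,\cdot\,\|_\infty$ for the first component via the mild formulation and the estimates from Theorem \ref{Main-thm1} and Lemma \ref{v_deriv}, then feed this into the variation-of-constants formula for the second component and use $X_1^\gamma\hookrightarrow C^2$. The only cosmetic difference is that the paper carries out the first step for $U=u-\tfrac{a}{b}$ with the semigroup $e^{(t-t_2)(\Delta-aI)}$ (using the equation \eqref{U-V-eq}) rather than for $u$ with $e^{(t-t_1)(\Delta-\lambda I)}$, but since $\Delta V=\Delta v$ and the bookkeeping of the $b(1-\theta)$ factors comes out the same, this makes no substantive difference.
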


\begin{proof}
Fix $\beta$ and $\gamma$ such that $\gamma\in(1,\frac{3}{2})$ and $\gamma-1<\beta<\frac{1}{2}$.
We  first prove
that
 there exists $ \tilde C_1= \tilde C_1(\lambda, a, N)>0$ such that
\begin{equation}
\label{A-beta-est}
\limsup_{t\to\infty}\|A^{\beta}U(\cdot, t)\|_{\infty}\leq \frac{ \tilde C_1}{b(1-\theta)^2},
\end{equation}
where  $A^{\beta}=(\lambda I-\Delta)^{\beta}$.
To this end,  let $t_1>0$ be such that \eqref{u-ineq} holds, {then
\begin{align}\label{U-ineq}
\|U(\cdot,t)\|_{\infty}\leq \|U(\cdot,t)-\frac{a}{b}\|_{\infty}+\frac{a}{b}
\leq \frac{2a}{b(1-\theta)}+\frac{a}{b}
\leq \frac{3a}{b(1-\theta)}\quad\,\ \forall\,\ t>t_1.
\end{align}
}

By lemma \ref{v_deriv}, we can fix $t_2\ge t_1$ sufficiently large such that
\begin{equation}\label{v-ineq2}
\|\nabla v(\cdot, t;u_0,v_0)\|_{\infty}=\|\nabla V(\cdot,t)\|_\infty \leq \frac{2C_{0}}{b(1-\theta)} \quad\,\ \forall\,\ t>t_2.
\end{equation}
{By the variation of constant formula,
\begin{align}
U(\cdot, t)&=e^{(t-t_2)(\Delta-a I)}U(\cdot, t_2)-\chi\int_{t_2}^{t} e^{(t-s)(\Delta-a I)}\nabla\cdot(u(\cdot,s;u_0,v_0)\nabla V(\cdot, s))ds\cr
&\,\,\,-b\int_{t_2}^t e^{(t-s)(\Delta-a I)}U(\cdot,s)^{2}ds.
\end{align}}
 By the similar arguments to those in the proof of the last inequality of \eqref{condition-on-initial-eq}, we obtain that
\begin{align*}
\|A^{\beta} U(\cdot, t)\|_{\infty}&\leq C(t-t_2)^{-\beta}e^{-\lambda(t-t_2)}\|U(\cdot, t_2)\|_{\infty}+\chi C a^{\beta+\frac{1}{2}}\frac{4C_{0}}{b^2(1-\theta)^2}+\frac{9C}{b(1-\theta)^2}a^{\beta+1}.
\end{align*}
This together with  $\frac{\chi}{b}=\frac{4\theta}{N\mu}$ and $0<\theta<1$ implies
  \eqref{A-beta-est} with
  $ \tilde C_1= \tilde C_1(\lambda, a, N)=\frac{32Ca^{\beta+\frac{3}{2}}\sqrt{\pi}}{N\sqrt{\lambda}}+9Ca^{\beta+1}$.

Next, by \eqref{A-beta-est}, we can fix $t_3\ge t_2$ sufficiently large such that
\begin{equation}\label{A-beta-bd}
\|A^{\beta}U(\cdot, t)\|_{\infty}\leq \frac{2 \tilde C_1}{b(1-\theta)^2}  \quad\,\ \forall\,\ t>t_3.
\end{equation}
By the variation of constant formula, we have
$$
V(\cdot, t)=e^{(t-t_3)(\Delta-\lambda I)}V(\cdot,t_3)+\mu\int_{t_3}^{t}e^{(t-s)(\Delta-\lambda I)}U(\cdot,s)ds \quad \forall\,\ t>t_3.
$$
Note that
\begin{equation}\label{C2-embeding}
\|\Delta V(\cdot, t)\|_{\infty}\leq C \|A^\gamma V(\cdot, t)\|_{\infty}.
\end{equation}
By \eqref{Lp-Lq-3},
\begin{align}\label{A-gamma-1}
\|A^{\gamma}e^{(t-t_3)(\Delta-\lambda I)}V(\cdot,t_3)\|_{\infty}\leq C_{\gamma}(t-t_3)^{-\gamma}e^{-\lambda(t-t_3)}\|V(\cdot,t_3)\|_{\infty}\to 0 \quad {\rm as} \quad t\to\infty.
\end{align}
By \eqref{Lp-Lq-3} and \eqref{A-beta-bd}, we have
\begin{align}
\mu\int_{t_3}^{t}\|A^{\gamma}e^{(t-s)(\Delta-\lambda I)}U(\cdot,s)\|_{\infty}ds&=\mu\int_{t_3}^{t}\|A^{\gamma-\beta}e^{(t-s)(\Delta-\lambda I)}A^{\beta}U(\cdot,s)\|_{\infty}ds\cr
&\leq \mu C\frac{2 \tilde C_1}{b(1-\theta)^2}\int_{t_3}^{t}(t-s)^{-(\gamma-\beta)}e^{-\lambda(t-s)}ds\cr
&=\mu C\frac{2 \tilde C_1}{b(1-\theta)^2}\lambda^{\gamma-\beta-1}\quad \forall\, t\ge t_3.
\end{align}
It then follows that
\eqref{delta-v-est} holds with $C_1=C_1(\lambda, a, N)=2C \tilde C_1\lambda^{\gamma-\beta-1}$.%=2C\lambda^{\gamma-\beta-1}\big(\frac{32Ca^{\beta+\frac{3}{2}}\sqrt{\pi}}{N\sqrt{\lambda}}+9Ca^{\beta+1}\big)$.
\end{proof}

Observe that, by {\eqref{U-ineq}},
$$
\limsup_{t\to\infty}\|U(\cdot,t)\|_\infty\le \frac{3a}{b(1-\theta)}.
$$
In the following lemma, we provide a better estimate for $\|U(\cdot,t)\|_\infty$.

\begin{lem}\label{u-lower-bdd}
There exists $C_2=C_2(\lambda, a, N)>0$ such that
\begin{equation}\label{u-upper-bound}
\limsup_{t\to\infty}\|U(\cdot,t)\|_{\infty}\leq \frac{ C_2\theta}{b(1-\theta)^2}.
\end{equation}
\end{lem}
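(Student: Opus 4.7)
The approach is to write the $u$-equation in non-divergence form,
\[u_t=\Delta u-\chi\nabla v\cdot\nabla u+u(a-\chi\Delta v-bu),\]
and to compare $u$ with two spatially homogeneous logistic ODEs, using the bound on $\|\Delta v\|_\infty$ supplied by Lemma \ref{Delta-v-bdd}. Fix $\epsilon>0$; Lemma \ref{Delta-v-bdd} provides a $t_0$ with $\|\Delta v(\cdot,t)\|_\infty\le D:=(1+\epsilon)\mu C_1/(b(1-\theta)^2)$ for $t\ge t_0$, and Theorem \ref{Main-thm3} (persistence) lets us further arrange $\inf_x u(\cdot,t_0)\ge m>0$. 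Combined with $u\ge 0$, the inequality $|\Delta v|\le D$ gives the pointwise sandwich
\[u(a-\chi D-bu)\,\le\,u_t-\Delta u+\chi\nabla v\cdot\nabla u\,\le\,u(a+\chi D-bu).\]

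Let $\bar u(t),\underline u(t)$ solve the logistic ODEs $\bar u'=\bar u(a+\chi D-b\bar u)$ and $\underline u'=\underline u(a-\chi D-b\underline u)$ with $\bar u(t_0)=\|u(\cdot,t_0)\|_\infty$ and $\underline u(t_0)=m$. Viewed as $x$-independent functions on $\R^N$, these solve the corresponding parabolic equations with the same drift $-\chi\nabla v\cdot\nabla(\cdot)$. Subtracting shows that $u-\bar u$ and $\underline u-u$ satisfy linear parabolic inequalities of the form $w_t-\Delta w+\chi\nabla v\cdot\nabla w\le c(x,t)w$ with bounded coefficient $c$ and nonpositive initial data at $t_0$. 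The parabolic comparison principle on $\R^N$ for bounded solutions (via $W=e^{-\alpha t}w$ with $\alpha$ large, in Phragm\'en--Lindel\"of fashion) then yields $\underline u(t)\le u(x,t)\le \bar u(t)$ for all $x\in\R^N$, $t\ge t_0$. Since $\bar u(t)\to(a+\chi D)/b$, and, provided $\theta$ is small enough that $a>\chi D$, also $\underline u(t)\to(a-\chi D)/b$, sending $\epsilon\to 0$ and using $\chi/b=4\theta/(N\mu)$ gives
\[\limsup_{t\to\infty}\|U(\cdot,t)\|_\infty\,\le\,\frac{\chi\mu C_1}{b^2(1-\theta)^2}\,=\,\frac{4C_1\theta}{Nb(1-\theta)^2},\]
which is \eqref{u-upper-bound} with $C_2=4C_1/N$.

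The hypothesis $a>\chi D$ reduces to $4C_1\theta/(Na(1-\theta)^2)<1$, i.e.\ $\theta$ bounded away from $1$ by a quantity depending only on $(\lambda,a,N)$. In the residual range the stated bound is already implied by earlier results: Theorem \ref{Main-thm1} gives $\limsup\|U_+\|_\infty\le a\theta/(b(1-\theta))$, and $u\ge 0$ forces $\|U_-\|_\infty\le a/b$, both dominated by $C_2\theta/(b(1-\theta)^2)$ once $C_2\ge a$. Taking $C_2:=\max\{4C_1/N,a\}$ therefore handles every admissible $\theta$ while retaining dependence only on $(\lambda,a,N)$, consistent with the formula announced in Remark 4. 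The chief subtlety is the invocation of the parabolic comparison principle on the unbounded domain $\R^N$ for a linear equation with bounded drift $\chi\nabla v$ and bounded zeroth-order coefficient; this requires a careful argument (rather than a direct citation of the standard bounded-domain version) but is available through the usual exponential-weight reduction to Phragm\'en--Lindel\"of.
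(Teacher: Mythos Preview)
Your argument is correct and essentially the same as the paper's. Both rewrite the first equation in non-divergence form and use the bound on $\|\Delta v\|_\infty$ from Lemma~\ref{Delta-v-bdd} together with comparison against a spatially homogeneous logistic ODE to control $U_-$; the paper then handles $U_+$ directly via the estimate \eqref{U-posi-bdd} from Theorem~\ref{Main-thm1}, whereas you use a second ODE comparison for $U_+$ and only fall back on \eqref{U-posi-bdd} in the residual range $a\le\chi D$. That case split is in fact unnecessary: even when $a\le\chi D$ the inequality $\liminf_{t\to\infty}\inf_x u\ge\max\{0,(a-\chi D)/b\}\ge (a-\chi D)/b$ still yields $\limsup\|U_-\|_\infty\le\chi D/b$, so a single argument covers all $\theta\in(0,1)$, which is how the paper avoids the split. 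Your $\epsilon\to 0$ device gives the slightly sharper constant $4C_1/N$ in place of the paper's $8C_1/N$, and the final $C_2=\max\{4C_1/N,a\}$ agrees in form with the paper's $C_2=\max\{\tilde C_2,a\}$.
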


\begin{proof}
We first prove that there exists $\tilde C_2=\tilde C_2(\lambda, a, N)>0$ such that
\begin{equation}\label{u-lower-bound}
\liminf_{t\to\infty}\big(\inf_{x\in\R^N}u(x,t;u_0,v_0)\big)\geq \frac{a}{b}-\frac{ \tilde C_2\theta}{b(1-\theta)^2}.
\end{equation}
By Lemma \ref{Delta-v-bdd}, we can choose $t_4$ large enough such that
\begin{equation}
\label{t-4-eq}
\|\Delta v(\cdot,t; u_0, v_0)\|_{\infty}=\|\Delta V(\cdot,t)\|_{\infty}\leq \frac{2\mu C_1}{b(1-\theta)^2} \quad \forall \,\ t\geq t_4.
\end{equation}
Therefore,
\begin{align}
u_{t}\ge \Delta u - \chi \nabla v\cdot\nabla u+u(a-\frac{2\mu C_1\chi}{b(1-\theta)^2})-bu^2 \quad \forall\,\ x\in\R,\,\ t>t_4.
\end{align}
By the comparison principle for parabolic equations, we have
$$
\liminf_{t\to\infty}\big(\inf_{x\in\R^N}u(x,t;u_0,v_0)\big)\geq \frac{a-\frac{2\mu C_1 \chi}{b(1-\theta)^2}}{b}.
$$
This together with  $\frac{\chi}{b}=\frac{4\theta}{N\mu}$ implies that \eqref{u-lower-bound} holds with $\tilde C_2=\tilde C_2(\lambda, a, N)=\frac{8C_1}{N}$.

Next, we prove \eqref{u-upper-bound}.
By \eqref{u-lower-bound},
$$
\limsup_{t\to\infty}\|U_{-}(\cdot, t)\|_{\infty}\leq \frac{ \tilde C_2\theta}{b(1-\theta)^2}.
$$
By \eqref{U-posi-bdd},
$$
\limsup_{t\to\infty}\|U_{+}(\cdot, t)\|_{\infty}\leq \frac{a\theta}{b(1-\theta)}.
$$
Thus, \eqref{u-upper-bound} holds if we let $C_2=C_2(\lambda, a, N)=\max\{\tilde  C_2, a\}$.
\end{proof}

We now prove  Theorem \ref{Main-thm4}.

\begin{proof}[Proof of Theorem \ref{Main-thm4}]
 First of all, let $\theta_{0}\in(0,1)$ be defined
\begin{equation*}
\theta_0=\sup\{ \theta\in (0,1)\,|\, \frac{2C_2\theta}{(1-\theta)^2 a }\le \frac{1}{6}
\quad {\rm and}\quad
\frac{8C \lambda^{-\frac{1}{2}}a^{\frac{1}{2}}\pi \theta}{N(1-\theta)}\le \frac{1}{12}\}.
\end{equation*}
Then
\begin{equation}\label{eq-1-0}
\frac{2C_2\theta_0}{(1-\theta_0)^2 a }\le \frac{1}{6}
\quad {\rm and}\quad
\frac{8C \lambda^{-\frac{1}{2}}a^{\frac{1}{2}}\pi \theta_0}{N(1-\theta_0)}\le \frac{1}{12}.
\end{equation}
Let $K=\frac{N}{4\theta_0}>\frac{N}{4}$.
We prove that for any $b>K\chi\mu$, there are $C>0$ and $\alpha>0$ such that
\begin{equation}
\label{U-exp-decay}
\|U(\cdot,t)\|_{\infty}\leq Ce^{-\alpha t} \quad \forall\,\ t>0.
\end{equation}

To this end, first fix $b>K\chi\mu$. Note that
$\theta=\frac{N\mu\chi}{4b}<\theta_0.$
Then by \eqref{eq-1-0},
 there is $0<\alpha<\min\{\lambda,a\}$ such that
 \begin{equation}\label{eq-1-1}
\frac{2C_2\theta}{(1-\theta)^2 (a-\alpha) }\le  \frac{1}{6}
\quad {\rm and}\quad
\frac{8a C(\lambda-\alpha)^{-\frac{1}{2}}(a-\alpha)^{-\frac{1}{2}}\pi \theta}{N(1-\theta)}\le  \frac{1}{12}.
\end{equation}
Fix  $0<\alpha<\min\{\lambda,a\}$ such that \eqref{eq-1-1} holds and fix  $B>0$ large enough such that
\begin{equation}\label{eq-3-1}
\frac{2C_2\theta}{b(1-\theta)^2} \leq \frac{B}{6}
\quad {\rm and}\quad
\frac{16a C_{0}(a-\alpha)^{-\frac{1}{2}}\sqrt{\pi} \theta}{N\mu b(1-\theta)^2}\leq \frac{B}{12}.
\end{equation}
By \eqref{u-upper-bound}, there exists $t_0\ge t_4(\ge t_3\ge t_2\ge t_1)$  such that
\begin{equation}\label{U-upper}
\|U(\cdot,t)\|_{\infty}\le \frac{2C_2\theta}{b(1-\theta)^2} \quad \forall\,\ t\ge t_0,
\end{equation}
Consider the set
$$
S=\{T_0\ge t_0\, |\, \|U(\cdot,t)\|_{\infty}\leq Be^{-\alpha(t-t_0)}, \,\ \forall\,\ t\in[t_0, T_0]\}.
$$
 By \eqref{eq-3-1} and \eqref{U-upper}, we have $\|U(\cdot,t_0)\|_{\infty}\leq \frac{B}{6}$. Thus, $S$ is not empty and $T:=\sup S\in(t_0,\infty]$ is well-defined. Hence, to prove \eqref{U-exp-decay}, it is sufficient to prove that
\begin{equation}\label{T-infty}
T=\infty.
\end{equation}

Next, by the variation of constant formula,
$$
\|\nabla V(\cdot, t)\|_\infty=\|\nabla e^{(t-t_0)(\Delta-\lambda I)}V(\cdot, t_0)+\mu\int_{t_0}^t \nabla e^{(t-s)(\Delta-\lambda I)}U(\cdot, s)ds\|_\infty \quad \forall\,\ t\geq t_0.
$$
By \eqref{Lp-Lq-1}, \eqref{v-ineq2}, we have
\begin{align}\label{nabla-V-1}
\|\nabla e^{(t-t_0)(\Delta-\lambda I)}V(\cdot, t_0)\|_{\infty}&=\| e^{(t-t_0)(\Delta-\lambda I)} \nabla V(\cdot, t_0)\|_{\infty}\leq e^{-\lambda (t-t_0)}\|\nabla V(\cdot, t_0)\|_{\infty}\cr
&\leq e^{-\lambda (t-t_0)}\frac{2C_{0}}{b(1-\theta)}\leq \frac{2C_{0}}{b(1-\theta)} e^{-\alpha (t-t_0)}\quad \forall\, t\ge t_0.
\end{align}
Furthermore, \eqref {Lp-Lq-2} along with the definition of $T$ gives us that
\begin{align}\label{nabla-V-2}
\mu\int_{t_0}^t \|\nabla e^{(t-s)(\Delta-\lambda I)}U(\cdot, s)\|_{\infty}ds&\leq \mu C\int_{t_0}^t (t-s)^{-\frac{1}{2}}e^{-\lambda(t-s)}\|U(\cdot, s)\|_{\infty}ds\cr
&\le \mu B C\lambda^{-\frac{1}{2}}\big(\int_{0}^{\lambda(t-t_0)}\sigma^{-\frac{1}{2}}e^{-(1-\frac{\alpha}{\lambda})\sigma}d\sigma\big)e^{-\alpha(t-t_0)}\cr
&\leq \mu BC\lambda^{-\frac{1}{2}}(1-\frac{\alpha}{\lambda})^{-\frac{1}{2}}\sqrt{\pi}e^{-\alpha(t-t_0)} \quad\,\ \forall\,\ t\in(t_0, T).
\end{align}
Combing \eqref{nabla-V-1} and \eqref{nabla-V-2}, we get that
\begin{equation}\label{nabla-V}
\|\nabla V(\cdot, t)\|_{\infty}\leq \big\{\frac{2C_{0}}{b(1-\theta)} +\mu BC\lambda^{-\frac{1}{2}}(1-\frac{\alpha}{\lambda})^{-\frac{1}{2}}\sqrt{\pi}\big\}e^{-\alpha(t-t_0)} \quad\,\ \forall\,\ t\in(t_0, T).
\end{equation}

By the variation of constant formula again, we have
\begin{align}\label{U-eq-2}
\|U(\cdot, t)\|_{\infty}&\leq \|e^{(t-t_0)(\Delta-a I)}U(\cdot, t_0)\|_{\infty}+\chi\int_{t_0}^t \|e^{(t-s)(\Delta-a I)}\nabla\cdot(u(\cdot, s;u_0,v_0)\nabla V(\cdot, s))\|_{\infty}ds\cr
&\,\, \,\,\, +b\int_{t_0}^t \|e^{(t-s)(\Delta-a I)}U^2(\cdot, s)\|_{\infty}ds \quad \forall\,\ t>t_0.
\end{align}
It follows from  \eqref{eq-3-1} and  \eqref{U-upper} that
\begin{align}\label{U-1}
\|e^{(t-t_0)(\Delta-a I)}U(\cdot, t_0)\|_{\infty}&\leq e^{-a(t-t_0)}\|U(\cdot, t_0)\|_{\infty}\leq e^{-a(t-t_0)} \frac{2C_2\theta}{b(1-\theta)^2}\cr
&\leq e^{-\alpha(t-t_0)} \frac{2C_2\theta}{b(1-\theta)^2}\leq \frac{B}{6} e^{-\alpha(t-t_0)} \quad \forall\,\ t>t_0.
\end{align}
By Lemma \ref{L_Infty bound 2}, \eqref{u-ineq}, \eqref{eq-1-1}, \eqref{eq-3-1}, \eqref{nabla-V}, and $\frac{\chi}{b}=\frac{4\theta}{N\mu}$, we have
\begin{align}
\label{U-2}
&\chi\int_{t_0}^t \|e^{(t-s)(\Delta-a I)}\nabla\cdot(u(\cdot, s;u_0,v_0)\nabla V(\cdot, s))\|_{\infty}ds\cr
&\leq \chi C\int_{t_0}^t e^{-a(t-s)}(t-s)^{-\frac{1}{2}}\frac{2a}{b(1-\theta)}\big\{\frac{2C_{0}}{b(1-\theta)} +\mu BC\lambda^{-\frac{1}{2}}(1-\frac{\alpha}{\lambda})^{-\frac{1}{2}}\sqrt{\pi}\big\}e^{-\alpha(s-t_0)}ds\cr
&\leq\big\{\frac{16\theta aC_{0}}{bN\mu(1-\theta)^2} +\frac{8\theta a B C(\lambda-\alpha)^{-\frac{1}{2}}\sqrt{\pi}}{N(1-\theta)}\big\}(a-\alpha)^{-\frac{1}{2}}\sqrt{\pi}e^{-\alpha(t-t_0)}\cr
&\leq \frac{B}{6}e^{-\alpha(t-t_0)} \quad\,\ \forall\,\ t\in(t_0, T).
\end{align}
By \eqref{eq-1-1}, \eqref{U-upper},  and the definition of $T$, we have
\begin{align}\label{U-3}
b\int_{t_0}^t \|e^{(t-s)(\Delta-a I)}U^2(\cdot, s))\|_{\infty}ds&\leq b\int_{t_0}^t e^{-a(t-s)}\|U(\cdot, s)\|_{\infty}\|U(\cdot, s)\|_{\infty}ds\cr
&\leq b\int_{t_0}^t e^{-a(t-s)}\frac{2C_2\theta}{b(1-\theta)^2} B e^{-\alpha(s-t_0)}ds\cr
&\leq \frac{2C_2\theta}{(1-\theta)^2}\cdot B\cdot \frac{1}{a-\alpha}\cdot e^{-\alpha(t-t_0)}\cr
&\leq \frac{1}{6} B e^{-\alpha(t-t_0)}\quad \forall\,\ t\in(t_0,T).
\end{align}
Combing \eqref{U-1}, \eqref{U-2} and \eqref{U-3}, we can obtain that
$$
\|U(\cdot, t)\|_{\infty}\leq 3\cdot \frac{1}{6} B e^{-\alpha(t-t_0)}=\frac{B}{2}e^{-\alpha(t-t_0)}\quad \forall\,\ t\in(t_0,T)
$$
which together with the continuity of $U$ implies that $T$ cannot be finite. This shows \eqref{T-infty}, and
\eqref{U-exp-decay} then follows.

We now prove that there is $C>0$ such that
\begin{equation}
\label{V-exp-decay}
\|V(\cdot,t)\|_\infty\le C e^{-\alpha t}\quad \forall\, t>0.
\end{equation}
 By variation of constants formula associated with the second equation in \eqref{U-V-eq}, we get that
$$
V(\cdot, t)=e^{t(\Delta-\lambda I)}\big(v_0-\frac{\mu}{\lambda}\frac{a}{b}\big)+\mu\int_{0}^{t}e^{(t-s)(\Delta-\lambda I)}U(\cdot,s)ds \quad \forall\,\ t>0.
$$
By \eqref{U-exp-decay}, we have
\begin{align}\label{V-decay}
\|V(\cdot, t)\|_{\infty}&\leq \|e^{t(\Delta-\lambda I)}\big(v_0-\frac{\mu}{\lambda}\frac{a}{b}\big)\|_{\infty}+\mu\int_{0}^{t}\|e^{(t-s)(\Delta-\lambda I)}U(\cdot,s)\|_{\infty}ds \cr
&\leq e^{-\lambda t}\|v_0-\frac{\mu}{\lambda}\frac{a}{b}\|_{\infty}+C\mu \int_{0}^{t} e^{-\lambda (t-s)}e^{-\alpha s}ds\cr
&= e^{-\lambda t}\|v_0-\frac{\mu}{\lambda}\frac{a}{b}\|_{\infty}+\frac{C\mu}{\lambda-\alpha}(e^{-\alpha t}-e^{-\lambda t}) \quad \forall\,\ t>0.
\end{align}
\eqref{V-exp-decay} then follows, and  \eqref{U-exp-decay} and \eqref{V-exp-decay} establish \eqref{u-v-decay}.
\end{proof}

\end{document}